\theoremstyle{plain}
\newtheorem{lem}{Lemma}[section]
\newtheorem{prop}[lem]{Proposition}
\newtheorem{thm}[lem]{Theorem}
\newtheorem{cor}[lem]{Corollary}
\theoremstyle{definition}
\theoremstyle{remark}
\DeclareMathOperator{\rank}{rank}
\DeclareMathOperator{\sym}{sym}
\DeclareMathOperator{\spn}{span}
\DeclareMathOperator{\mult}{mult}
\DeclareMathOperator{\gen}{gen}
\newcommand{\bmu}{\boldsymbol \mu}
\newcommand{\bdelta}{\boldsymbol \delta}
\newcommand{\bbeta}{\boldsymbol \beta}
\newcommand{\calC}{{\mathcal C}}
\newcommand{\Z}{\mathbb Z}
\newcommand{\Q}{\mathbb Q}
\newcommand{\A}{\mathbb A}
\newcommand{\F}{\mathbb F}
\newcommand{\E}{\mathbb E}
\newcommand{\R}{\mathbb R}
\newcommand{\C}{\mathbb C}
\newcommand{\stufe}{\mathcal N }
\newcommand{\Eis}{\mathcal E}
\newcommand{\h}{\mathcal H}
\newcommand{\K}{\mathcal K}
\newcommand{\M}{\mathcal M}
\newcommand{\Y}{\mathcal Y}
\newcommand{\U}{\mathcal U}
\begin{document}

\title[Hecke operators on Siegel Eisenstein series]
{Hecke eigenvalues and relations for Siegel Eisenstein series of arbitrary degree, level, and character}

\author{Lynne H. Walling}
\address{School of Mathematics, University of Bristol, University Walk, Clifton, Bristol BS8 1TW, United Kingdom;
phone +44 (0)117 331-5245, fax +44 (0)117 928-7978}
\email{l.walling@bristol.ac.uk}



\keywords{Hecke eigenvalues, Eisenstein series, Siegel modular forms}

\begin{abstract} We evaluate the action of Hecke operators 
on Siegel Eisenstein series of 
arbitrary degree, level and character.  For square-free level, we simultaneously diagonalize the 
space with respect to all the Hecke operators,
 computing the eigenvalues explicitly, and obtain a multiplicity-one result.
For arbitrary level, we simultaneously diagonalize the space with respect to the Hecke operators
attached to primes not dividing the level, again computing the eigenvalues explicitly.
\end{abstract}

\maketitle
\def\thefootnote{}
\footnote{2010 {\it Mathematics Subject Classification}: Primary
11F46, 11F11 }
\def\thefootnote{\arabic{footnote}}

\section{Introduction} 

\smallskip


Automorphic forms appear in almost every area of modern number theory; Eisenstein series are fundamental examples
of automorphic forms. 
In the case of classical  elliptic modular forms (i.e. holomorphic automorphic forms
of integral weight),
Eisenstein series are well-understood:  For instance, 
the Fourier expansions of a ``natural" basis of Eisenstein series have long been known; as well, it has long been known
that the space
of Eisenstein series of weight $k$, level $\stufe$ and character $\chi$ has a basis of simultaneous eigenforms for the
Hecke operators $\{T(p):\ p \text{ prime, } p\nmid\stufe\ \}$, and for $\{T(p):\ p \text{ prime } \}$ when $\stufe$ is square-free.
(see, e.g., chapter IV \cite{Ogg}).
The Fourier coefficients of these simultaneous eigenforms are (after appropriate normalization) the Hecke eigenvalues, and are
doubly-twisted divisor functions; that is, the $m$th Fourier coefficient of such a (normalized) form
of weight $k$ is
$$\sum_{d|m}\chi_1(d)\chi_2(m/d) d^{k-1}$$
where $\chi_1,\chi_2$ are Dirichlet characters,
reflecting the fact that the Fourier coefficients of Hecke eigenforms carry number theoretic information.

In the case of Siegel Eisenstein series, our knowledge is much less complete (for instance, we have limited knowledge of Fourier coefficients for arbitrary degree, level, and character).  However we do have analogues of some of the classical results regarding the action of Hecke operators.
By studying the abstract Hecke algebra, Evdokimov
(\cite{E1}, \cite{E2}) and Freitag (\cite {F})  showed that  the space of Siegel modular forms of arbitrary level and character can be diagonalized with respect to the Hecke operators associated to primes not dividing the level.  These  results also show that the subspace of Siegel Eisenstein series is invariant under these Hecke operators.
Further, in \cite{F} Freitag computed some of the eigenvalues of Siegel Eisenstein series under the Hecke operator $T(p)^m$ where $p$ is a prime not dividing the level, and $m$ is a suitable power.
Following his proof of the injectivity of the Hecke operator $T(p)$ when $p$ is a prime exactly dividing the level of a space of Siegel modular forms (\cite{B2}), in \cite{B3}, B\"ocherer applied
powers of $T(p)$ to the level 1 Siegel Eisenstein series, obtaining a basis for the space of Siegel Eisenstein
series of level $p$ and trivial character, and thereby also obtaining Fourier expansions for this basis.
In \cite{Wal}, for $p$ any prime, we applied an explicit set of matrices for $T(p), T_1(p^2), T_2(p^2)$ directly to a basis for the subspace of Siegel Eisenstein series of degree 2, square-free level, and arbitrary character; we then constructed a basis of simultaneous eigenforms and computed all their eigenvalues.
Recently in \cite{Klo}, Klosin used adelic methods to compute the
Hecke eigenvalues (for primes not dividing the level) on the space of hermitian forms on $U(2,2)$.

In the current paper, we extend the techniques of \cite{Wal} to allow arbitrary degree $n$, level $\stufe$, and character $\chi$ modulo $\stufe$.  In \S3, for each $\gamma\in Sp_n(\Z)$, we define a  Eisenstein series with character $\chi$ corresponding to the $\Gamma_0(\stufe)$-orbit of $\Gamma_{\infty}\gamma$.  We identify necessary conditions for one of these series to be nonzero, and in the case that $\stufe$ is square-free, we show that these conditions are also sufficient (Proposition 3.6).  Next we consider square-free level $\stufe$ and arbitrary character $\chi$ modulo $\stufe$.  
We subscript each element of our basis for this space of  Eisenstein series by some $\sigma=(\stufe_0,\ldots,\stufe_n)$ where $\stufe_0\cdots\stufe_n=\stufe$.
Using an explicit set of matrices giving the action of $T(q)$ where $q$ is a prime dividing $\stufe$, we directly evaluate the action of $T(q)$ on each basis element $\E_{\sigma}$, computing precisely the coefficients in the linear combination of 
 Eisenstein series that is equal to $\E_{\sigma}|T(q)$ (Theorem 4.1).  This allows us to show that we can (algorithmically) diagonalize the space of 
 Eisenstein series with respect to $\{T(q):\ q\text{ prime, }q|\stufe\ \}$, 
obtaining a new basis $\{\widetilde\E_{\sigma}\}_{\sigma}$ for the space. With $\sigma=(\stufe_0,\ldots,\stufe_n)$ and $q$ a prime dividing $\stufe_d$, we show that
$$\widetilde\E_{\sigma}|T(q)=\lambda_{\sigma}(q)\widetilde\E_{\sigma}
\text{ with }|\lambda_{\sigma}(q)|=q^{kd-d(d+1)/2}$$
 (Corollary 4.3; note that this recovers a result from \cite{B3} in the case that $\stufe$ is prime and $\chi$ is trivial).  Since we must have $k>n+1$ for absolute convergence of the Eisenstein series, this shows we have ``multiplicity-one"; that is, for $\widetilde\E_{\sigma}\not=\widetilde\E_{\rho}$, there is some prime $q|\stufe$ so that $\lambda_{\sigma}(q)\not=\lambda_{\rho}(q).$
Following this, for $1\le j\le n$, we directly evaluate the action of $T_j(q^2)$ on the original basis elements $\E_{\sigma}$ where we still assume that $q$ is a prime dividing $\stufe$ (Theorem 4.4).  We then compute the the $T_j(q^2)$-eigenvalues for each of the elements in the diagonalized basis (Corollary 4.5).

In \S 5, we consider Eisenstein series of arbitrary level $\stufe$ and arbitrary character $\chi$, and we directly evaluate
the action of $T(p), T_j(p^2)$ for primes
$p\nmid \stufe$ so that we can explicitly construct a basis of simultaneous eigenforms for these Hecke operators.  
To help us diagonalize the space with respect to these operators, we introduce
a group action of $\U_{\stufe}\times\U_{\stufe}$ on the space of Eisenstein series where
$\U_{\stufe}=(\Z/\stufe\Z)^{\times}$ (Proposition 5.1).  Then we use characters $\psi$ on this group to average Eisenstein series relative to this group action;
by orthogonality of characters, this yields a basis $\{\E_{\sigma,\psi}\}$ for the space of Eisenstein series,
where $\sigma$ indexes our natural basis.  In Corollary 5.3 we show that for any prime $p\nmid\stufe$,
$\E_{\sigma,\psi}|T(p)=\lambda_{\sigma,\psi}(p)\E_{\sigma,\psi}$ where
$$\lambda_{\sigma,\psi}(p)=\psi_1(p)\overline\psi_2(p^n)\prod_{i=1}^n(\psi_1\chi(p)p^{k-i}+1);$$
here $\psi(v,w)=\psi_1(v)\psi_2(w).$
In Theorem 5.4 we evaluate the action of $T_j(p^2)$ on the natural basis.  Theorems 5.2 and 5.4 show that the Hecke
operators commute with the group action of $\U_{\stufe}\times\U_{\stufe}$ on the space of Eisenstein series;
we let $R(w)$ be the operator corresponding to the action of the group element $(1,w)$.
Then to obtain more attractive eigenvalues, we introduce operators $T'_j(p^2)$ so that the algebra generated by
$$\{T(p),T'_j(p^2),R(p):\ \text{prime }p\nmid\stufe,\ 1\le j\le n\ \}$$ is the algebra generated by
$\{T(p),T_j(p^2),R(p):\ \text{prime }p\nmid\stufe,\ 1\le j\le n\ \},$
and in Corollary 5.5, we show $\E_{\sigma,\psi}|T'_j(p^2)=\lambda'_{j;\sigma,\psi}(p^2)\E_{\sigma,\psi}$ where
$$\lambda'_{j;\sigma,\psi}(p^2)=\bbeta_p(n,j)p^{(k-n)j+j(j-1)/2}\chi(p^j)\prod_{i=1}^j(\psi_2\chi(p)p^{k-i}+1)$$
(here $\bbeta_p(n,j)$ is the number of $j$-dimensional subspaces of an $n$-dimensional space over $\Z/p\Z$).
When $\stufe$ is square-free, we show $\E_{\sigma,\psi}=0$ unless $\psi_1=\prod_{0<d\le n}\overline\chi_{\stufe_d}^d$
and $\psi_2=\overline\chi_{\stufe_n}^2$ (where $\sigma=(\stufe_0,\ldots,\stufe_n)$), and then with such $\psi$,
$\widetilde\E_{\sigma}|T(p)=\lambda_{\sigma,\psi}(p)\widetilde\E_{\sigma}$ and 
$\widetilde\E_{\sigma}|T'_j(p^2)=\lambda'_{j;\sigma,\psi}(p^2)\widetilde\E_{\sigma}$ (where $\widetilde\E_{\sigma}$ is as in Corollary 4.3).

Note that when $\chi^2=1$, $T'_j(p^2)$ is the operator introduced in \cite{thetaI} and again in \cite{thetaII} so that
$\theta^{(n)}(\gen L)|T'_j(p^2)=\lambda'_j(p^2)\theta^{(n)}(\gen L)$ where $\theta^{(n)}(\gen L)$  is the averaged (``genus")
 theta series attached to the genus of the lattice $L$, which is equipped with a positive definite quadratic form.

As all the arguments herein are valid when considering non-holomorphic Eisenstein series in the variables $\tau$ and $s$
(defined in \S3), the results
extend immediately to incude these forms (with $k$ replaced by $k+s$ in the formulas).

\bigskip
\section{Notation and Hecke operators}
\smallskip

For $n\in\Z_+$, $Sp_n(\Z)$ denotes the group of $2n\times 2n$ integral, symplectic matrices; we often write
these in block form $\begin{pmatrix} A&B\\C&D\end{pmatrix}$ where $A,B,C,D$ are $n\times n$ matrices.  Subgroups of importance to us include
$$\Gamma_{\infty}=\left\{\begin{pmatrix} A&B\\0&D\end{pmatrix}\in Sp_n(\Z)\right\},$$
$$\Gamma_{\infty}^+=\left\{\begin{pmatrix} A&B\\0&D\end{pmatrix}\in Sp_n(\Z):\ \det A=1\ \right\},$$
$$\Gamma(\stufe)=\{\gamma\in Sp_n(\Z):\ \gamma\equiv I\ (\stufe)\ \},$$
$$\Gamma_0(\stufe)=\left\{\begin{pmatrix} A&B\\C&D\end{pmatrix}\in Sp_n(\Z): \ C\equiv 0\ (\stufe)\ \right\};$$
here $\stufe\in\Z_+$.
It is well-known that for $\gamma=\begin{pmatrix} *&*\\M&N\end{pmatrix},\gamma'=\begin{pmatrix} *&*\\M'&N'\end{pmatrix}\in Sp_n(\Z)$, we have
$\gamma'\in\Gamma_{\infty}^+\gamma$ if and only if $(M'\ N')\in SL_n(\Z)(M\ N)$ .
Suppose $\begin{pmatrix} K&L\\M&N\end{pmatrix}\in Sp_n(\Z)$; then $(M\ N)$ is a coprime symmetric pair, meaning that
$M,N$ are integral,
$M\,^tN$ is symmetric, and for every prime $p$, $\rank_p(M\ N)=n$, where $\rank_p$ denotes the rank over $\Z/p\Z$.
On the other hand, given any coprime symmetric pair of $n\times n$ matrices $(M\ N)$, there exists some
$\begin{pmatrix} K&L\\M&N\end{pmatrix}\in Sp_n(\Z)$.  We often write $(M,N)=1$ to denote that a pair of integral matrices $(M\ N)$ is coprime.

Degree $n>1$ Siegel modular forms have as their domain
$$\h_{n}=\{X+iY:\ X,Y\in\R^{n,n}_{\sym},\ Y>0\ \}$$
where $\R^{n,n}_{\sym}$ denotes the set of symmetric $n\times n$ matrices over $\R$,
and $Y>0$ means that the quadratic form represented by $Y$ is positive definite.
For $n,k,\stufe\in\Z_+$ and $\chi$ a Dirichlet character modulo $\stufe$, a
 Siegel modular form of degree $n$, weight $k$, level $\stufe$, character $\chi$ is a holomorphic function
$f:\h_n\to\C$ 
(holomorphic in all variables of $\tau\in\h_n$)
so that for all $\begin{pmatrix} A&B\\C&D\end{pmatrix}\in\Gamma_0(\stufe)$, we have
$$f((A\tau+B)(C\tau+D)^{-1})=\chi(\det D) \det(C\tau+D)^k f(\tau).$$
(Note that this generalises the definition of a classical modular form, except in that case, where $n=1$, we also require
$$\lim_{\tau\to i\infty}(c\tau+d)^{-k}f(\tau)<\infty$$
for all $\begin{pmatrix}a&b\\ c&d\end{pmatrix}\in SL_2(\Z).$)
We use $\M_k^{(n)}(\stufe,\chi)$ to denote the space of all such forms.

To define the Hecke operators, fix a prime $p$.
Set $\Gamma=\Gamma_0(\stufe)$ and take $f\in\M_k^{(n)}(\stufe,\chi)$.
We define
$$f|T(p)=p^{n(k-n-1)/2}\sum_{\gamma}\overline\chi(\gamma)\,f|\delta^{-1}\gamma$$
where $\delta=\begin{pmatrix} pI_n\\&I_n\end{pmatrix}$, $\gamma$ varies over
$$(\delta\Gamma\delta^{-1}\cap\Gamma)\backslash\Gamma,$$
and for $\gamma'=\begin{pmatrix} A&B\\C&D\end{pmatrix},$
$$f(\tau)|\gamma'=(\det \gamma')^{k/2}\,\det(C\tau+D)^{-k}\,f((A\tau+B)(C\tau+D)^{-1}).$$
We define
$$f|T_j(p^2)=p^{j(k-n-1)}\sum_{\gamma}\overline\chi(\gamma)\,f|\delta_j^{-1}\gamma$$
where $\delta_j=\begin{pmatrix} X_j\\&X_j^{-1}\end{pmatrix}$, $X_j=X_j(p)=\begin{pmatrix} pI_j\\&I_{n-j}\end{pmatrix}$,
and $\gamma$ varies over $$(\delta_j\Gamma\delta_j^{-1}\cap\Gamma)\backslash\Gamma.$$

To help us describe a set of matrices giving the action of each Hecke operator, we fix the following notation.
For $r,s\in\Z_{\ge 0}$ so that $r+s\le n$, let
$$X_{r,s}=X_{r,s}(p)=\begin{pmatrix} pI_r\\&I\\&&\frac{1}{p}I_s\end{pmatrix}\ (n\times n),$$
$$\K_{r,s}=\K_{r,s}(p)=X_{r,s}SL_n(\Z)X_{r,s}^{-1}\cap SL_n(\Z);$$
set $X_r=X_{r,0}$, $\K_{r}=\K_{r,0}.$

\begin{prop}\label{proposition 2.1}  Let $p$ be a prime, $f\in\M_k^{(n)}(\stufe,\chi).$
\item{(a)}  
We have
$$
f|T(p)
=p^{n(k-n-1)/2}\sum_{0\le r\le n}\chi(p^{n-r})\sum_{G,Y}
f|\begin{pmatrix} X_{r}^{-1}\\&\frac{1}{p}X_{r}\end{pmatrix}\begin{pmatrix} G^{-1}&Y\,^tG\\&^tG\end{pmatrix}
$$
where, for each $r$, $G$ varies over $SL_n(\Z)/\K_{r}(p)$ and $Y$ varies over
$$\Y_{r}(p)=\left\{\begin{pmatrix} Y_0\\&0\end{pmatrix}\in\Z_{\sym}^{n,n}:\ Y_0 \ r\times r,
\text{ varying modulo }p \right\}.$$
(Here $\Z^{n,n}_{\sym}$ denotes the set of integral, symmetric $n\times n$ matrices.)
\item{(b)}  For $1\le j\le n$,
\begin{align*}
&f|T_j(p^2)\\
&\quad=p^{j(k-n-1)}\sum_{n_0+n_2\le j}\chi(p^{j-n_0+n_2})\sum_{G,Y}
f|\begin{pmatrix} X_{n_0,n_2}^{-1}\\&X_{n_0,n_2}\end{pmatrix}\begin{pmatrix} G^{-1}&Y\,^tG\\&^tG\end{pmatrix}.
\end{align*}
Here, for each pair $n_0,n_2$,
$G=G_1G_2$, where $G_1$ varies over $SL_n(\Z)/\K_{n_0,n_2}(p)$,
$$G_2=\begin{pmatrix} I_{n_0}\\&G'\\&&I_{n_2}\end{pmatrix}$$ with $G'$ varying over
$SL_{n'}(\Z)/\,^t\K'_{j'}(p)$ where $n'=n-n_0-n_2$, $j'=j-n_0-n_2$,
$$\K'_{j'}=\begin{pmatrix} pI_{j'}\\&I\end{pmatrix} SL_{n'}(\Z)\begin{pmatrix}\frac{1}{p}I_{j'}\\&I\end{pmatrix}\cap SL_{n'}(\Z),$$
 and $Y$ varies over $\Y_{n_0,n_2}(p^2)$, the set
of all integral, symmetric $n\times n$ matrices 
$$\begin{pmatrix} Y_0&Y_2&Y_3&0\\^tY_2&Y_1/p&0\\^tY_3&0\\0\end{pmatrix}$$
with $Y_0$ $n_0\times n_0$, varying modulo $p^2$, $Y_1$ $j'\times j'$, varying
modulo $p$ provided $p\nmid\det Y_1$, 
and $Y_2,Y_3$  varying modulo $p$ with  $Y_3$ $n_0\times n_2$.
\end{prop}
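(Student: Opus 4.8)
The plan is to deduce both formulas from an explicit description of the relevant coset spaces, followed by bookkeeping of the character. The first step is the reduction that lets the character factor out. For $\gamma\in\Gamma=\Gamma_0(\stufe)$ the assignment $\gamma\mapsto\Gamma\delta^{-1}\gamma$ descends to a bijection $(\delta\Gamma\delta^{-1}\cap\Gamma)\backslash\Gamma\to\Gamma\backslash\Gamma\delta^{-1}\Gamma$ (and likewise for $\delta_j$): if $\gamma'=h\gamma$ with $h=\delta h_0\delta^{-1}\in\delta\Gamma\delta^{-1}\cap\Gamma$ then $\delta^{-1}\gamma'=h_0\,\delta^{-1}\gamma$ with $h_0\in\Gamma$, and injectivity and surjectivity follow the same way. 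Thus it suffices to produce a complete irredundant set of representatives $\alpha$ for $\Gamma\backslash\Gamma\delta^{-1}\Gamma$ in the stated normal form; then for each such $\alpha$ there is an $\eta\in\Gamma$ with $\delta^{-1}\gamma=\eta\alpha$, so $f|\delta^{-1}\gamma=\chi(\det D_\eta)\,f|\alpha$, and the scalars $\chi(\det D_\eta)\overline\chi(\det D_\gamma)$ are what must be shown to equal $\chi(p^{\,n-r})$ (resp.\ $\chi(p^{\,j-n_0+n_2})$).

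To find the representatives I would describe the subgroup directly. A short computation gives $\delta\Gamma\delta^{-1}\cap\Gamma=\{\,g\in\Gamma_0(\stufe):B_g\equiv0\ (p)\,\}$ (the upper-right block killed mod $p$), so the coset space $(\delta\Gamma\delta^{-1}\cap\Gamma)\backslash\Gamma$ is governed entirely by reduction modulo $p$ and, by the Bruhat decomposition of $Sp_n$ relative to the Siegel parabolic, is partitioned into cells indexed by the elementary-divisor invariant $r$. Within the $r$-th cell the residual freedom is exactly a Grassmannian-type factor $G\in SL_n(\Z)/\K_r(p)$ (note $\K_r=\{g\in SL_n(\Z):g_{12}\equiv0\ (p)\}$, whose index is the Gaussian binomial $\bbeta_p(n,r)$) together with a symmetric unipotent factor $Y\in\Y_r(p)$; multiplying $\delta^{-1}$ against a cell representative and normalizing to block-upper-triangular form produces precisely $\begin{pmatrix}X_r^{-1}\\&\frac1p X_r\end{pmatrix}\begin{pmatrix}G^{-1}&Y\,^tG\\&{}^tG\end{pmatrix}$. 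Irredundancy is checked by comparing two such matrices under left $\Gamma$-multiplication: equality of cosets forces equal $r$ (elementary divisors are invariant), then equal $G$ modulo $\K_r$ and equal $Y$ modulo $p$. For the character one computes $\det D_\eta$ from the triangularization and finds it congruent to a unit times $p^{\,n-r}$ modulo $\stufe$; since $\chi$ is a character modulo $\stufe$ this yields the factor $\chi(p^{\,n-r})$, which in particular vanishes for $r<n$ when $p\mid\stufe$, so the formula is uniform in whether or not $p$ divides the level.

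I expect the main obstacle to be part (b). The operator $T_j(p^2)$ has similitude scale $p^2$, so the diagonal invariant now records three kinds of elementary divisors ($p^2$, $p$, and $1$), producing the two indices $n_0,n_2$ and forcing a two-step reduction: the Grassmannian factor splits as $G=G_1G_2$ with $G_1\in SL_n(\Z)/\K_{n_0,n_2}(p)$ and an inner $G'\in SL_{n'}(\Z)/\,^t\K'_{j'}(p)$ acting on the middle block. Correspondingly the symmetric parameter acquires the displayed block shape, and pinning down its exact range --- $Y_0$ modulo $p^2$, $Y_2,Y_3$ modulo $p$, and $Y_1$ modulo $p$ restricted to the locus $p\nmid\det Y_1$ --- is the delicate combinatorial heart: the determinant condition on $Y_1$ is what separates the contribution of the scale-$p$ divisors from that of the scale-$p^2$ and scale-$1$ divisors, and verifying both completeness and irredundancy requires tracking the nested $SL_{n'}$-cosets against the symmetric-block reductions simultaneously. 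Once these ranges and the corresponding character value $\chi(p^{\,j-n_0+n_2})$ are established, part (b) follows by the same triangularize-and-count scheme as part (a).
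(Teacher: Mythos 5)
Your reduction at the start of part (a) is fine: the bijection between $(\delta\Gamma\delta^{-1}\cap\Gamma)\backslash\Gamma$ and $\Gamma\backslash\Gamma\delta^{-1}\Gamma$, the computation $\delta\Gamma\delta^{-1}\cap\Gamma=\{g\in\Gamma_0(\stufe):\ B_g\equiv0\ (p)\}$, and the description of $\K_r(p)$ as the matrices with upper-right block $\equiv0\ (p)$ are all correct. But the core of your part (a) argument rests on two claims that fail. First, you separate the cells by saying ``equality of cosets forces equal $r$ (elementary divisors are invariant).'' This invariant does not see $r$: for every $r$, $G$, $Y$ the integral matrix $p\begin{pmatrix} X_r^{-1}\\&\frac1p X_r\end{pmatrix}\begin{pmatrix} G^{-1}&Y\,^tG\\&^tG\end{pmatrix}$ is $SL_{2n}(\Z)$-equivalent to $\diag(I_r,pI_{n-r},pI_r,I_{n-r})$, so its elementary divisors are $(1^n,p^n)$ regardless of $r$ (all these representatives lie in a single double coset, which is exactly why $T(p)$ is one operator). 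What actually distinguishes the cells is a finer invariant; in the paper it is the lattice $\Omega=\Lambda GX_r$ with $p\Lambda\subseteq\Omega\subseteq\Lambda$ and $[\Lambda:\Omega]=p^r$ supplied by Lemma 6.2, and producing and exploiting such an invariant is precisely the work your sketch omits. Second, your character bookkeeping is inconsistent when $p\mid\stufe$: you assert $\det D_\eta\equiv(\text{unit})\cdot p^{\,n-r}\ (\stufe)$, but $\eta\in\Gamma_0(\stufe)$ forces $\det D_\eta$ to be a unit modulo $\stufe$, which is impossible for $p\mid\stufe$ and $r<n$. What is true is that for $p\mid\stufe$ the cells with $r<n$ are \emph{empty} (the coset space then has only $p^{n(n+1)/2}$ elements, matching the Remark after the proposition), and the factor $\chi(p^{\,n-r})=0$ is just a device for writing both cases in one formula; since your argument attaches a nonzero character value to every coset that exists, it cannot produce the vanishing and hence does not prove the statement for $p\mid\stufe$.

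The more serious gap is part (b), where you have not given an argument at all. The determination of the nested cosets $G=G_1G_2$ with $G_1\in SL_n(\Z)/\K_{n_0,n_2}(p)$ and $G'\in SL_{n'}(\Z)/\,^t\K'_{j'}(p)$, of the exact ranges of $Y_0,Y_1,Y_2,Y_3$, and in particular of the condition $p\nmid\det Y_1$, together with completeness and irredundancy, \emph{is} the content of the proposition; calling it ``the delicate combinatorial heart'' and saying that once it is established the result follows by the scheme of part (a) proves nothing. Note that the paper itself does not rederive these representatives from scratch: its proof consists of dictionaries (Lemmas 6.2 and 6.3) between the cosets $SL_n(\Z)/\K_r(p)$, $SL_n(\Z)/\K_{d,r}(p)$ and sublattices of a reference lattice $\Lambda$ (recording the multiplicities of the invariant factors $1/p$, $1$, $p$, which is where $n_0$ and $n_2$ come from), after which it quotes the explicit decompositions of \cite{HW} (Propositions 2.1 and 3.1 and Theorems 4.1 and 6.1 there), where the conditions you defer --- including the determinant condition on $Y_1$ --- are actually proven. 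A self-contained proof along your lines is possible in principle, but it would amount to reproducing that work, and your proposal does not contain it.
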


\begin{proof}  Fix $\Lambda=\Z x_1\oplus\cdots\oplus\Z x_n$ (a reference lattice).  

By Lemma 6.2, as $G$ varies over $SL_n(\Z)/\K_{r}$, 
$\Omega=\Lambda GX_{r}$ varies over all lattices $\Omega$,
$p\Lambda\subseteq\Omega\subseteq\Lambda$ with $[\Lambda:\Omega]=p^{r}$.
Thus by Proposition 3.1 \cite{HW} and (the proof of) Theorem 6.1 in \cite{HW}, claim (1) of the proposition follows.

For $\Omega$ another lattice on $\Q\Lambda$, let
$\mult_{\{\Lambda:\Omega\}}(x)$ be the multiplicity of the value of $x$ among the invariant factors 
$\{\Lambda:\Omega\}$.  By Lemma 6.3, as $G_1$ varies over $SL_n(\Z)/\K_{n_0,n_2}(p)$,
$\Omega=\Lambda G_1X_{n_0,n_2}$ varies over all lattices $\Omega$, 
$p\Lambda\subseteq\Omega\subseteq\frac{1}{p}\Lambda$, with 
$\mult_{\{\Lambda:\Omega\}}(1/p)=n_2$,
$\mult_{\{\Lambda:\Omega\}}(p)=n_0$.
Then with $\Omega=\Omega_0\oplus\Omega_1\oplus\Omega_2$,
$\Lambda=\frac{1}{p}\Omega\oplus\Omega_1\oplus p\Omega_2$, as $G'$ varies over
$SL_{n'}(\Z)/\,^t\K'_{j'}(p)$ 
$$\Omega_1 G'\begin{pmatrix} I_{j'}\\&0\end{pmatrix} \text{ modulo }p$$
varies over all dimension $j'$ subspaces of $\Omega_1/p\Omega_1$.
Thus by Proposition 2.1 \cite{HW} and (the proofs of) Theorems 4.1 and 6.1 in \cite{HW}, claim (2) of the proposition follows.
\end{proof}

\noindent{\bf Remark.}
For $\stufe'\in\Z_+$ so that $p\nmid\stufe'$, by Lemma 6.1 we can choose $G$ in the above proposition so that
$G\equiv I\ (\stufe')$, and since
$\stufe'Y$ will vary over a set of representatives for $\Y_r(p)$ or $\Y_{n_0,n_2}(p^2)$ as $Y$ does,
we can choose $Y$ in the above proposition so that $Y\equiv 0\ (\stufe').$
Also, when $p|\stufe$, we have
$$f|T(p)=p^{n(k-n-1)/2}\sum_Y f|\begin{pmatrix} \frac{1}{p}I_n&\frac{1}{p}Y\\&I_n\end{pmatrix}$$
where $Y$ varies over $\Y_n(p)$, and
$$f|T_j(p^2)=p^{j(k-n-1)}\sum_{G,Y}f|\begin{pmatrix} X_j^{-1}\\&X_j\end{pmatrix}
\begin{pmatrix} G^{-1}&Y\,^tG\\&^tG\end{pmatrix}$$
where $G$ varies over $SL_n(\Z)/\K_{j}(p)$ and $Y$ varies over $\Y_{j,0}(p^2)$.
\smallskip

To describe the Hecke eigenvalues, we make use of the following elementary functions:  Fix $m\ge 0$.  With $r> 0$,
$$\bdelta(m,r)=\bdelta_p(m,r)=\prod_{i=0}^{r-1}(p^{m-i}+1),$$
$$\bmu(m,r)=\bmu_p(m,r)=\prod_{i=0}^{r-1}(p^{m-i}-1),$$
$$\bbeta(m,r)=\bbeta_p(m,r)=\bmu(m,r)/\bmu(r,r)$$
(note that $\bbeta_p(m,r)$ is the number of $r$-dimensional subspaces of an $m$-dimensional space over $\Z/p\Z$).
Take $\bdelta(m,0)=\bmu(m,0)=1.$
For $r<0$, we take $\bbeta(m,r)=0$.
As well, we will use the following functions:
With $p$ prime, $t\in\Z_+$, and $\F=\Z/p\Z$, let $\sym_p(t)$ be the number of invertible matrices in $\F^{t,t}_{\sym}$,
the set of symmetric $t\times t$ matrices over $\F$.  
More generally, let $\chi$ be a character of square-free
modulus $\stufe$, with $p|\stufe$; set
$$\sym_p^{\chi}(t)=\sum_{U\in\F^{t,t}_{\sym}}\chi_p(\det U),$$
and $$\sym_p^{\chi}(t-s,s)=\sum_U\chi_p\left(\det\begin{pmatrix} U_1&U_2\\^tU_2&0\end{pmatrix}\right)$$
where $U=\begin{pmatrix} U_1&U_2\\^tU_2&0\end{pmatrix}\in \F^{t,t}_{\sym}$ with $U_1$ of size $(t-s)\times(t-s)$
(so $\sym_p^{\chi}(t,0)=\sym_p^{\chi}(t)$).  Note that as $U$ varies over invertible matrices in $\F^{t,t}_{\sym}$,
so does $\overline U$ (where $U\overline U=I$ in $\F^{t,t}$), we have $\sym_p^{\overline\chi}(t)=\sym_p^{\chi}(t)$;
similarly, $\sym_p^{\overline\chi}(t-s,s)=\sym_p^{\chi}(t-s,s)$.
Also, take $\sym_p^{\chi}(0)=\sum_p^{\chi}(0,0)=1.$
Although we will not use the precise values of these functions
in this work, one can use the theory of quadratic forms over finitie fields to show that
for $p$ odd and $\varepsilon=\left(\frac{-1}{p}\right)$,
$$\sym_p^{\chi}(b,c)=\begin{cases}
\frac{p^{m^2+m-c}\bmu(b,b)}{\bmu\bdelta(m-c,m-c)}&\text{if $b+c=2m$ and $\chi_p=1$,}\\
\frac{\varepsilon^m p^{m^2}\bmu(b,b)}{\bmu\bdelta(m-c,m-c)}&\text{if $b+c=2m$, $\chi_p^2=1$, and $\chi_p\not=1$,}\\
\frac{p^{m^2+m}\bmu(b,b)}{\bmu\bdelta(m-c,m-c)}&\text{if $b+c=2m+1$ and $\chi_p=1$,}\\
0&\text{otherwise,}
\end{cases}$$
and for $p=2$,
$$\sym_2^{\chi}(b,c)=
\begin{cases} \frac{2^{m(m+1)}\bmu(b,b)}{\bmu\bdelta(m-c,m-c)}&\text{if $b+c=2m+1$,}\\
\frac{2^{m(m+1)}\bmu(b,b)}{\bmu\bdelta(m-1,m-1)}
\left(\frac{\bmu(2m-1,2c)}{\bmu\bdelta(m-1,c)}+\bmu\bdelta(m,c)\right)&\text{if $b+c=2m$.}
\end{cases}$$
(Here $\bmu=\bmu_p$, $\bdelta=\bdelta_p$.)

For  $p$ prime, $M\in \Z^{n,m}$, we write $\rank_pM$ to denote the rank of $M$ over $\Z/p\Z$;
we will also refer to this rank as the $p$-rank of $M$.

Recall that for
$\chi$ a character modulo $\stufe$ with $\stufe=\stufe'\stufe''$ so that $(\stufe',\stufe'')=1$, we know that $\chi$ factors
uniquely as $\chi_{\stufe'}\chi_{\stufe''}$
where $\chi_{\stufe'}$ is a character modulo $\stufe'$ and $\chi_{\stufe''}$ is a character modulo $\stufe''$.

In what follows, we will sometimes use the matrices
$G_{\pm}=\begin{pmatrix} -1\\&I_{n-1}\end{pmatrix}$ and $\gamma_{\pm}=\begin{pmatrix} G_{\pm}\\&G_{\pm}\end{pmatrix}$

\bigskip
\section {Defining Siegel Eisenstein series}
\smallskip

Fix $k,n,\stufe\in\Z_+$, $\chi$ a character modulo $\stufe$.  
To define Eisenstein series for $\Gamma_0(\stufe)$ with $k$ even, one can begin by defining
a $\Gamma(\stufe)$-Eisenstein series
$$\sum_{\delta^*}1(\tau)|\delta^* \text{ where }1(\tau)|\begin{pmatrix} A&B\\C&D\end{pmatrix}=\det(C\tau+D)^{-k}$$
and $\delta^*$ varies so that $\Gamma_{\infty}\Gamma(\stufe)=\cup_{\delta^*}\Gamma_{\infty}\delta^*$
(disjoint); then for $\gamma\in Sp_n(\Z)$, one can consider
$$\sum_{\delta^*,\delta}\overline\chi(\delta)\,1(\tau)|\delta^*\gamma\delta$$
where $\delta$ varies so that $\Gamma_{\infty}\gamma\Gamma_0(\stufe)=\cup_{\delta}\Gamma_{\infty}\Gamma(\stufe)\gamma\delta$
(disjoint).  
However, when $k$ is odd, these sums are not well-defined, since with 
$\gamma_{\pm}$ as defined in \S 2,
 we have $\gamma_{\pm}\in\Gamma_{\infty}$
and $1(\tau)|\gamma_{\pm}\delta^*=(-1)^k 1(\tau)|\delta^*$ for any $\delta^*\in\Gamma(\stufe)$.
Further, for $k$ even or odd, the latter sum is not well-defined unless $\chi$ is trivial on any matrix in $\Gamma_0(\stufe)$
that stablises $\Gamma_{\infty}\Gamma(\stufe)\gamma.$
Thus we proceed as follows.

Let $\delta^*\in\Gamma(\stufe)$ vary so that
$$\Gamma_{\infty}^+ \Gamma(\stufe)=\cup_{\delta^*}\Gamma_{\infty}^+\delta^*\ \text{(disjoint)},$$
and set $$\E^*(\tau)=\sum_{\delta^*} 1(\tau)|\delta^*.$$
Since $1(\tau)|\delta\delta^*=1(\tau)|\delta^*$ for $\delta\in\Gamma_{\infty}^+$,
$\E^*$ is well-defined. 
Further, provided $k>n+1$, $\E^*(\tau)$ converges absolutely uniformly on subsets
$\{\tau\in\h_n:\ \Im\tau\ge Y\ \}$ for any $Y\in\R^{n,n}_{\sym}$ with $Y>0$, and so $\E^*$ is analytic
(in all variables of $\tau$).
So suppose $k>n+1$.
Now take $\beta\in\Gamma_0(\stufe)$ so that
$$\Gamma_0(\stufe)=\cup_{\beta}\Gamma_{\infty} \Gamma(\stufe)\beta\ \text{(disjoint)},$$
and for $\gamma\in Sp_n(\Z)$, set
$$\E'_{\gamma}=\sum_{\beta} \overline\chi(\beta)\,\E^*|\gamma\beta\ +\ \sum_{\beta}\overline\chi(\gamma_{\pm}\beta)\,\E^*|\gamma_{\pm}\gamma\beta$$
(where $\gamma_{\pm}$ is as defined in \S2).
Note that 
$$\Gamma_{\infty}\gamma\Gamma_0(\stufe)=\cup_{\beta}\left(\Gamma_{\infty}^+\Gamma(\stufe)\gamma\beta\cup
\Gamma_{\infty}^+\Gamma(\stufe)\gamma_{\pm}\gamma\beta\right).$$
Let 
$$\Gamma_{\gamma}^+=\{\delta'\in\Gamma_0(\stufe):\ \Gamma_{\infty}^+\Gamma(\stufe)\gamma\delta'=
\Gamma_{\infty}^+\Gamma(\stufe)\gamma\ \},$$
the subgroup of $\Gamma_0(\stufe)$ that stabilizes $\Gamma_{\infty}^+\Gamma(\stufe)\gamma.$
Thus with $\delta$ varying over $\Gamma^+_{\gamma}\backslash\Gamma_0(\stufe)$, $\delta'$
over $\Gamma(\stufe)\backslash\Gamma^+_{\gamma},$ and noting that
$\E^*|\gamma_{\pm}=(-1)^k\E^*,$ we find
$$\E'_{\gamma}=\left(1+\chi(-1)(-1)^k\right)\sum_{\delta',\delta}\overline\chi(\delta'\delta)\,\E^*|\gamma\delta'\delta.$$
Since $\delta'\in\Gamma^+_{\gamma}$, we have $\gamma\delta'\gamma^{-1}\in\Gamma^+_{\infty}\Gamma(\stufe)$,
so $\E^*|\gamma\delta'=\E^*|\gamma.$  Hence
$$\E'_{\gamma}=(1+\chi(-1)(-1)^k)\sum_{\delta'}\overline\chi(\delta')\sum_{\delta}\overline\chi(\delta)\E^*|\gamma\delta.$$
Thus $\E'_{\gamma}=0$ if $\chi(-1)\not=(-1)^k$, or if $\chi$ is not trivial on $\Gamma^+_{\gamma}.$
Also note that when $\stufe\le 2$, we have $\gamma_{\pm}\in\Gamma(\stufe)$ and hence
$\E^*=\E^*|\gamma_{\pm}=(-1)^k\E^*$; so $\E'_{\gamma}=0$ if $\stufe\le 2$ and $k$ is odd.

Suppose 
$\stufe>2$ or $k$ is even; then
\begin{align*}
\lim_{\tau\to i\infty I}\E^*(\tau)
&=\#\left\{\delta^*\in \Gamma_{\infty}^+\backslash \Gamma_{\infty}^+\Gamma(\stufe):\ \delta^*\in\Gamma_{\infty}\ \right\}\\
&={\begin{cases} 2&\text{if $\stufe\le 2$},\\ 1&\text{if $\stufe>2$.}\end{cases}}
\end{align*}

Set
$$\E_{\gamma}=\frac{1}{2[\Gamma_{\gamma}^+:\Gamma(\stufe)]}\E_{\gamma}'.$$
Suppose $\chi(-1)=(-1)^k$, $\chi$ is trivial on $\Gamma_{\gamma}^+$, and suppose still that either $\stufe>2$ or $k$ is even;
we show that $\E_{\gamma}\not=0$.
We have
$$\E_{\gamma}(\tau)=\sum_{\delta^*,\delta}\overline\chi(\delta) 1(\tau)|\delta^*\gamma\delta$$
where $\delta^*$ varies over $\Gamma_{\infty}^+\backslash \Gamma_{\infty}^+\Gamma(\stufe)$ and
$\delta$ varies over $\Gamma_{\gamma}^+\backslash\Gamma_0(\stufe).$
We have $\delta^*\gamma\delta\gamma^{-1}\in\Gamma_{\infty}$ only if 
$\Gamma_{\infty}\Gamma(\stufe)\gamma\delta=\Gamma_{\infty}\Gamma(\stufe)\gamma,$ 
and since $\Gamma_{\infty}=\Gamma_{\infty}^+\cup\gamma_{\pm}\Gamma_{\infty}^+,$ we have
$\delta^*\gamma\delta\gamma^{-1}\in\Gamma_{\infty}$ only if $\delta\in\Gamma_{\gamma}^+$ or 
$\delta\in\Gamma_{\gamma}^+\gamma^{-1}\gamma_{\pm}\gamma$.
If $\delta\in\Gamma_{\gamma}^+$ then $\E^*|\gamma\delta\gamma^{-1}=\E^*$
and by assumption $\chi(\delta)=1$.
If $\delta=\beta\gamma^{-1}\gamma_{\pm}\gamma^{-1}$ for some $\beta\in\Gamma_{\gamma}^+$,
then with our assumptions,
$$\overline \chi(\delta)\E^*|\gamma\delta\gamma^{-1}=\overline\chi(\gamma_{\pm})\E^*|\gamma_{\pm}=\E^*.$$
Thus 
$$\lim_{\tau\to i\infty I}\E_{\gamma}(\tau)|\gamma^{-1}=
\#\{\delta^*,\delta:\ \delta^*\gamma\delta\gamma^{-1}\in\Gamma_{\infty}\ \}$$
(where $\delta^*$ varies over $\Gamma_{\infty}^+\backslash\Gamma_{\infty}^+\Gamma(\stufe)$,
$\delta$ varies over $\Gamma_{\gamma}^+\backslash\Gamma_0(\stufe)$), and this number is at least 1.
Hence $\E_{\gamma}\not=0$.
Noting that $\E'_{\gamma_{\pm}\gamma}=(-1)^k\E'_{\gamma}$, as $\gamma_{\sigma}$
varies over a set of representatives for 
$\Gamma_{\infty}\backslash Sp_n(\Z)/\Gamma_0(\stufe)$, the nonzero $\E'_{\gamma_{\sigma}}$ are
linearly independent.

Thus we have the following.

\begin{prop}\label{proposition 3.1} For $\gamma\in Sp_n(\Z)$,  
$\E_{\gamma}$ be as defined above.
\item{(a)}  We have $\E_{\gamma}\not=0$ if and only if
(1) $\chi(-1)=(-1)^k$, (2) $\chi$ is trivial on $\Gamma_{\gamma}^+$, and (3) either $\stufe>2$ or $k$ is even.
\item{(b)}  When $\E_{\gamma}\not=0$, we have $\E_{\gamma}(\tau)=\sum_{\delta}\overline\chi(\delta)\,1(\tau)|\gamma\delta$
where $\delta\in\Gamma_0(\stufe)$ varies so that 
$\Gamma_{\infty}^+\gamma\Gamma_0(\stufe)=
\cup_{\delta}\Gamma_{\infty}^+\gamma\delta$ (disjoint);
equivalently, with $\gamma=\begin{pmatrix} A&B\\C&D\end{pmatrix}$,
$$\E_{\gamma}(\tau)=\sum_{(M\ N)} \overline\chi(M,N)\det(M\tau+N)^{-k}$$
where $(M\ N)$ are coprime symmetric pairs varying so that
$$SL_n(\Z)(C\ D)\Gamma_0(\stufe)=\cup_{(M\ N)}SL_n(\Z)(M\ N) \text{ (disjoint)},$$
and
$\chi(M,N)=\chi(\delta)$ for $\delta\in\Gamma_0(\stufe)$ so that $(M\ N)\in SL_n(Z)(C\ D)\delta$.
\item{(c)} With $\gamma_{\sigma}$ varying over a set of
representatives for $\Gamma_{\infty}\backslash Sp_n(\Z)/\Gamma_0(\stufe)$, the non-zero $\E_{\gamma_{\sigma}}$ form 
a basis for $\Eis_k^{(n)}(\stufe,\chi),$ the space of Eisenstein series of degree $n$, weight $k$, level $\stufe$, and character $\chi$.
\end{prop}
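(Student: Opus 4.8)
The plan is to treat part (a) as essentially already established by the computation preceding the statement, and to spend the real effort on the explicit formula in (b) and the basis claim in (c). For (a), the factorization
$$\E'_{\gamma}=(1+\chi(-1)(-1)^k)\sum_{\delta'}\overline\chi(\delta')\sum_{\delta}\overline\chi(\delta)\,\E^*|\gamma\delta$$
derived above already forces $\E_{\gamma}=0$ when $\chi(-1)\ne(-1)^k$ (the scalar factor vanishes), when $\chi$ is nontrivial on $\Gamma_{\gamma}^+$ (the inner sum $\sum_{\delta'}\overline\chi(\delta')$ over $\Gamma(\stufe)\backslash\Gamma_{\gamma}^+$ vanishes by orthogonality of characters), and when $\stufe\le 2$ with $k$ odd (since then $\gamma_{\pm}\in\Gamma(\stufe)$ gives $\E^*=(-1)^k\E^*$). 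Conversely, under all three conditions the constant-term evaluation
$$\lim_{\tau\to i\infty I}\E_{\gamma}(\tau)|\gamma^{-1}=\#\{\delta^*,\delta:\ \delta^*\gamma\delta\gamma^{-1}\in\Gamma_{\infty}\ \}\ge 1$$
gives $\E_{\gamma}\ne0$. So I would simply assemble these two observations.

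For (b) I start from the recorded expression $\E_{\gamma}=\sum_{\delta^*,\delta}\overline\chi(\delta)\,1|\delta^*\gamma\delta$, with $\delta^*$ over $\Gamma_{\infty}^+\backslash\Gamma_{\infty}^+\Gamma(\stufe)$ and $\delta$ over $\Gamma_{\gamma}^+\backslash\Gamma_0(\stufe)$. The key point is that $\Gamma(\stufe)$ is normal in $Sp_n(\Z)$, so $\gamma^{-1}\delta^*\gamma\in\Gamma(\stufe)\subseteq\Gamma_0(\stufe)$ and hence $\delta^*\gamma\delta=\gamma\delta''$ with $\delta''=(\gamma^{-1}\delta^*\gamma)\delta\in\Gamma_0(\stufe)$; since $\chi$ is trivial on $\Gamma(\stufe)$ we get $\overline\chi(\delta'')=\overline\chi(\delta)$, so each summand becomes $\overline\chi(\delta'')\,1|\gamma\delta''$. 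An index count—using that $\gamma^{-1}\Gamma_{\infty}^+\gamma\cap\Gamma_0(\stufe)\subseteq\Gamma_{\gamma}^+$ with $[\Gamma_{\gamma}^+:\gamma^{-1}\Gamma_{\infty}^+\gamma\cap\Gamma_0(\stufe)]=[\Gamma_{\infty}^+\Gamma(\stufe):\Gamma_{\infty}^+]$, via the map $\delta'\mapsto\gamma\delta'\gamma^{-1}\bmod\Gamma_{\infty}^+$—shows that as $(\delta^*,\delta)$ range over their coset sets, $\delta''$ ranges exactly once over the cosets $\delta$ with $\Gamma_{\infty}^+\gamma\Gamma_0(\stufe)=\cup_{\delta}\Gamma_{\infty}^+\gamma\delta$, yielding the first displayed formula. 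The second formula then follows by passing to bottom rows: with $\gamma=\begin{pmatrix}A&B\\C&D\end{pmatrix}$ one has $1(\tau)|\gamma\delta=\det(M\tau+N)^{-k}$ for $(M\ N)$ the bottom row of $\gamma\delta$, and by the \S2 fact that $\gamma'\in\Gamma_{\infty}^+\gamma$ iff their bottom rows agree up to left $SL_n(\Z)$-multiplication, the cosets $\Gamma_{\infty}^+\gamma\delta$ correspond bijectively to the orbits $SL_n(\Z)(M\ N)$ in $SL_n(\Z)(C\ D)\Gamma_0(\stufe)$, the weight transferring to $\overline\chi(M,N)$ by definition.

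For (c), linear independence of the nonzero $\E_{\gamma_{\sigma}}$ was already noted (via $\E'_{\gamma_{\pm}\gamma}=(-1)^k\E'_{\gamma}$); I would make it precise by evaluating $\lim_{\tau\to i\infty I}\E_{\gamma_{\sigma}}|\gamma_{\rho}^{-1}$, which is nonzero only when $\gamma_{\sigma},\gamma_{\rho}$ represent the same double coset, giving a diagonal (hence nonsingular) pairing. For spanning I would use that $\E_{\gamma}$ depends, up to a nonzero scalar (the sign $(-1)^k$ absorbed by $\gamma_{\pm}$ under left $\Gamma_{\infty}$-multiplication and a character twist under right $\Gamma_0(\stufe)$-multiplication), only on the double coset $\Gamma_{\infty}\gamma\Gamma_0(\stufe)$; thus every cusp-attached Eisenstein series is proportional to one of the $\E_{\gamma_{\sigma}}$, and since $\Eis_k^{(n)}(\stufe,\chi)$ is spanned by these cusp-attached series, the nonzero $\E_{\gamma_{\sigma}}$ both span and are independent, hence form a basis.

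The main obstacle is the bookkeeping in (b): confirming that the passage from the double sum over $(\delta^*,\delta)$ to the single sum over $\delta''$ is a genuine bijection with matching character weights. This rests on the normality of $\Gamma(\stufe)$ together with the index identity above, and one must verify carefully that no summand is over- or under-counted and that $\overline\chi$ is constant on the relevant $\Gamma(\stufe)$-orbits. The subsequent translation to coprime symmetric pairs, and the verification in (c) that the spanning set is indexed precisely by $\Gamma_{\infty}\backslash Sp_n(\Z)/\Gamma_0(\stufe)$, are then routine given the \S2 correspondence.
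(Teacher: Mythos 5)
Your proposal is correct and follows essentially the same route as the paper: the paper's own justification of Proposition 3.1 is exactly the discussion preceding it (the factorization of $\E'_{\gamma}$ giving necessity of the three conditions, and the constant-term limit $\lim_{\tau\to i\infty I}\E_{\gamma}(\tau)|\gamma^{-1}\ge 1$ giving sufficiency and, cusp by cusp, linear independence), which you assemble and cite. The details you add --- the normality-of-$\Gamma(\stufe)$ argument with the index count turning the double sum over $(\delta^*,\delta)$ into the single sum over cosets $\Gamma_{\infty}^+\gamma\delta$ in (b), and the diagonal cusp-limit pairing plus the double-coset proportionality (via Proposition 3.2 and $\E'_{\gamma_{\pm}\gamma}=(-1)^k\E'_{\gamma}$) in (c) --- are precisely the bookkeeping the paper leaves implicit, and you carry them out correctly.
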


\smallskip\noindent
{\bf Remarks.}  
\begin{enumerate}
\item 
Having fixed representatives $\{\gamma_{\sigma}\}$ for $\Gamma_{\infty}\backslash Sp_n(\Z)/\Gamma_0(\stufe)$,
we consider $\{\E_{\gamma_{\sigma}}\}$ to be a ``natural" basis for $\Eis^{(n)}_k(\stufe,\chi).$
\item 
For $s\in\C$ with $k+\Re s>n+1$, we can define a non-holomorphic Eisenstein series by replacing
$\det(M\tau+N)^{-k}$ by $$\det(M\tau+N)^{-k}|\det(M\tau+N)|^{-s}.$$
Then all the arguments and results herein are trivially modified to extend to these non-holomorphic forms.

\end{enumerate}

\smallskip

The next three propositions describe some useful relations when working with Eisenstein series; then for $\stufe$
square-free, we describe a convenient set of representatives for $\Gamma_{\infty}\backslash Sp_n(\Z)/\Gamma_0(\stufe)$
and how to evaluate $\chi(M,N).$

\begin{prop}\label{proposition 3.2}  Suppose $\gamma,\gamma'\in Sp_n(\Z)$ and $\delta\in\Gamma_0(\stufe)$ so that
$\Gamma_{\infty}^+\Gamma(\stufe)\gamma'=\Gamma_{\infty}^+\Gamma(\stufe)\gamma\delta.$  Then
$\E_{\gamma'}=\chi(\delta)\E_{\gamma}.$
\end{prop}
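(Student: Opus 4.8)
The plan is to reduce everything to the single-sum formula of Proposition~\ref{proposition 3.1}(b) and then change the variable of summation. By hypothesis $\gamma'=\eta\gamma\delta$ for some $\eta\in\Gamma_{\infty}^+\Gamma(\stufe)$, and since $\Gamma(\stufe)$ is normal in $Sp_n(\Z)$ the set $\Gamma_{\infty}^+\Gamma(\stufe)$ is a subgroup; I would write $\eta=\delta_0\epsilon$ with $\delta_0\in\Gamma_{\infty}^+$ and $\epsilon\in\Gamma(\stufe)$. First I would dispose of the degenerate case. A one-line computation shows $\Gamma_{\gamma'}^+=\delta^{-1}\Gamma_{\gamma}^+\delta$: indeed $\delta''\in\Gamma_{\gamma'}^+$ iff $\Gamma_{\infty}^+\Gamma(\stufe)\gamma\delta\delta''\delta^{-1}=\Gamma_{\infty}^+\Gamma(\stufe)\gamma$, i.e.\ iff $\delta\delta''\delta^{-1}\in\Gamma_{\gamma}^+$. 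Conditions (1) and (3) of Proposition~\ref{proposition 3.1}(a) do not involve $\gamma$, and because $\chi$ is a homomorphism into the abelian group $\C^{\times}$, it is trivial on $\Gamma_{\gamma}^+$ exactly when it is trivial on the conjugate $\Gamma_{\gamma'}^+$. Hence $\E_{\gamma}=0$ iff $\E_{\gamma'}=0$, and in that case $\E_{\gamma'}=\chi(\delta)\E_{\gamma}$ holds trivially.

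So I may assume $\E_{\gamma}\neq0$ and work with $\E_{\gamma}=\sum_{\delta_1}\overline\chi(\delta_1)\,1(\tau)|\gamma\delta_1$, where $\delta_1$ runs over representatives with $\Gamma_{\infty}^+\gamma\Gamma_0(\stufe)=\cup_{\delta_1}\Gamma_{\infty}^+\gamma\delta_1$ (disjoint), and likewise $\E_{\gamma'}=\sum_{\delta_2}\overline\chi(\delta_2)\,1(\tau)|\gamma'\delta_2$. Setting $\epsilon'=\gamma^{-1}\epsilon\gamma\in\Gamma(\stufe)$ (normality again), I would rewrite $\gamma'=\delta_0\epsilon\gamma\delta=\delta_0\gamma\epsilon'\delta$, so that $\gamma'\delta_2=\delta_0\gamma\delta_1$ with $\delta_1:=\epsilon'\delta\delta_2\in\Gamma_0(\stufe)$. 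Since the slash is a right action and $1(\tau)|\delta_0=1(\tau)$ for $\delta_0\in\Gamma_{\infty}^+$, this yields both $1(\tau)|\gamma'\delta_2=1(\tau)|\gamma\delta_1$ and the coset identity $\Gamma_{\infty}^+\gamma'\delta_2=\Gamma_{\infty}^+\gamma\delta_1$. The map $\delta_2\mapsto\delta_1=\epsilon'\delta\delta_2$ is then a bijection between the two index sets: $\Gamma_{\infty}^+\gamma'\Gamma_0(\stufe)=\Gamma_{\infty}^+\gamma\Gamma_0(\stufe)$ (as $\epsilon'\delta\in\Gamma_0(\stufe)$), and distinct cosets on the left correspond to distinct cosets on the right by the identity just displayed, so $\{\delta_1\}$ is exactly a set of representatives for the sum defining $\E_{\gamma}$. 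For the character I would compute $\overline\chi(\delta_2)=\overline\chi(\delta^{-1}\epsilon'^{-1}\delta_1)=\chi(\delta)\,\overline\chi(\delta_1)$, using multiplicativity of $\chi$, the identity $\overline\chi(\delta^{-1})=\chi(\delta)$, and $\chi(\epsilon')=1$ (valid since $\epsilon'\equiv I\pmod{\stufe}$ forces its lower-right block to have determinant $\equiv1$). Substituting and reindexing gives $\E_{\gamma'}=\chi(\delta)\sum_{\delta_1}\overline\chi(\delta_1)\,1(\tau)|\gamma\delta_1=\chi(\delta)\E_{\gamma}$.

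I expect the main obstacle to be the bookkeeping in this reindexing step: one must verify that absorbing $\delta_0$ (left-invariance of $1(\tau)|\cdot$ under $\Gamma_{\infty}^+$) and moving $\epsilon$ across $\gamma$ (normality of $\Gamma(\stufe)$) are legitimate at the level of coset representatives, and that the factorization $\overline\chi(\delta_2)=\chi(\delta)\overline\chi(\delta_1)$ is genuinely independent of the choice of representatives. The latter is precisely the well-definedness of $\overline\chi$ on $\Gamma_{\gamma}^+\backslash\Gamma_0(\stufe)$ that is guaranteed by the hypothesis $\E_{\gamma}\neq0$ (i.e.\ $\chi$ trivial on $\Gamma_{\gamma}^+$), so no new input beyond Proposition~\ref{proposition 3.1} is needed.
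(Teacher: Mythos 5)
Your proof is correct, but it takes a genuinely different route from the paper's. The paper never invokes Proposition 3.1 here: it works with the unnormalized series, writing $\E'_{\gamma}=2\sum_h\overline\chi(\beta_h)\,\E^*|\gamma\beta_h$ with $\beta_h$ running over $\Gamma(\stufe)\backslash\Gamma_0(\stufe)$, re-indexes by $\beta_h\mapsto\delta\beta_h$ (normality of $\Gamma(\stufe)$ makes $\cup_h\Gamma(\stufe)\delta\beta_h$ another disjoint decomposition of $\Gamma_0(\stufe)$), and uses the invariance of $\E^*$ under $\Gamma_{\infty}^+\Gamma(\stufe)$ to get $\E'_{\gamma}=\overline\chi(\delta)\E'_{\gamma'}$ --- an identity that holds unconditionally, whether or not the series vanish; it then only remains to observe that $\Gamma^+_{\gamma}$ and $\Gamma^+_{\gamma'}$ are conjugate subgroups containing $\Gamma(\stufe)$, so the normalizing indices $[\Gamma^+_{\gamma}:\Gamma(\stufe)]$ and $[\Gamma^+_{\gamma'}:\Gamma(\stufe)]$ coincide. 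You instead take Proposition 3.1 as your starting point, which forces the case split: part (a) together with the conjugation $\Gamma^+_{\gamma'}=\delta^{-1}\Gamma^+_{\gamma}\delta$ disposes of the vanishing case (incidentally, your direction of conjugation is the correct one; the paper's proof states $\Gamma^+_{\gamma'}=\delta\Gamma^+_{\gamma}\delta^{-1}$, a harmless slip since only the index is used), and part (b)'s single-sum formula supports the re-indexing $\delta_2\mapsto\epsilon'\delta\delta_2$ in the nonzero case. What the paper's route buys is uniformity (no case distinction) and self-containedness, since it needs nothing beyond the definitions and the invariance of $\E^*$; what your route buys is that the normalization constants $\frac{1}{2[\Gamma^+_{\gamma}:\Gamma(\stufe)]}$ never appear, because Proposition 3.1(b) has already absorbed them. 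Both arguments ultimately rest on the same two pillars: normality of $\Gamma(\stufe)$ in $Sp_n(\Z)$, and the fact that $\chi$, i.e.\ $\begin{pmatrix} A&B\\C&D\end{pmatrix}\mapsto\chi(\det D)$, is a homomorphism on $\Gamma_0(\stufe)$, which is what makes your well-definedness and conjugation-invariance claims legitimate.
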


\begin{proof}  We have 
$\E'_{\gamma}=2\sum_h\overline\chi(\beta_h) \E^*|\gamma\beta_h$ where
$\Gamma_0(\stufe)=\cup_h\Gamma(\stufe)\beta_h$ (disjoint).  Thus
$\Gamma_0(\stufe)=\delta\Gamma_0(\stufe)=\cup_h\Gamma(\stufe)\delta\beta_h$
(recall that $\Gamma(\stufe)$ is a normal subgroup of $Sp_n(\Z)$); since $[\Gamma_0(\stufe):\Gamma(\stufe)]<\infty,$
this last union must be disjoint.  Thus
$$
\E'_{\gamma}=2\sum_h\overline\chi(\delta\beta_h) \E^*|\gamma\delta\beta_h
=2\overline\chi(\delta)\sum_h\overline\chi(\beta_h) \E^*|\gamma'\beta_h
=\overline\chi(\delta)\E'_{\gamma'}.
$$
Since $\Gamma^+_{\gamma'}=\delta\Gamma^+_{\gamma}\delta^{-1}$, 
we have $[\Gamma_0(\stufe):\Gamma^+_{\gamma'}]=[\Gamma_0(\stufe):\Gamma^+_{\gamma}]$ and so
the proposition follows.
\end{proof}

\begin{prop}\label{proposition 3.3}  Fix $\stufe\in\Z$.
Suppose $(M\ I)$, $(M'\ N')$, $(M''\ I)$ are coprime symmetric pairs so that
$(M''\ I)\equiv(M'\ N')\ (\stufe)$ and $(M'\ N')\in SL_n(\Z)(M\ I)\Gamma_0(\stufe).$
Then $(M''\ I)\in(M'\ N')\Gamma(\stufe)$ and hence
$(M''\ I)\in SL_n(\Z)(M\ I)\Gamma_0(\stufe).$
\end{prop}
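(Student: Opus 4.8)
The second conclusion is immediate from the first: once I know $(M''\ I)\in(M'\ N')\Gamma(\stufe)$, the hypothesis $(M'\ N')\in SL_n(\Z)(M\ I)\Gamma_0(\stufe)$ together with $\Gamma(\stufe)\subseteq\Gamma_0(\stufe)$ gives $(M''\ I)\in SL_n(\Z)(M\ I)\Gamma_0(\stufe)\Gamma(\stufe)=SL_n(\Z)(M\ I)\Gamma_0(\stufe)$. So the real task is to produce a single $\delta\in\Gamma(\stufe)$ with $(M'\ N')\delta=(M''\ I)$; note that neither the pair $(M\ I)$ nor the double-coset hypothesis plays any role in this part.

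The plan is to realize both coprime symmetric pairs as bottom rows of symplectic matrices and to compare the completions modulo $\stufe$. Since $(M''\ I)$ is a coprime symmetric pair, $M''$ is symmetric, so I can take the explicit completion $\gamma''=\begin{pmatrix} I&0\\ M''&I\end{pmatrix}\in Sp_n(\Z)$, for which $(0\ I)\gamma''=(M''\ I)$. I then choose any completion $\gamma'=\begin{pmatrix} K'&L'\\ M'&N'\end{pmatrix}\in Sp_n(\Z)$ of $(M'\ N')$ and set $\xi=\gamma'(\gamma'')^{-1}\in Sp_n(\Z)$; a direct block multiplication gives $\xi=\begin{pmatrix} K'-L'M''&L'\\ M'-N'M''&N'\end{pmatrix}$.

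The key step is to read off $\xi\bmod\stufe$. Using $M'\equiv M''$ and $N'\equiv I\ (\stufe)$, the lower-left block satisfies $M'-N'M''\equiv 0$ and the lower-right block satisfies $N'\equiv I$. Feeding $C_\xi\equiv 0$ and $D_\xi\equiv I$ into the symplectic relation $^tA_\xi D_\xi-{}^tC_\xi B_\xi=I$ forces $^tA_\xi\equiv I$, hence $A_\xi\equiv I\ (\stufe)$; thus $\xi\equiv\begin{pmatrix} I&\overline{L'}\\ 0&I\end{pmatrix}\ (\stufe)$, and the symplectic condition that $A_\xi\,{}^tB_\xi$ be symmetric then shows $\overline{L'}$ is symmetric modulo $\stufe$. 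I next lift $\overline{L'}$ to an integral \emph{symmetric} $B$ with $B\equiv L'\ (\stufe)$, set $g=\begin{pmatrix} I&B\\ 0&I\end{pmatrix}\in\Gamma_\infty^+$, and put $\delta=(\gamma')^{-1}g\gamma''\in Sp_n(\Z)$. Modulo $\stufe$ one has $\overline g=\overline\xi=\overline{\gamma'}\,\overline{(\gamma'')^{-1}}$, so $\overline\delta=I$, i.e. $\delta\in\Gamma(\stufe)$; and since $(0\ I)g=(0\ I)$, I get $(M'\ N')\delta=(0\ I)\gamma'\delta=(0\ I)g\gamma''=(0\ I)\gamma''=(M''\ I)$, as required.

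The one non-formal point — the place I expect to need actual care — is the final lift: I need $B$ to be symmetric, not merely congruent to $L'$ mod $\stufe$, so that $g$ genuinely lies in $\Gamma_\infty^+$ and the computation lands \emph{exactly} on $(M''\ I)$ rather than only up to $SL_n(\Z)$. This is where the symmetry of $\overline{L'}$ mod $\stufe$ (itself a consequence of the symplectic relations) is essential: writing $L'-{}^tL'=\stufe T$ with $T$ integral and antisymmetric, I can correct $L'$ by $\stufe$ times the strictly upper triangular part of $T$ to obtain such a $B$. Everything else is bookkeeping with the block congruences $M'\equiv M''$ and $N'\equiv I$.
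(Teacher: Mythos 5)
Your proof is correct, and it takes a genuinely different route from the paper's, though both live in the same framework of comparing symplectic completions. The paper front-loads all the work into choosing a \emph{special} completion: since $N'\equiv I\ (\stufe)$ forces $(\stufe M',N')=1$, the pair $(M'\ N')$ admits a completion $\begin{pmatrix} K'&L'\\M'&N'\end{pmatrix}\in Sp_n(\Z)$ with $L'\equiv 0\ (\stufe)$ (complete the coprime symmetric pair $(\stufe M'\ N')$ and rescale the upper-right block by $\stufe$), whence $K'\equiv I\ (\stufe)$ and the single element $\gamma=\begin{pmatrix} K'&L'\\M'&N'\end{pmatrix}^{-1}\begin{pmatrix} I&0\\M''&I\end{pmatrix}$ already lies in $\Gamma(\stufe)$, with no further correction needed. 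You instead take an \emph{arbitrary} completion $\gamma'$, observe that the discrepancy $\xi=\gamma'(\gamma'')^{-1}$ is upper unipotent modulo $\stufe$ with upper-right block symmetric modulo $\stufe$, and then repair it exactly by lifting that block to an integral symmetric $B$, setting $\delta=(\gamma')^{-1}\begin{pmatrix} I&B\\0&I\end{pmatrix}\gamma''$. The paper's route buys brevity, at the price of invoking the completion lemma for the auxiliary pair $(\stufe M'\ N')$ plus the rescaling trick; your route uses the completion lemma only in its plain form and makes the correction mechanism explicit, at the cost of the block computation and the symmetric-lift argument --- which you handle correctly: $(L'-\,^tL')/\stufe$ is integral and antisymmetric, so subtracting $\stufe$ times its strictly upper triangular part from $L'$ produces the required symmetric lift. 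Your reduction of the second conclusion to the first, and your remark that the double-coset hypothesis is irrelevant to the first conclusion, both match the logic of the paper exactly.
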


\begin{proof}  Since $(M',N')=1$ and $N'\equiv I\ (\stufe)$, we have $(\stufe M',N')=1$.
Thus there is some $\begin{pmatrix} K'&L'\\M'&N'\end{pmatrix}\in Sp_n(\Z)$ with $L'\equiv0\ (\stufe)$,
and hence $K'\equiv I\ (\stufe)$.  Set
$$\gamma=\begin{pmatrix} K'&L'\\M'&N'\end{pmatrix}^{-1} \begin{pmatrix} I&0\\M''&I\end{pmatrix};$$
thus $\gamma\in\Gamma(\stufe)$ and 
$(M''\ I)=(M'\ N')\gamma\in SL_n(\Z)(M\ I)\Gamma_0(\stufe)$.
\end{proof}

\begin{prop}\label{proposition 3.4} For $\gamma\in Sp_n(\Z)$, there exists
some $\gamma''=\begin{pmatrix} I&0\\M''&I\end{pmatrix}\in Sp_n(\Z)$ so that
$\gamma\in\Gamma^+_{\infty}\gamma''\Gamma_0(\stufe).$
Equivalently, for $(M\ N)$ a coprime symmetric pair, there is some symmetric $M''$
so that $$(M\ N)\in SL_n(\Z)(M''\ I)\Gamma_0(\stufe).$$
\end{prop}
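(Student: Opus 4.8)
The plan is to prove the equivalent formulation: given a coprime symmetric pair $(M\ N)$, I will produce an integral symmetric $M''$ with $(M\ N)\in SL_n(\Z)(M''\ I)\Gamma_0(\stufe)$. I would work with the right action of $\Gamma_0(\stufe)$ on pairs through the three elementary types $\begin{pmatrix} I&S\\0&I\end{pmatrix}$ (sending $(M\ N)\mapsto(M\ \ MS+N)$ for symmetric $S$), $\begin{pmatrix} U&0\\0&{}^tU^{-1}\end{pmatrix}$ (sending $(M\ N)\mapsto(MU\ \ N\,^tU^{-1})$ for $U\in GL_n(\Z)$), together with left multiplication by $SL_n(\Z)$, and the target is to massage $(M\ N)$ into a pair whose second block is $I$.

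The first step is to arrange for the second block to be invertible modulo $\stufe$, and then congruent to $I$. The crux is a statement over $\F$ (for each prime $p\mid\stufe$): for a coprime symmetric pair there is a symmetric $S$ with $MS+N$ invertible. I would prove this by a normal form, using left $GL_n(\F)$ row operations and the torus move $(M\ N)\mapsto(MU\ \ N\,^tU^{-1})$ to bring $M$ to $\diag(I_r,0)$; the condition that $M\,^tN$ be symmetric then forces $N$ into block upper-triangular shape with invertible lower-right block (coprimality), and choosing $S$ to correct the top-left block makes $MS+N$ invertible. By the Chinese Remainder Theorem I would assemble a single integral symmetric $S$ realizing these choices simultaneously, so that $\det(MS+N)$ is a unit modulo each $p\mid\stufe$; then after applying $\begin{pmatrix} I&S\\0&I\end{pmatrix}\in\Gamma_0(\stufe)$ the second block $\tilde N=MS+N$ is invertible modulo $\stufe$. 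A torus element $\begin{pmatrix} U&0\\0&{}^tU^{-1}\end{pmatrix}$ with $^tU\equiv\tilde N\ (\stufe)$ (available since $GL_n(\Z)\twoheadrightarrow GL_n(\Z/\stufe)$) replaces $\tilde N$ by $\hat N\equiv I\ (\stufe)$, yielding a coprime symmetric pair $(\hat M\ \hat N)$ with $\hat N\equiv I\ (\stufe)$ and $(M\ N)\in(\hat M\ \hat N)\Gamma_0(\stufe)$.

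The second step upgrades the congruence $\hat N\equiv I$ to an exact second block $I$. Since $\hat M\,^t\hat N$ is symmetric and $\hat N\equiv I\ (\stufe)$, the matrix $\hat M$ is symmetric modulo $\stufe$, so I can choose an integral symmetric $M''\equiv\hat M\ (\stufe)$; then $(M''\ I)$ is a coprime symmetric pair with $(M''\ I)\equiv(\hat M\ \hat N)\ (\stufe)$. Arguing exactly as in the proof of Proposition~\ref{proposition 3.3} (with $(\hat M\ \hat N)$ in the role of $(M'\ N')$: since $(\stufe\hat M,\hat N)=1$ one completes $(\hat M\ \hat N)$ to a symplectic matrix with upper-right block $\equiv 0\ (\stufe)$), I obtain $\gamma\in\Gamma(\stufe)$ with $(M''\ I)=(\hat M\ \hat N)\gamma$. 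Hence $(\hat M\ \hat N)\in(M''\ I)\Gamma(\stufe)\subseteq SL_n(\Z)(M''\ I)\Gamma_0(\stufe)$, and combining with the first step gives $(M\ N)\in SL_n(\Z)(M''\ I)\Gamma_0(\stufe)$, as required.

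The main obstacle is the $\F$-lemma in the first step: because $\Gamma_0(\stufe)$ provides no way to interchange columns of $M$ and $N$ modulo $p$, making the second block invertible is not formal and genuinely uses the symmetry of $M\,^tN$ through the normal-form reduction. Once past this point, the torus normalization and the $\Gamma(\stufe)$-lifting borrowed from Proposition~\ref{proposition 3.3} are routine.
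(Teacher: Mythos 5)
Your first and last steps are sound: the normal-form lemma over $\F=\Z/p\Z$ does produce, for each prime $p\mid\stufe$, a symmetric $S_p$ with $MS_p+N$ invertible modulo $p$ (the existence statement is indeed invariant under $(M\ N)\mapsto(EMU\ \,EN\,^tU^{-1})$, since $M(US'\,^tU)+N=E^{-1}(M'S'+N')\,^tU$), the CRT assembly gives an integral symmetric $S$ with $\det(MS+N)$ prime to $\stufe$, and the final lifting via the argument of Proposition~3.3 is exactly right. The gap is in the torus normalization: the claim that $GL_n(\Z)$ surjects onto $GL_n(\Z/\stufe\Z)$ is false. Every $U\in GL_n(\Z)$ has $\det U=\pm1$, so the image of $GL_n(\Z)$ in $GL_n(\Z/\stufe\Z)$ contains only classes with determinant $\equiv\pm1\ (\stufe)$, while $\det\tilde N$ can be an arbitrary unit modulo $\stufe$; in general there is no $U\in GL_n(\Z)$ with $^tU\equiv\tilde N\ (\stufe)$. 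Moreover this is not a removable technicality within your set of moves. Take $n=1$, $\stufe=5$, $(M\ N)=(5\ 2)$: left multiplication by $SL_1(\Z)=\{1\}$ is trivial, the unipotent move sends $N=2$ to $5S+2\equiv2\ (5)$, and the torus move sends it to $\pm2$; so every pair reachable by your three types of elements has second entry $\equiv\pm2\ (5)$, never $\equiv1\ (5)$. Yet the proposition holds for this pair, e.g. $(5\ 2)\begin{pmatrix}2&-1\\-5&3\end{pmatrix}=(0\ 1)$ with $\begin{pmatrix}2&-1\\-5&3\end{pmatrix}\in\Gamma_0(5)$ --- an element of $\Gamma_0(\stufe)$ that is not a product of your three types. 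In group-theoretic terms, your moves generate only the part of the Siegel parabolic modulo $\stufe$ whose $A$-block has determinant $\equiv\pm1$, which is a proper subgroup of the image of $\Gamma_0(\stufe)$.

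The missing ingredient is precisely what the paper's proof is organized around: elements of $\Gamma_0(\stufe)$ that are congruent modulo $\stufe$ to torus elements $\begin{pmatrix}A&0\\0&^tA^{-1}\end{pmatrix}$ with $\det A$ an arbitrary unit. The paper builds these explicitly by embedding $SL_2(\Z)$-matrices $\begin{pmatrix}u&v\\w&x\end{pmatrix}$, chosen via Lemma~6.1/CRT to be $\equiv\diag(a,\overline a)$ modulo $q^t$ and $\equiv I$ modulo $\stufe/q^t$, into $Sp_n(\Z)$ (the matrix $\gamma_0$ in the paper's proof), and uses them to absorb the single offending unit after reducing the relevant block to $\diag(a,I)$ by $SL$-row and column operations. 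Your argument can be repaired the same way: once $\tilde N$ is invertible modulo $\stufe$, use left $SL_n(\Z)$ and $SL_n(\Z)$-torus moves to bring $\tilde N$ to $\equiv\diag(a,1,\dots,1)\ (\stufe)$ with $a=\det\tilde N$, then apply such an embedded-$SL_2$ element of $\Gamma_0(\stufe)$ to replace $a$ by $1$ (alternatively, invoke surjectivity of $Sp_n(\Z)\to Sp_n(\Z/\stufe\Z)$ to lift $\begin{pmatrix}^t\tilde N&0\\0&\tilde N^{-1}\end{pmatrix}$ into $\Gamma_0(\stufe)$, a fact the paper deliberately avoids by its explicit constructions). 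With that correction your route --- make $N$ invertible modulo $\stufe$ in one pass, normalize, then lift by Proposition~3.3 --- becomes a viable alternative to the paper's prime-by-prime reduction; as written, however, the normalization step fails.
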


\begin{proof}  
Given $\gamma=\begin{pmatrix} *&*\\M&N\end{pmatrix}, \gamma''=\begin{pmatrix} I&0\\M''&I\end{pmatrix}\in Sp_n(\Z)$, 
recall that we have
$\gamma\in\Gamma^+_{\infty}\gamma''\Gamma_0(\stufe)$ if and only if
$(M''\ I)\in SL_n(\Z)(M\ N)\Gamma_0(\stufe)$.  
By Proposition 3.3, it suffices to show there is some $(M'\ N')\in SL_n(\Z)(M\ N)\Gamma_0(\stufe)$
so that $N'\equiv I\ (\stufe)$; we proceed algorithmically.

Fix a prime $q$ dividing $\stufe$
and take $t$ so that $q^t\parallel\stufe$.  
Using Lemma 6.1, we can choose $E_0,G_0\in SL_n(\Z)$ so that $E_0,G_0\equiv I\ (\stufe/q^t)$
and $E_0N\,^tG_0^{-1}\equiv\begin{pmatrix} N_1&0\\0&0\end{pmatrix}\ (q^t)$ where
$N_1$ is $d\times d$ and invertible modulo $q$ (so $d=\rank_qN$).  We can adjust
$E_0,G_0$ so that $N_1\equiv\begin{pmatrix} a\\&I\end{pmatrix}\ (q^t)$, some $a$.  Similarly, we can choose
$\begin{pmatrix} u&v\\w&x\end{pmatrix}\in SL_2(\Z)$ so that 
$\begin{pmatrix} u&v\\w&x\end{pmatrix}\equiv I\ (\stufe/q^t)$, 
$\begin{pmatrix} u&v\\w&x\end{pmatrix}\equiv\begin{pmatrix} a&0\\0&\overline a\end{pmatrix}\ (q^t)$
(where $a\overline a\equiv 1\ (q^t)$).
Then
$$\gamma_0=\begin{pmatrix} u&&v\\&I_{n-1}\\w&&x\\&&&I_{n-1}\end{pmatrix}
\in\Gamma_0(\stufe)$$
and $E_0(M\ N)\begin{pmatrix} G_0\\&^tG_0^{-1}\end{pmatrix}\gamma_0\equiv
\left(\begin{pmatrix} M_1&M_2\\M_3&M_4\end{pmatrix}\ \begin{pmatrix} I_d\\&0\end{pmatrix}\right)\ (q^t)$
with $M_1$ $d\times d$.
By the symmetry of $M\,^tN$, 
$M_3\equiv0\ (q^t)$; since $(M,N)=1$,
$M_4$ is invertible modulo $q$.  
Thus using Lemma 6.1 we can find $E_1',G_1'\in SL_{n-d}(\Z)$ so that 
$E_1', G_1'\equiv I\ (\stufe/q^t),$
$$M'_4=E_1'M_4 G_1'\equiv\begin{pmatrix} I\\&a'\end{pmatrix}\ (q^t), \text{ some }a'.$$  
Take $E_1=\begin{pmatrix} I_d\\&E_1'\end{pmatrix}$,
$G_1=\begin{pmatrix} I_d\\&G_1'\end{pmatrix}$.
Using the Chinese Remainder Theorem, we can choose $W'$ so that $W'\equiv 0\ (\stufe/q^t)$ and
$W'\equiv \begin{pmatrix} I_{n-d-1}\\&\overline a'\end{pmatrix}\ (q^t)$ where $a'\overline a'\equiv 1\ (q^t)$; set
$W=\begin{pmatrix} 0_d\\&W'\end{pmatrix}.$
Then with
$$(C\ D)=E_1E_0(M\ N)\begin{pmatrix} G_0\\&^tG_0^{-1}\end{pmatrix} \gamma_0\begin{pmatrix} G_1\\&^tG_1^{-1}\end{pmatrix}
\begin{pmatrix} I&W\\0&I\end{pmatrix},$$
we have $(C\ D)\in SL_n(\Z)(M\ N)\Gamma_0(\stufe)$, $(C\ D)\equiv(M\ N)\ (\stufe/q^t),$ and
$D\equiv I\ (q^t).$

Next, suppose $p$ is another prime dividing $\stufe$ with $p^r\parallel\stufe$.  Applying the above process to the pair
$(C\ D)$, we obtain a pair $(C'\ D')\in SL_n(\Z)(M\ N)\Gamma_0(\stufe)$ with
$(C'\ D')\equiv (M\ N)\ (\stufe/(q^tp^r))$ and $D'\equiv I\ (q^tp^r)$.
Continuing, we obtain $(M'\  N')\in SL_n(\Z)(M\ N)\Gamma_0(\stufe)$ with $N'\equiv I\ (\stufe)$.
Applying Proposition 3.3 completes the proof.
\end{proof}

\begin{prop}\label{proposition 3.5}  
Let $(M\ N)$ be a coprime symmetric pair.  There is some symmetric matrix $M_{\sigma}$ so that $(M\ N)\in SL_n(\Z)(M_{\sigma}\ I)\Gamma_0(\stufe)$
and for each prime $q$ with $q\parallel\stufe$, we have $M_{\sigma}\equiv\begin{pmatrix}I_{d}\\&0\end{pmatrix}$ where $d=d(q)=\rank_qM.$
Thus when $\stufe$ is square-free and $M_{\sigma}$ is as above, we have 
$(M\ N)\in SL_n(\Z)(M_{\sigma}\ I)\Gamma_0(\stufe)$
if and only if $\rank_qM=\rank_qM_{\sigma}$ for all primes $q|\stufe$.
Further, with $\stufe$ square-free, we can take $M_{\sigma}$ diagonal, and we have
$$SL_n(\Z)(M_{\sigma}\ I)\Gamma_0(\stufe)=GL_n(\Z)(M_{\sigma}\ I)\Gamma_0(\stufe).$$
\end{prop}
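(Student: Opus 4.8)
The plan is to reduce everything to one normal-form computation over the residue field $\Z/q\Z$ for each prime $q\mid\stufe$, using Propositions 3.3 and 3.4 to pass between global double cosets and congruences modulo $\stufe$. First I would invoke Proposition 3.4 to fix a symmetric $M''$ with $(M\ N)\in SL_n(\Z)(M''\ I)\Gamma_0(\stufe)$. Since the lower-left block of every $\gamma=\begin{pmatrix}A&B\\C&D\end{pmatrix}\in\Gamma_0(\stufe)$ satisfies $C\equiv0\ (\stufe)$, the relation $A\,^tD-B\,^tC=I$ forces $A$ to be invertible modulo each $q\mid\stufe$; hence left multiplication by $SL_n(\Z)$ and right multiplication by $\Gamma_0(\stufe)$ change the first component of a pair only by $M''\mapsto EM''A$ with $E,A$ invertible mod $q$, so $\rank_q M''=\rank_q M=:d(q)$ for every $q\mid\stufe$. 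This rank-invariance already gives the forward direction of the square-free ``if and only if''.

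Next I would build the target by the Chinese Remainder Theorem: a symmetric integral $M_\sigma$ with $M_\sigma\equiv\begin{pmatrix}I_{d(q)}\\&0\end{pmatrix}\ (q)$ for each $q\parallel\stufe$ and $M_\sigma\equiv M''\ (q^t)$ for each $q^t\parallel\stufe$ with $t\ge2$ (in the square-free case only the first kind of congruence occurs). As the second component is $I$, the pair $(M_\sigma\ I)$ is automatically coprime symmetric. To prove $(M_\sigma\ I)\in SL_n(\Z)(M''\ I)\Gamma_0(\stufe)$ I would apply Proposition 3.3: it suffices to exhibit one pair in $SL_n(\Z)(M_\sigma\ I)\Gamma_0(\stufe)$ congruent to $(M''\ I)$ modulo $\stufe$. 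Since $SL_n(\Z)$ and $\Gamma_0(\stufe)$ both surject, modulo $\stufe$, onto $SL_n(\Z/\stufe\Z)$ and the reduction of $\Gamma_0(\stufe)$ (Lemma 6.1), and $\Z/\stufe\Z$ factors through its primes, this reduces to checking, for each $q\parallel\stufe$, that $(\overline{M''}\ I)$ lies in $SL_n(\Z/q\Z)\,(\overline{M_\sigma}\ I)\,P$, where $\overline{M_\sigma}=\begin{pmatrix}I_d\\&0\end{pmatrix}$ and $P$ is the image of $\Gamma_0(\stufe)$, namely the Siegel parabolic of matrices $\begin{pmatrix}A&B\\0&{}^tA^{-1}\end{pmatrix}$ with $A\,^tB$ symmetric; the congruences at primes with $t\ge2$ are met trivially since there $M_\sigma\equiv M''$.

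The heart of the argument, and the step I expect to be the main obstacle, is this finite-field computation. Working over $\Z/q\Z$, I would take an arbitrary coprime symmetric pair with $\rank_q M=d$ and reduce it to $\left(\begin{pmatrix}I_d\\&0\end{pmatrix}\ I\right)$ using only left multiplication by $SL_n$ and right multiplication by $P$. The sequence is: bring $M$ to $\begin{pmatrix}I_d\\&0\end{pmatrix}$ by row operations (left $SL_n$) and column operations (the $GL_n$ Levi of $P$), absorbing the determinant of the row transformation into the Levi factor so that the left factor stays in $SL_n$; the symmetry of $M\,^tN$ then forces the transformed $N$ to be block upper triangular with invertible lower-right block; a right unipotent $\begin{pmatrix}I&B\\0&I\end{pmatrix}$ clears the remaining blocks of $N$, a Levi factor normalizes its lower-right block to $I$, and a final unipotent with $B=\begin{pmatrix}I_d\\&0\end{pmatrix}$ restores the second component to $I$. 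The delicate points are keeping the left factor in $SL_n$ rather than $GL_n$ (handled by pushing determinants into the Levi, valid once $d\ge1$; the case $d=0$ is immediate) and tracking the symmetric-pair constraint throughout.

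Finally I would assemble the remaining claims. Transitivity of double-coset membership combines $(M\ N)\in SL_n(\Z)(M''\ I)\Gamma_0(\stufe)$ with $(M''\ I)\in SL_n(\Z)(M_\sigma\ I)\Gamma_0(\stufe)$ to give the first assertion. For square-free $\stufe$ the equivalence follows: the forward implication is the rank-invariance above, and for the converse, equal ranks force $M_\sigma\equiv M_\sigma'\ (\stufe)$ (both being $\begin{pmatrix}I_{d(q)}\\&0\end{pmatrix}$ modulo each $q\mid\stufe$), so Proposition 3.3 places the two pairs in one double coset. For the diagonal choice, CRT yields $M_\sigma=\diag(m_1,\dots,m_n)$ with $m_i\equiv1\ (q)$ for $i\le d(q)$ and $m_i\equiv0\ (q)$ otherwise. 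For $SL_n(\Z)(M_\sigma\ I)\Gamma_0(\stufe)=GL_n(\Z)(M_\sigma\ I)\Gamma_0(\stufe)$, since $GL_n(\Z)=SL_n(\Z)\cup SL_n(\Z)G_\pm$ it suffices to show $G_\pm(M_\sigma\ I)\in SL_n(\Z)(M_\sigma\ I)\Gamma_0(\stufe)$; using $\gamma_\pm\in\Gamma_0(\stufe)$ one computes $G_\pm(M_\sigma\ I)\gamma_\pm=(G_\pm M_\sigma G_\pm\ I)=(M_\sigma\ I)$, the last equality because $M_\sigma$ is diagonal and $G_\pm$ only changes the sign of the first row and column, whence $G_\pm(M_\sigma\ I)=(M_\sigma\ I)\gamma_\pm^{-1}\in(M_\sigma\ I)\Gamma_0(\stufe)$ as $\gamma_\pm^{-1}=\gamma_\pm\in\Gamma_0(\stufe)$.
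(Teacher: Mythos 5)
Your proof is correct, and its central computation takes a genuinely different route from the paper's. The shared skeleton is identical: Proposition 3.4 to reduce to a pair $(M''\ I)$; the observation that elements of $\Gamma_0(\stufe)$ have upper-left block invertible modulo $\stufe$, so $\rank_q$ is a double-coset invariant; CRT to assemble $M_\sigma$; Proposition 3.3 to convert a congruence modulo $\stufe$ into double-coset membership; and the same $G_{\pm}$, $\gamma_{\pm}$ computation for $SL_n(\Z)$ versus $GL_n(\Z)$. The divergence is the normal-form step at a prime $q\parallel\stufe$. The paper keeps the pair in the shape (symmetric, $I$), so its only available moves are $M''\mapsto E_qM''\,^tE_q$; it therefore invokes the classification of symmetric matrices over $\Z/q\Z$ up to $SL_n$-congruence to reach $\begin{pmatrix}a\\&I_{d-1}\\&&0\end{pmatrix}$, and then repairs the unit $a$ with one explicitly constructed element of $\Gamma_0(\stufe)$. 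You instead let the full Siegel parabolic act: rank normal form $EMA=\begin{pmatrix}I_d\\&0\end{pmatrix}$ with independent row and column operations, block-triangularity of $N$ from symmetry of $M\,^tN$ (triangularity comes from symmetry, invertibility of the lower-right block from coprimality; symmetry also forces the upper-left $d\times d$ block of $N$ to be symmetric, which is exactly what makes your clearing unipotent admissible), then unipotent--Levi--unipotent to restore the second component to $I$. This buys real robustness: your computation uses no quadratic form theory and in particular is insensitive to characteristic $2$, whereas congruence-diagonalization genuinely breaks there --- if $M''$ has even diagonal modulo $2$ (e.g. $M''\equiv\begin{pmatrix}0&1\\1&0\end{pmatrix}\ (2)$), then every $E M''\,^tE$ again has even diagonal, so no $SL_n$-congruence can reach $\begin{pmatrix}a\\&I_{d-1}\\&&0\end{pmatrix}$ with $d\ge1$; your Levi action $M\mapsto EMA$ has no such obstruction, so your argument covers $q=2$ where the paper's step as written does not. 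The one point where you are too quick is the lifting step: you assert, citing Lemma 6.1, that $\Gamma_0(\stufe)$ surjects modulo $q$ onto the full Siegel parabolic of $Sp_n(\Z/q\Z)$, and Lemma 6.1 by itself does not say this. Either invoke surjectivity of $Sp_n(\Z)\to Sp_n(\Z/q\Z)$, or --- entirely within the paper's toolkit --- note that the only parabolic elements your reduction needs are symmetric unipotents $\begin{pmatrix}I&B\\0&I\end{pmatrix}$ (lift $B$ to an integral symmetric matrix with $B\equiv0\ (\stufe/q)$), Levi elements with determinant $\equiv1\ (q)$ (lift by Lemma 6.1(a)), and the diagonal Levi element $\diag(a,1,\ldots,1)$ (lift by the embedded $SL_2$ construction already used in the proofs of Propositions 3.4 and 3.5); with those explicit lifts your argument is complete.
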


\begin{proof}  
First note that if $(M\ N), (M_{\sigma}\ I)$ are coprime symmetric pairs with $(M\ N)\in SL_n(\Z)(M_{\sigma}\ I)\Gamma_0(\stufe)$,
then $\rank_qM=\rank_qM_{\sigma}$ for all primes $q|\stufe$, since elements of $\Gamma_0(\stufe)$ are of the form
$\begin{pmatrix}A&B\\C&D\end{pmatrix}$ with $C\equiv0\ (\stufe)$ and thus $A$ invertible modulo $\stufe$.

From Proposition 3.4, we know there is a symmetric matrix $M''$ so that $(M\ N)\in SL_n(\Z)(M''\ I)\Gamma_0(\stufe)$.  
Suppose $q$ is prime with $q\parallel\stufe$; let $d=d(q)=\rank_q M''$.  
If $d=0$ then set $E_q=I_n$ and $\gamma_q=I_{2n}$. Otherwise,
using Lemma 6.1, we can choose $E_q\in SL_n(\Z)$ so that $E_q\equiv I\ (\stufe/q)$
and $E_qM''\,^tE_q\equiv\begin{pmatrix} a\\&I_{d-1}\\&&0\end{pmatrix}\ (q);$
choose $\begin{pmatrix} w&x\\y&z\end{pmatrix}\in SL_n(\Z)$ so that
$$\begin{pmatrix} w&x\\y&z\end{pmatrix}\equiv I\ (\stufe/q),\ 
\begin{pmatrix} w&x\\y&z\end{pmatrix}\equiv \begin{pmatrix} \overline a&\overline a-1\\0&a\end{pmatrix}\ (q)$$ where $a\overline a\equiv 1\ (q)$, 
and set
$$\gamma_q=\begin{pmatrix} ^tE_q\\&E_q^{-1}\end{pmatrix}\begin{pmatrix} w&&x\\&I_{n-1}&&0\\y&&z\\&0&&I_{n-1}\end{pmatrix}.$$
Set $E=\prod_{q\parallel\stufe}E_q$, $\gamma=\prod_{q\parallel\stufe}\gamma_q.$
Thus $E\in SL_n(\Z)$, $\gamma\in\Gamma_0(\stufe)$; set $(M'\ N')=E(M''\ I)\gamma.$
So $(M'\ N')\equiv(M_{\sigma}\ I)\ (\stufe)$ for some symmetric $M_{\sigma}$ with
$M_{\sigma}\equiv\begin{pmatrix}I_{d(q)}\\&0\end{pmatrix}\ (q)$ for all primes $q\parallel\stufe$.
Then by Proposition 3.3, $(M\ N)\in SL_n(\Z)(M_{\sigma}\ I)\Gamma_0(\stufe).$

Suppose $\stufe$ is square-free; then 
we can use the Chinese Remainder Theorem to choose $M_{\sigma}$ diagonal with
$M_{\sigma}\equiv\begin{pmatrix} I_{d(q)}\\&0\end{pmatrix}\ (q)$ for each prime $q|\stufe$.
Also,
\begin{align*}
&SL_n(\Z)(M_{\sigma}\ I)\Gamma_0(\stufe)\\
&\quad = SL_n(\Z)(M_{\sigma}\ I)\Gamma_0(\stufe)\cup SL_n(\Z)(M_{\sigma}\ I)\gamma_{\pm}\Gamma_0(\stufe)\\
&\quad = SL_n(\Z)(M_{\sigma}\ I)\Gamma_0(\stufe)\cup SL_n(\Z)G_{\pm}(M_{\sigma}\ I)\Gamma_0(\stufe)\\
&\quad = GL_n(\Z)(M_{\sigma}\ I)\Gamma_0(\stufe).
\end{align*}
This proves the proposition.
\end{proof}

Using Proposition 3.5, we 
fix a set of representatives $\left\{\gamma_{\sigma}=\begin{pmatrix} I&0\\M_{\sigma}&I\end{pmatrix}\right\}_{\sigma}$ for
$\Gamma_{\infty}\backslash Sp_n(\Z)/\Gamma_0(\stufe)$ so that when $q$ is a prime with $q\parallel\stufe$,
we have $M_{\sigma}\equiv\begin{pmatrix} I_d\\&0\end{pmatrix}\ (q)$ for some $d=d(q)$,
and when $\stufe$ is square-free, $M_{\sigma}$ is diagonal.
Let $\E_{\sigma}$ denote $\E_{\gamma_{\sigma}}.$

\begin{prop}\label{proposition 3.6}  Suppose that $\chi(-1)=(-1)^k$, and either $\stufe>2$ or $k$ is even.
\item{(1)}  Suppose $\E_{\sigma}\not=0$ and $q$ is prime so that $q\parallel\stufe$; let $d=d(q)=\rank_qM_{\sigma}.$
If $0<d<n$ then $\chi_q^2=1$.
\item{(2)}  Suppose $\stufe$ is square-free.  Then $\E_{\sigma}\not=0$ if and only if
$\chi_q^2=1$ for all primes $q|\stufe$ so that $0<\rank_qM_{\sigma}<n$.
\end{prop}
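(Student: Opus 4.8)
The plan is to reduce everything to Proposition 3.1(a): under the standing hypotheses $\chi(-1)=(-1)^k$ and ($\stufe>2$ or $k$ even), conditions (1) and (3) of that proposition already hold, so $\E_\sigma\ne 0$ if and only if $\chi$ is trivial on the stabilizer $\Gamma^+_{\gamma_\sigma}$. Thus both parts amount to computing the image of the homomorphism $\delta'\mapsto\chi(\det D')$ (writing $\delta'=\begin{pmatrix}A'&B'\\C'&D'\end{pmatrix}$) as $\delta'$ ranges over $\Gamma^+_{\gamma_\sigma}$, and deciding when this image is trivial.

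First I would make the stabilizer explicit. Since $\Gamma(\stufe)$ is normal in $Sp_n(\Z)$, the set $H:=\Gamma^+_\infty\Gamma(\stufe)$ is a subgroup, and writing each $h\in H$ as $h=ug$ with $u\in\Gamma^+_\infty$ and $g\equiv I\ (\stufe)$ shows $H=\{h\in Sp_n(\Z):\ h\bmod\stufe\in P_1\}$, where $P_1$ is the subgroup of $Sp_n(\Z/\stufe)$ of block-upper-triangular matrices whose top-left block has determinant $1$ (here I invoke surjectivity of reduction, which follows from Lemma 6.1). Consequently $\delta'\in\Gamma^+_{\gamma_\sigma}$ if and only if $\gamma_\sigma\delta'\gamma_\sigma^{-1}\bmod\stufe\in P_1$, a condition that by the Chinese Remainder Theorem may be checked one prime at a time.

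Next, fix $q\parallel\stufe$ and work modulo $q$, where $M_\sigma\equiv\begin{pmatrix}I_d&0\\0&0\end{pmatrix}$. Writing $A',B',D'$ in $d+(n-d)$ blocks and using $\gamma_\sigma\delta'\gamma_\sigma^{-1}=\begin{pmatrix}A'-B'M_\sigma&B'\\M_\sigma A'+C'-(M_\sigma B'+D')M_\sigma&M_\sigma B'+D'\end{pmatrix}$ together with $C'\equiv0\ (q)$, the condition of lying in $P_1$ forces, modulo $q$, the vanishing $A'_{12}\equiv0$, $D'_{21}\equiv0$, the relation $A'_{11}-B'_{11}\equiv D'_{11}$ coming from the bottom-left block, and $\det A'_{22}\equiv\det A'_{11}$ coming from the top-left determinant being $1$. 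Combining these with the symplectic identity $\,^tA'\,D'=I$ (whence $\det D'_{11}\equiv(\det A'_{11})^{-1}$ and $\det D'_{22}\equiv(\det A'_{22})^{-1}$) yields $\det D'\equiv(\det A'_{11})^{-2}\ (q)$. In particular $\det D'$ is always a square modulo $q$, and when $0<d<n$ I can produce an explicit $\delta'\in\Gamma^+_{\gamma_\sigma}$, trivial at the other primes of $\stufe$, realizing any prescribed value of $\det A'_{11}\in(\Z/q\Z)^\times$. This proves (1): if $\E_\sigma\ne0$ then $\chi(\det D')=\chi_q(\det A'_{11})^{-2}$ must be trivial for all $\det A'_{11}\in(\Z/q\Z)^\times$, forcing $\chi_q^2=1$.

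Finally, for square-free $\stufe$ the forward implication in (2) is exactly (1) applied at each $q\mid\stufe$ with $0<\rank_q M_\sigma<n$. For the converse I would check the two extreme ranks: when $d=0$ the condition reduces to $\det A'\equiv1\ (q)$, hence $\det D'\equiv1\ (q)$, and when $d=n$ it forces $D'\equiv A'-B'$ with $\det(A'-B')\equiv1$, hence again $\det D'\equiv1\ (q)$; in both cases $\chi_q(\det D')=1$ automatically. For $0<d<n$ the identity $\det D'\equiv(\det A'_{11})^{-2}$ shows $\det D'$ is a square, so $\chi_q^2=1$ gives $\chi_q(\det D')=1$. By the Chinese Remainder Theorem these per-prime conclusions combine to show $\chi$ is trivial on $\Gamma^+_{\gamma_\sigma}$, giving $\E_\sigma\ne0$. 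I expect the main obstacle to be the bookkeeping in the third step, namely verifying that the constructed $\delta'$ genuinely lies in $\Gamma^+_{\gamma_\sigma}$ (satisfying all the symplectic and parabolic constraints simultaneously) while still realizing an arbitrary determinant, rather than the determinant computation itself.
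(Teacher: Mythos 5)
Your proposal is correct and follows essentially the same route as the paper: both reduce via Proposition 3.1 to deciding when $\chi$ is trivial on $\Gamma^+_{\gamma_\sigma}$, then work one prime at a time with the block decomposition forced by $M_\sigma\equiv\begin{pmatrix}I_d&0\\0&0\end{pmatrix}\ (q)$, reaching the same conclusion that $\det D'\equiv 1\ (q)$ when $d=0$ or $d=n$ and $\det D'$ is a square modulo $q$ when $0<d<n$ (your identity $\det D'\equiv(\det A'_{11})^{-2}$ is the paper's $\det D\equiv(\det E_1)^2\ (q)$ in different coordinates, your membership condition $\gamma_\sigma\delta'\gamma_\sigma^{-1}\bmod\stufe\in P_1$ being equivalent to the paper's equation $E(M_\sigma A\ \ M_\sigma B+D)\equiv(M_\sigma\ I)\ (\stufe)$). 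The single step you defer as bookkeeping---exhibiting $\delta'\in\Gamma^+_{\gamma_\sigma}$, trivial at the other primes, realizing an arbitrary unit $\det A'_{11}$ modulo $q$---is exactly what the paper's explicit matrices $E$ and $\delta$ accomplish, built from an $SL_2$ block congruent to $\diag(u,\overline u)$ modulo $q$ and to $I$ modulo $\stufe/q$ via Lemma 6.1, so that step is indeed routine.
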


\begin{proof}  
(1)  Suppose we have a prime $q\parallel\stufe$ with $0<d<n$ where $d=\rank_qM_{\sigma}.$
Choose $u\in\Z$ so that $q\nmid u$, and (using Lemma 6.1) choose $\begin{pmatrix} w&x\\y&z\end{pmatrix}\in SL_n(\Z)$
so that $\begin{pmatrix} w&x\\y&z\end{pmatrix}\equiv I\ (\stufe/q)$ and
$\begin{pmatrix} w&x\\y&z\end{pmatrix}\equiv \begin{pmatrix} u\\&\overline u\end{pmatrix}\ (q)$ where $u\overline u\equiv 1\ (q).$
Set 
$E=\begin{pmatrix} w&&x\\&I_{n-2}\\y&&z\end{pmatrix},$
$$\delta=\begin{pmatrix} z&&&x\\&I_{n-2}&&&0\\&&z&&&x\\y&&&w\\&0&&&I_{n-2}\\&&y&&&w\end{pmatrix}
\begin{pmatrix} 1&&1-u^2\\&I_{n-1}&&0\\&&1\\&&&I_{n-1}\end{pmatrix}.$$
Thus $E\in SL_n(\Z)$, $\delta\in\Gamma_0(\stufe)$, and $E(M_{\sigma}\ I)\delta\equiv(M_{\sigma}\ I)\ (\stufe).$
So $\delta\in\Gamma_{\gamma_{\sigma}}^+$, and thus
$\E_{\sigma}=\E_{\sigma}|\delta$.  We also have
$\E_{\sigma}|\delta=\chi_q(u^2)\E_{\sigma}.$  Since $\E_{\sigma}\not=0$, this means
$\chi_q^2(u)=1$, and this holds for all $u\in\Z$ where $q\nmid u$.  Hence $\chi_q^2=1$.

(2)  Now suppose $\stufe$ is square-free, and
that for each prime $q|\stufe$ with $0<\rank_qM_{\sigma}<n$, we have $\chi_q^2=1$.
To show $\E_{\sigma}\not=0$, we need to
show $\chi$ is trivial on $\Gamma_{\gamma_{\sigma}}^+$.
To do this, we show that for all primes $q|\stufe$, $\chi_q$ is trivial on $\Gamma_{\gamma_{\sigma}}^+$.
So take $\beta=\begin{pmatrix} A&B\\C&D\end{pmatrix}\in\Gamma^+_{\gamma_{\sigma}}$.  Thus there exist
$\delta=\begin{pmatrix} ^tE^{-1}&WE\\&E\end{pmatrix}\in\Gamma_{\infty}^+$,
$\beta'\in\Gamma(\stufe)$ so that $\delta\beta'\gamma_{\sigma}\beta=\gamma_{\sigma}$.
Thus $E(M_{\sigma}A\ M_{\sigma}B+D)\equiv (M_{\sigma}\ I)\ (\stufe).$
Fix a prime $q|\stufe$, and set $d=\rank_qM_{\sigma}$.  

When $d=0$, we have $ED\equiv I\ (q)$, so $\det D\equiv\det\overline E\equiv 1\ (q)$
and $\chi_q(\det D)=1$.  When $d=n$, we have
$EA\equiv I\equiv A\,^tD\ (q)$, so $\det D\equiv\det E\equiv 1\ (q)$ and $\chi_q(\det D)=1$.

Now suppose $0<d<n$.
Write 
$$A=\begin{pmatrix} A_1&A_2\\A_3&A_4\end{pmatrix},\ 
D=\begin{pmatrix} D_1&D_2\\D_3&D_4\end{pmatrix},\ 
E=\begin{pmatrix} E_1&E_2\\E_3&E_4\end{pmatrix}$$
where $A_1, D_1, E_1$ are $d\times d$.
Since $EM_{\sigma}A\equiv\begin{pmatrix} I_d\\&0\end{pmatrix}\ (q),$
we have $E_3(A_1\ A_2)\equiv 0\ (q)$.  Since $A$ is invertible modulo $q$, the rows of
$(A_1\ A_2)$ are linearly independent modulo $q$, and hence we must have $E_3\equiv0\ (q),$
$\rank_qE_1=d$, $\rank_qE_4=n-d$, and
$$1=\det E\equiv\det E_1\cdot\det E_4\ (q).$$
Also, since
$$E_1(A_1\ A_2)\equiv(I_d\ 0)\ (q),\ E_4(D_3\ D_4)\equiv(0\ I_{n-d})\ (q),$$
we have $A_2,D_3\equiv0\ (q)$, $A_1\equiv\overline E_1\ (q),$ $D_4\equiv\overline E_4\ (q).$
Since $A\,^tD\equiv I\ (q)$, we must have $D_1\equiv\,^tE_1\ (q).$  Thus we have 
$$\det D\equiv \det E_1\cdot\det \overline E_4
\equiv (\det E_1)^2\ (q)$$
and hence
$$\chi_q(\det D)=\chi_q^2(\det E_1)=1.$$
Thus with $\beta\in\Gamma_{\gamma_{\sigma}}^+$, for all primes $q|\stufe$ we have $\chi_q(\beta)=1$;
consequently, by Proposition 3.1, $\E_{\sigma}\not=0$.
\end{proof}

\begin{prop}\label{proposition 3.7}  Suppose 
$\E_{\sigma}\not=0$, $(M\ N)\in SL_n(\Z)(M_{\sigma}\ I)\gamma$ where
$\gamma\in\Gamma_0(\stufe)$, and fix a prime $q$ so that $q\parallel\stufe$.
There are $E_0,E_1\in SL_n(\Z)$ so that
$$E_0ME_1\equiv\begin{pmatrix} M_1&0\\0&0\end{pmatrix}\ (q)$$ 
with $M_1$ invertible modulo $q$; 
for any such $E_0,E_1$ we have
$$E_0N\,^tE_1^{-1}\equiv\begin{pmatrix} N_1&N_2\\0&N_4\end{pmatrix}\ (q)$$ 
and 
$\chi_q(\gamma)=\chi_q(\det\overline M_1\cdot\det N_4).$
Further, for any $G\in GL_n(\Z)$, we have
$$\chi_q(GM,GN)=\chi_q(\det G)\chi_q(M,N)=\chi_q(MG^{-1},N\,^tG).$$
\end{prop}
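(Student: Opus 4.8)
The plan is to prove the three assertions in turn, using throughout that $\chi_q(\gamma)=\chi_q(\det D)$ for $\gamma=\begin{pmatrix}A&B\\C&D\end{pmatrix}\in\Gamma_0(\stufe)$, and then to package the second assertion as a reduction formula that drives the third. For the first assertion, since $q\parallel\stufe$ the ring $\F=\Z/q\Z$ is a field, $M$ has some $q$-rank $d$, and elementary row and column operations over $\F$ bring $M$ to the shape $\begin{pmatrix}M_1&0\\0&0\end{pmatrix}$ with $M_1$ a $d\times d$ invertible block; I would arrange these operations to have determinant $1$ over $\F$ (absorbing any scalar into $M_1$) and lift them via Lemma 6.1 to $E_0,E_1\in SL_n(\Z)$ with $E_0,E_1\equiv I\ (\stufe/q)$, so $\chi_q$ at the other primes is undisturbed. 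For the block form of $N$, note $M\,^tN$ is symmetric, hence so is $(E_0ME_1)\,^t(E_0N\,^tE_1^{-1})=E_0(M\,^tN)\,^tE_0$; writing $E_0N\,^tE_1^{-1}\equiv\begin{pmatrix}N_1&N_2\\N_3&N_4\end{pmatrix}\ (q)$ in $d,(n-d)$ blocks, the symmetry of the product $\begin{pmatrix}M_1\,^tN_1&M_1\,^tN_3\\0&0\end{pmatrix}$ forces $N_3\,^tM_1\equiv0\ (q)$, whence $N_3\equiv0$ since $M_1$ is invertible. Coprimality of $(M\ N)$ gives $\rank_q\!\big(E_0ME_1\ \ E_0N\,^tE_1^{-1}\big)=n$, which forces the $(n-d)\times(n-d)$ block $N_4$ to be invertible mod $q$.

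For the equality $\chi_q(\gamma)=\chi_q(\det\overline M_1\cdot\det N_4)$, since $\det E_0=\det E_1=1$ and $\begin{pmatrix}E_1&0\\0&\,^tE_1^{-1}\end{pmatrix}\in\Gamma_\infty^+$, replacing $(M\ N)$ by $E_0(M\ N)\begin{pmatrix}E_1&0\\0&\,^tE_1^{-1}\end{pmatrix}=(E_0G)(M_\sigma\ I)\big(\gamma\begin{pmatrix}E_1&0\\0&\,^tE_1^{-1}\end{pmatrix}\big)$ alters the $D$-block only by right multiplication by $\,^tE_1^{-1}$ (of determinant $1$), so it changes neither $\chi_q(\gamma)$ nor $\det M_1$ nor $\det N_4$; thus I may assume $M\equiv\begin{pmatrix}M_1&0\\0&0\end{pmatrix}$ and $N\equiv\begin{pmatrix}N_1&N_2\\0&N_4\end{pmatrix}\ (q)$. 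Now write $(M\ N)=G(M_\sigma\ I)\gamma$ with $G\in SL_n(\Z)$ and reduce mod $q$, where $C\equiv0$ and $M_\sigma\equiv\begin{pmatrix}I_d&0\\0&0\end{pmatrix}$. Matching $GM_\sigma A\equiv\begin{pmatrix}M_1&0\\0&0\end{pmatrix}$ yields $G_3\equiv0$, $A_2\equiv0$, $\det M_1\equiv\det G_1\det A_1$; matching $G(M_\sigma B+D)\equiv\begin{pmatrix}N_1&N_2\\0&N_4\end{pmatrix}$ yields $D_3\equiv0$ and $\det N_4\equiv\det G_4\det D_4$; and $A\,^tD\equiv I\ (q)$ gives $A_1\,^tD_1\equiv I_d$, so $\det D_1\equiv(\det A_1)^{-1}$. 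Combining these with $\det G_1\det G_4\equiv\det G=1$ and $\det D\equiv\det D_1\det D_4$ gives $\chi_q(\det\overline M_1\cdot\det N_4)=\chi_q(\det D)\,\chi_q(\det G_1)^{-2}$. The hard point is exactly this stray factor $\chi_q(\det G_1)^2$: it is trivial in the boundary cases $d=0$ and $d=n$ (where $G_1$ is $0\times0$, respectively equals $G$ with $\det G_1=1$), and for $0<d<n$ it is $1$ because Proposition 3.6(1) gives $\chi_q^2=1$ under the standing hypothesis $\E_\sigma\neq0$. Hence $\chi_q(\gamma)=\chi_q(\det\overline M_1\cdot\det N_4)$.

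For the third assertion I would exploit that this reduction formula is valid for every coprime symmetric pair of the given $q$-rank $d$, and that all three pairs share this $d$, since left multiplication by $G\in GL_n(\Z)$ and the substitution $M\mapsto MG^{-1}$ preserve $\rank_qM$. The identity $\chi_q(MG^{-1},N\,^tG)=\chi_q(\det G)\chi_q(M,N)$ is cleanest directly: $(MG^{-1}\ N\,^tG)=(M\ N)\begin{pmatrix}G^{-1}&0\\0&\,^tG\end{pmatrix}$ with $\begin{pmatrix}G^{-1}&0\\0&\,^tG\end{pmatrix}\in\Gamma_\infty\subseteq\Gamma_0(\stufe)$, so the new $\gamma$ has $D$-block $D\,^tG$ and $\chi_q(\det(D\,^tG))=\chi_q(\det G)\chi_q(\det D)$. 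For $\chi_q(GM,GN)=\chi_q(\det G)\chi_q(M,N)$, left multiplication by $SL_n(\Z)$ preserves both $\chi_q$ and $\det G$, so it suffices to treat $G=G_\pm$; reducing $G_\pm M$ by $SL_n(\Z)$-matrices negates one row of the normal form, flipping the sign of $\det\overline M_1$ when $d\ge1$ (or of $\det N_4$ when $d=0$), which contributes precisely $\chi_q(-1)=\chi_q(\det G_\pm)$. I expect the determinant bookkeeping of the second assertion, together with the clean isolation of the obstruction $\chi_q(\det G_1)^2$ and its removal via Proposition 3.6(1), to be the main obstacle; the remaining steps are formal.
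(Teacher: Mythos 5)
Your proposal is correct and follows essentially the same route as the paper's proof: the same normal form for $(M\ N)$ (with the lower-left block of $E_0N\,^tE_1^{-1}$ killed by the symmetry of $M\,^tN$ and $N_4$ invertible by coprimality), the same block-matching computation that isolates a stray factor which is a square of $\chi_q$ (your $\chi_q(\det G_1)^{2}$ is the paper's $\chi_q^2(\det E')$), removed via Proposition 3.6(1) when $0<d<n$ and trivially at $d\in\{0,n\}$, and the same handling of the last assertion with the $\det G=1$ and right-multiplication cases immediate. The only differences are cosmetic: the paper transforms the pair to $(M'\ I)\gamma'$ and reads off the blocks of $\gamma'\in\Gamma_0(\stufe)$ rather than matching $G(M_\sigma\ I)\gamma$ directly, and it treats $\det G=-1$ via $G_{\pm}(M_{\sigma}\ I)\equiv(M_{\sigma}\ I)\gamma_{\pm}\ (q)$ instead of negating a row of the normal form, but both mechanisms produce the same factor $\chi_q(-1)$.
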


\begin{proof}  By assumption, $(M\ N)=E(M_{\sigma}\ I)\gamma$ for some $E\in SL_n(\Z)$.
Set $d=\rank_qM_{\sigma}$.  If $d=0$ then $N\equiv ED\ (q)$ so $\chi_q(\gamma)=\chi_q(\det N).$
If $d=n$ then $M\equiv EA\equiv E\,^tD^{-1}\ (q)$ so $\chi_q(\gamma)=\chi_q(\det\overline M)$
(where $M\overline M\equiv I\ (q)$).

Suppose $0<d<n$.  By Proposition 3.5, we know $\rank_qM=\rank_qM_{\sigma}=d$, so
there are $E_0,E_1\in SL_n(\Z)$ so that $E_0ME_1\equiv\begin{pmatrix} M_1&0\\0&0\end{pmatrix}\ (q)$
with $M_1$ $d\times d$ and invertible modulo $q$.  Then by the symmetry of $M\,^tN$, we have
$E_0N\,^tE_1^{-1}\equiv \begin{pmatrix} N_1&N_2\\0&N_4\end{pmatrix}\ (q)$ with $N_1$ $d\times d$, and
$N_4$ invertible modulo $q$ since $(M,N)=1$.  
Set $E_2=E_0E$; given the shape of $M_{\sigma}$ and of
$E_0ME_1$, we must have $E_2\equiv\begin{pmatrix} E'&*\\0&E''\end{pmatrix}\ (q)$ with $E'$ $d\times d$
and invertible modulo $q$.
Hence
$$E_0(M\ N)\begin{pmatrix} E_1\\&^tE_1^{-1}\end{pmatrix} =E_2(M_{\sigma}\ I)\begin{pmatrix} ^tE_2\\&E_2^{-1}\end{pmatrix} \gamma'
=(M'\ I)\gamma'$$
where $$\gamma'=\begin{pmatrix} ^tE_2^{-1}\\&E_2\end{pmatrix}\gamma\begin{pmatrix} E_1\\&^tE_1^{-1}\end{pmatrix}\in\Gamma_0(\stufe)
\text{ and } M'\equiv\begin{pmatrix} E'\,^tE'\\&0\end{pmatrix}\ (q).$$
Write $\gamma'=\begin{pmatrix} A&B\\C&D\end{pmatrix}$, $A=\begin{pmatrix} A_1&A_2\\A_3&A_4\end{pmatrix}$, $D=\begin{pmatrix} D_1&D_2\\D_3&D_4\end{pmatrix}$
where $A_1, D_1$ are $d\times d$.  
Since $\begin{pmatrix} M_1\\&0\end{pmatrix}\equiv M'A\ (q)$, we have
$A_2\equiv 0\ (q)$, $A_1$ invertible modulo $q$, and $M_1\equiv E'\,^tE'A_1\ (q).$  
Then since $A\,^tD\equiv I\ (q)$, we have $D_3\equiv 0\ (q)$,
$A_1\,^tD_1\equiv I\ (q)$, $N_4\equiv D_4\ (q).$  
Thus 
$$\chi_q(\gamma)=\chi_q(\gamma')=\chi_q^2(E')\chi_q(\det\overline M_1\cdot\det N_4)$$
(where $M_1\overline M_1\equiv I\ (q)$).
Since $0<d<n$ and $\E_{\sigma}\not=0$, we know from Proposition 3.6 that $\chi_q^2=1$.

Suppose $(M\ N)=E(M_{\sigma}\ I)\gamma$ where $E\in SL_n(\Z)$, $\gamma\in\Gamma_0(\stufe)$;
take $G\in GL_n(\Z)$.  If $\det G=1$ then the above argument shows $\chi_q(GM,GN)=\chi_q(M,N)$.
Say $\det G=-1$; then $E'=GEG_{\pm}\in SL_n(\Z)$ and
$$G(M\ N)\equiv E'G_{\pm}(M_{\sigma}\ I)\gamma\equiv E'(M_{\sigma}\ I)\gamma_{\pm}\gamma\ (q).$$
Hence $\chi_q(GM,GN)=\chi_q(\gamma_{\pm}\gamma)=\chi_q(-1)\chi_q(M,N).$
Somewhat similarly,
$$(MG^{-1}\ \,N\,^tG)=E(M_{\sigma}\ I)\gamma\begin{pmatrix} G^{-1}\\&^tG\end{pmatrix},$$
so $\chi_q(MG^{-1},N\,^tG)=\chi_q\left(\gamma\begin{pmatrix} G^{-1}\\&^tG\end{pmatrix}\right)=\chi_q(\gamma)\chi_q(\det G).$
\end{proof}

\bigskip
\section{Hecke operators on Siegel Eisenstein series of square-free level}
\smallskip

Throughout this section, we assume $\stufe$ is square-free, $\chi$ is a character modulo $\stufe$
so that $\chi(-1)=(-1)^k$;
further, we assume either $\stufe>2$ or $k$ is even.

Let $\sigma$ be a ``multiplicative partition" of $\stufe$, meaning
$\sigma=(\stufe_0,\ldots,\stufe_n)$ where $\stufe_i\in\Z_+$ and $\stufe_0\cdots\stufe_n=\stufe$;
take $M_{\sigma}$ to be a diagonal $n\times n$ matrix so that for each $d$, $0\le d\le n$, 
we have $M_{\sigma}\equiv\begin{pmatrix} I_d\\&0\end{pmatrix}\ (\stufe_d)$.
By Proposition 3.1, as we vary $\sigma$,
the matrices $\gamma_{\sigma}=\begin{pmatrix} I&0\\M_{\sigma}&I\end{pmatrix}$ give us a set of representatives for
$\Gamma_{\infty}\backslash Sp_n(\Z)/\Gamma_0(\stufe)$, and by Proposition 3.5 
we have $\Gamma_{\infty}\gamma_{\sigma}\Gamma_0(\stufe)=\Gamma_{\infty}^+\gamma_{\sigma}\Gamma_0(\stufe)$.
Thus given any coprime symmetric pair $(M\ N)$, there is a unique multiplicative partition $\sigma$ of $\stufe$ so that
$(M\ N)\in SL_n(\Z)(M_{\sigma}\ I)\Gamma_0(\stufe).$

To ease notation, 
we write $\E_{\sigma}$ to denote $\E_{\gamma_{\sigma}}.$

\begin{thm}\label{Theorem 4.1}  
Fix a prime $q|\stufe$ and a multiplicative partition $$\sigma'=(\stufe'_0,\ldots,\stufe'_n)$$ of
$\stufe/q$; let $X_d=X_d(q)$ (as defined in \S 2).
For $0\le d\le n$, let 
$\sigma_{d}=(\stufe_0,\ldots,\stufe_n)$ where
$$\stufe_i=\begin{cases} \stufe_i'&\text{if $i\not=d$,}\\ q\stufe'_{d}&\text{if $i=d$.}\end{cases}$$
Then when $\E_{\sigma_d}\not=0$, we have
\begin{align*}
\E_{\sigma_d}|T(q)
&=q^{kd-d(d+1)/2}
\overline\chi_{\stufe/q}\left(qX_d^{-1} M_{\sigma_{d}},X_d^{-1}\right)\\
&\quad \cdot \sum_{t=0}^{n-d} q^{-dt-t(t+1)/2}
\bbeta_q(d+t,t)\sym_q^{\chi}(t)\E_{\sigma_{d+t}}
\end{align*}
(with 
$\sym_q^{\chi}(t)$ as defined in \S2).
\end{thm}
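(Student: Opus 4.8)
The strategy is to apply the explicit description of $T(q)$ for $q\mid\stufe$ from the Remark following Proposition~2.1, namely
$$\E_{\sigma_d}|T(q)=q^{n(k-n-1)/2}\sum_Y \E_{\sigma_d}\Big|\begin{pmatrix}\frac1q I_n&\frac1q Y\\&I_n\end{pmatrix},$$
with $Y$ running over $\Y_n(q)$, and to use the coprime-pair expansion of $\E_{\sigma_d}$ from Proposition~3.1(b), $\E_{\sigma_d}(\tau)=\sum_{(M\ N)}\overline\chi(M,N)\det(M\tau+N)^{-k}$. The core computation is to slash a single term $\det(M\tau+N)^{-k}$ by the slash matrix: since that matrix lies in $\Gamma_\infty$-type position, the slash sends the pair $(M\ N)$ to $(\frac1q M\ \ \frac1q(MY+N))$, or after clearing denominators to $(M\ \ MY+N)$ up to the relevant automorphy factor. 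So I would first record exactly how $T(q)$ transforms a coprime pair and track the weight factor $q^{n(k-n-1)/2}$ and the scalar $(\det\cdot)$ factors arising from $\begin{pmatrix}\frac1q I&\frac1q Y\\&I\end{pmatrix}$.

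\emph{Reorganizing the sum by the new partition.} After slashing, the resulting pairs $(M\ \ MY+N)$ need to be re-sorted into the double cosets $SL_n(\Z)(M_{\sigma_{d+t}}\ I)\Gamma_0(\stufe)$ indexed by the multiplicative partitions $\sigma_{d+t}$. By Proposition~3.5, for square-free level the double coset of a pair is determined by the $q$-ranks $\rank_q M$ (together with the unchanged data away from $q$). So the key is to compute how $\rank_q$ of the ``$M$-block'' of the new pair is distributed as $Y$ varies over $\Y_n(q)\pmod q$. Writing $M$ in the normal form dictated by $\sigma_d$ (where $M_{\sigma_d}\equiv\begin{pmatrix}I_d\\&0\end{pmatrix}\pmod q$), the pair gets sent to something whose $q$-rank jumps from $d$ to $d+t$ precisely according to the rank of a symmetric block of $Y$ in the lower-right $(n-d)$-corner; counting the number of $Y\pmod q$ producing each rank-$t$ configuration is exactly where $\bbeta_q(d+t,t)$ (counting $t$-dimensional subspaces) and $\sym_q^{\chi}(t)$ (the character-twisted count of invertible symmetric $t\times t$ matrices) enter. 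This is the combinatorial heart of the proof.

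\emph{Extracting the character and power of $q$.} Once the pairs are grouped, each group contributes a multiple of $\E_{\sigma_{d+t}}$, and I must pin down the scalar. The character values come from Proposition~3.7: applying $\chi_q$ to the transformed pair produces $\sym_q^\chi(t)$ after summing the relevant symmetric block over $\F_q$, while the character away from $q$ produces the prefactor $\overline\chi_{\stufe/q}(qX_d^{-1}M_{\sigma_d},X_d^{-1})$. The power of $q$ is assembled from three sources: the normalizing $q^{n(k-n-1)/2}$, the $\det$-factors from the slash matrix raised to the weight $k$, and the number of $Y$'s in each rank stratum; collecting these and simplifying should yield the stated $q^{kd-d(d+1)/2}\cdot q^{-dt-t(t+1)/2}$. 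I would verify the exponent by a careful bookkeeping of determinant scalings under $X_d$ and under clearing the factor $1/q$.

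\emph{Main obstacle.} The hardest step is the rank-stratification count: determining, for the transformed pair written in $\sigma_d$-normal form, exactly which entries of $Y\pmod q$ control the $q$-rank increase, and showing the combined count over each stratum factors as $\bbeta_q(d+t,t)\sym_q^\chi(t)$ with the correct character weighting. This requires choosing $SL_n(\Z)$-representatives carefully (invoking Lemma~6.1 so that auxiliary unimodular matrices are $\equiv I$ away from $q$, as in Propositions~3.4--3.5) so that the $\stufe/q$-part is untouched and only the behavior modulo $q$ matters, and then recognizing the symmetric-matrix sum as $\sym_q^\chi(t)$. Getting the power of $q$ to match after all the determinant and index bookkeeping is the secondary delicate point; I would cross-check it against the $\chi$ trivial, $\stufe=q$ prime case, where Corollary~4.3 states $|\lambda_{\sigma}(q)|=q^{kd-d(d+1)/2}$, to confirm the normalization.
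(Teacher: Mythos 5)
Your overall architecture does match the paper's: use the Remark after Proposition~2.1 to write $T(q)$ as a sum over $Y\in\Y_n(q)$, expand $\E_{\sigma_d}$ over coprime pairs via Proposition~3.1(b), re-sort the transformed pairs into the cosets $SL_n(\Z)(M_{\sigma_{d+t}}\ I)\Gamma_0(\stufe)$ using Proposition~3.5, and extract characters via Proposition~3.7. But the core computational step, as you describe it, is wrong in a way that would derail the whole count. First, slashing $\det(M\tau+N)^{-k}$ by $\begin{pmatrix}\frac1q I&\frac1q Y\\&I\end{pmatrix}$ sends $(M\ N)$ to $\left(\frac1q M\ \ \frac1q MY+N\right)$, i.e.\ to $(M\ \ MY+qN)$ after scaling --- not to $\left(\frac1q M\ \ \frac1q(MY+N)\right)=(M\ \ MY+N)$ as you write. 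The factor $q$ on $N$ is essential: $(M\ \ MY+N)=(M\ N)\begin{pmatrix}I&Y\\0&I\end{pmatrix}$ lies in the \emph{same} double coset as $(M\ N)$, so your version would make $\E_{\sigma_d}|T(q)$ a multiple of $\E_{\sigma_d}$ alone, with no higher-rank terms, contradicting the statement being proved. With the correct pair, one chooses the representative so that $q$ divides the lower $n-d$ rows of $M$ and passes to $(M'\ N')=\frac1q X_d(M\ \ MY+qN)$ to restore integrality and coprimality.

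Second, and decisively, the new first member $M'=X_dM/q$ does not involve $Y$ at all, so the $q$-rank jump from $d$ to $d+t$ is \emph{not} ``controlled by the rank of a symmetric block of $Y$.'' It is governed by the lower $n-d$ rows of $M/q$, i.e.\ by which representative of the old coset one has (equivalently, in the paper's reversed count, by the coset $E\in\K_d(q)\backslash SL_n(\Z)$), and Lemma~6.4(a) counts these, producing the factor $\bbeta_q(d+t,t)$ times a power of $q$. The matrix $Y$ enters only through the integrality of $N'$ and the coprimality condition, which forces a symmetric $t\times t$ block (the paper's $E_3-Y_4$) to be invertible mod $q$; summing $\chi_q(\det(E_3-Y_4))$ over that block is what produces $\sym_q^{\chi}(t)$, while the unconstrained blocks of $Y$ contribute the remaining powers of $q$. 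So your ``combinatorial heart'' attributes both factors to the wrong source. Relatedly, you never articulate the step that makes the re-sorting tractable: since $\E_{\sigma_d}|T(q)$ is a priori a linear combination of the $\E_{\sigma_{d'}}$, it suffices to compute, for the single pair $(M_{\sigma_{d'}}\ I)$, the character-weighted number of preimages $(E,Y)$ (the paper's coefficient $c_d(M_{\sigma_{d'}},I)$); without this reversal the forward image has no clean parametrization by cosets, since distinct $(M\ N,Y)$ produce pairs in the same class.
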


\begin{proof}  To ease notation further, temporarily
write $\E_{d'}$ for $\E_{\sigma_{d'}}$ and $M_{d'}$ for $M_{\sigma_{d'}}$.  Also,
write $\K_d$ for $\K_d(q)$, $\Y_n$ for $\Y_n(q)$, $X_r$ for $X_r(q)$,
$\bbeta(m,r)$ for $\bbeta_q(m,r).$

By Proposition 2.1, we have
$$\E_{d}(\tau)|T(q)
=q^{-n(n+1)/2} \sum_{M,N,Y} \overline\chi(M,N)\,\det(M\tau/q+MY/q+N)^{-k}$$
where $SL_n(\Z)(M\ N)$ varies over $SL_n(\Z)(M_d\ I)\Gamma_0(\stufe)$ and
$Y$ varies over $\Y_n$; recall that we can take $Y\equiv 0\ (\stufe/q)$.
(Note that in Proposition 2.1, when $p|\stufe$ we have $\chi(p^{n-r})=0$ unless $r=n$.)
Using left multiplication from $SL_n(\Z)$ to adjust each representative $(M\ N)$, we can
assume $q$ divides the lower $n-d$ rows of $M$. 
Set $$(M'\ N')=X_d (M/q\ MY/q+N)
=\frac{1}{q} X_d (M\ MY+qN);$$
clearly $M', N'$ are integral given our assumption that $q$ divides the lower $n-d$ rows of $M$.
We know the upper $d$ rows of $M$ are linearly independent modulo $q$, as are the lower $n-d$ rows of $N$.
Thus $(M',N')=1$, and with $d'=\rank_qM'$, we have $d'\ge d$.  Since $\rank_{q'}M_{d'}=\rank_{q'}M_d$ for all primes $q'|\stufe/q$,
by Proposition 3.5 we have $(M'\ N')\in SL_n(\Z)(M_{d'}\ I)\Gamma_0(\stufe).$
Also, we have
$$\det(M\tau/q+MY/q+N)^{-k}=q^{kd}\det(M'\tau+N')^{-k}.$$

Reversing, given $(M'\ N')\in SL_n(\Z)(M_{d'}\ I)\Gamma_0(\stufe)$ (with $d'\ge d$), we need to identify the equivalence classes
$SL_n(\Z)(M\ N)\in SL_n(\Z)(M_d\ I)\Gamma_0(\stufe)$ and $Y\in\Y_n$ so that
$$\frac{1}{q}X_d(M\ \,MY+N)\in SL_n(\Z)(M'\ N').$$
Equivalently, we need to identify $Y\in \Y_n$ and the equivalence classes
$$SL_n(\Z)qX_d^{-1}E(M'\ (N'-M'Y)/q)\in SL_n(\Z)(M_d\ I)\Gamma_0(\stufe)$$
where $E\in SL_n(\Z)$ and $(M'\ N')$ is a coprime symmetric pair.  
For $E\in SL_n(\Z)$, we have $X_d^{-1}EX_d\in SL_n(\Z)$ if and only if $E\in\K_d$;
thus we need to identify $Y\in\Y_n$ and $E\in\K_d\backslash SL_n(\Z)$ so that
$$qX_d^{-1}E(M'\ (N'-M'Y)/q)$$
is an integral, coprime pair with $\rank_q qX_d^{-1}EM'=d$
(that $M\,^tN$ is symmetric is automatic).
For each coprime symmetric pair $(M'\ N')$,
let $\calC_d(M',N')$ be the set of all  pairs $(E,Y)$
that meet the above criteria (note that $\calC_d(M',N')$ could be empty); then
$$\E_d(\tau)|T(q)=q^{kd-n(n+1)/2}\sum_{(M',N')}c_d(M',N')\,\det(M'\tau+N')^{-k}$$
where $$c_d(M',N')=\sum_{E,Y}\overline\chi(qX_d^{-1}EM',X_d^{-1}E(N'-M'Y)),$$
with the sum over all $(E,Y)\in\calC_d(M',N')$.

We also know that $\Eis_k^{(n)}(\stufe,\chi)$ is equal to
$$\spn\{(C\tau+D)^{-k}:\ (C\ D) \text{ coprime, symmetric }\}\cap\M_k^{(n)}(\stufe,\chi),$$
and $\M_k^{(n)}(\stufe,\chi)$ is invariant under the Hecke operators.  Hence $\E_d|T(q)$ is
again an Eisenstein series, and so the above discussion shows that
$$\E_d|T(q)=q^{kd-n(n+1)/2}\sum_{d'\ge d}c_d(M_{d'},I)\E_{d'}.$$  Thus we need to compute
$c_d(M_{d'},I)$ for each $d'\ge d$.

Fix $d'\ge d$, and 
choose $E\in\K_d\backslash SL_n(\Z)$; note that we can choose $E\equiv I\ (\stufe/q).$
With $Y\in\Y_n$, set
$$(M\ N)=qX_d^{-1}E(M_{d'}\ (I-M_{d'}Y)/q).$$
To have $\rank_qM=d$, we need the top $d$ rows of $EM_{d'}$ to have $q$-rank $d$; by Lemma 6.4(a),
the number of such $E$ is $q^{d(n-d')}\bbeta(d',d).$  
Also, since $M_{d'}\equiv\begin{pmatrix} I_{d'}\\&0\end{pmatrix}\ (q)$,
the upper left $d\times d'$ block of $E$ must have $q$-rank $d$; thus using
left multiplication from $\K_d$, we can assume $E=\begin{pmatrix} E'&W\\0&I\end{pmatrix}$ where $E'\in SL_{d'}(\Z).$
(Note that we can still assume that $E\equiv I\ (\stufe/q)$.)
So fix such $E$ (and thus fix $M$).  Set $G=\begin{pmatrix}E'\\&I\end{pmatrix}$;
so $\begin{pmatrix} G^{-1}\\&^tG\end{pmatrix}\in\Gamma_0(\stufe)$.  We know $N$
is integral if and only if $EN\,^tG=X_d^{-1}(E\,^tG-EM_{d'}G^{-1}\cdot GY\,^tG)$ is integral; also, when $N$ is integral, $(M,N)=1$ if and only if
$(MG^{-1},N\,^tG)=1$.  
Write $E'\,^tE'=\begin{pmatrix} E_1&E_2\\^tE_2&E_3\end{pmatrix}$, $W=\begin{pmatrix} W_1\\W_2\end{pmatrix}$,
$$G Y\,^tG=\begin{pmatrix} Y_1&Y_2&Y_3\\^tY_2&Y_4&Y_5\\^tY_3&^tY_5&Y_6\end{pmatrix}$$
where $E_1,Y_1$ are $d\times d$ and symmetric, $E_3,Y_4$ are $(d'-d)\times (d'-d)$ (and symmetric), and $W_1$ is $d\times(n-d')$.
We have $EM_{d'}G^{-1}\equiv \begin{pmatrix} I_{d'}\\&0\end{pmatrix}\ (q)$, so $N\,^tG$ is integral if and only if $(Y_1\ Y_2\ Y_3)\equiv(E_1\ E_2\ W_1)\ (q).$
When $N\,^tG$ is integral, we have
$$N\,^tG\equiv\begin{pmatrix} (E_1-Y_1)/q&(E_2-Y_2)/q&(W_1-Y_3)/q\\ 0&E_3-Y_4&W_2-Y_5\\0&0&I\end{pmatrix}\ (q),$$
so $(MG^{-1},N\,^tG)=1$ if and only if $\rank_q(E_3-Y_4)=d'-d.$
As $Y_4$ varies over symmetric $(d'-d)\times(d'-d)$ matrices modulo $q$,
so does $E_3-Y_4$.  
Recall that we can choose $Y\equiv0\ (\stufe/q)$; thus for $E,Y$ as above, we have $M_{d'}\equiv M_d\ (\stufe/q)$, so
$$\chi_{\stufe/q}(M,N)
= \chi_{\stufe/q}(qX_d^{-1}M_d,X_d^{-1})=\overline\chi_{\stufe/q}(\overline q X_dM_d,X_d)$$
and $$\chi_q(M,N)=\chi_q(MG^{-1},N\,^tG)=\chi_q(\det(E_3-Y_4)).$$
Since $Y_5,Y_6$ are unconstrained modulo $q$, 
$$c_d(M_{d'},I)=q^{d(n-d')+(n-d')(d'-d)+(n-d')(n-d'+1)/2}\bbeta(d',d)\sym_q^{\chi}(d'-d)$$
(recall $Y_6$ is symmetric).
Collecting terms and setting $t=d'-d$ yields the result.
\end{proof}

To help us diagonalize the space Eisenstein series of square-free level,
we put a partial ordering on $\{\sigma\}$, the multiplicative partitions of $\stufe$, 
as follows.  

\smallskip\noindent
{\bf Definition}  
Let $\sigma,\alpha$ be multiplicative partitions of $\stufe$, and let $q$ be a prime dividing $\stufe$.
We write $\sigma<\alpha\ (q)$ if $\rank_qM_{\sigma}<\rank_qM_{\alpha},$
$\sigma=\alpha\ (q)$ if $\rank_qM_{\sigma}=\rank_qM_{\alpha},$ and
$\sigma\le\alpha\ (q)$ if $\rank_qM_{\sigma}\le\rank_qM_{\alpha}.$
For $Q|\stufe$, we write $\sigma<\alpha\ (Q)$ if $\rank_qM_{\sigma}<\rank_qM_{\alpha}$ for all primes $q|Q$,
$\sigma=\alpha\ (Q)$ if $\rank_qM_{\sigma}=\rank_qM_{\alpha}$ for all primes $q|Q$,
$\sigma\le\alpha\ (Q)$ if $\rank_qM_{\sigma}\le\rank_qM_{\alpha}$ for all primes $q|Q$,

We first determine how to find eigenforms for $T(q)$.

\begin{cor}\label{Corollary 4.2}  
Suppose $\sigma$ is a  multiplicative partition of $\stufe$
so that $\E_{\sigma}\not=0$, and let $q$ be a prime dividing $\stufe$.  
For partitions $\alpha$ of $\stufe$ with $\alpha=\sigma\ (\stufe/q)$, $\alpha>\sigma\ (q)$, there are
$a_{\sigma,\alpha}(q)\in\C$ so that
$$\E_{\sigma}+\sum_{\substack{\alpha=\sigma\,(\stufe/q) \\ \alpha>\sigma\,(q)}}  a_{\sigma,\alpha}(q)\,\E_{\alpha}$$
is an eigenform for $T(q)$,
and  $a_{\sigma,\alpha}(q)\not=0$ only if either
(1) $\chi_q=1$, or (2) $\chi_q^2=1$ and $\rank_qM_{\alpha}-\rank_qM_{\sigma}$ is even.
With such $a_{\sigma,\alpha}$ and $d=\rank_qM_{\sigma}$, the eigenvalue of 
$\E_{\sigma}+\sum_{\substack{\alpha=\sigma\,(\stufe/q) \\ \alpha>\sigma\,(q)}}  a_{\sigma,\alpha}(q)\,\E_{\alpha}$
is
$$\lambda_{\sigma}(q)=q^{kd-d(d+1)/2} 
\chi_{\stufe/q}(\overline q X_dM_{\sigma}, X_d)$$
where $q\overline q\equiv 1\ (\stufe/q)$.
\end{cor}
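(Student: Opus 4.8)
The plan is to recognize this as an exercise in triangular linear algebra driven entirely by Theorem 4.1. Fix $\sigma$ with $\E_\sigma\neq 0$ and a prime $q\mid\stufe$, and set $d_0=\rank_q M_\sigma$. Because $\stufe$ is square-free, the partitions $\alpha$ with $\alpha=\sigma\ (\stufe/q)$ are exactly those sharing the $\stufe/q$-part of $\sigma$; writing $\sigma'=(\stufe'_0,\ldots,\stufe'_n)$ for that common reduction, these $\alpha$ are precisely the $\sigma_{d'}$ of Theorem 4.1 for $0\le d'\le n$, indexed by $d'=\rank_q M_{\sigma_{d'}}$, and $\alpha>\sigma\ (q)$ means $d'>d_0$. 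First I would restrict attention to the finite-dimensional subspace $\spn\{\E_{\sigma_{d'}}:\E_{\sigma_{d'}}\neq 0\}$. Theorem 4.1 says $\E_{\sigma_{d'}}|T(q)$ is a combination of $\E_{\sigma_{d''}}$ with $d''\ge d'$, so the matrix $A=(A_{d'',d'})$ of $T(q)$ on this subspace is upper-triangular with respect to the ordering by $q$-rank.

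Next I would read off the diagonal entries. The $t=0$ term of Theorem 4.1 uses $\bbeta_q(d',0)=\sym_q^\chi(0)=1$, so $A_{d',d'}=q^{kd'-d'(d'+1)/2}\,\overline\chi_{\stufe/q}(qX_{d'}^{-1}M_{\sigma_{d'}},X_{d'}^{-1})$; the conjugation identity established in the proof of Theorem 4.1 rewrites this as $q^{kd'-d'(d'+1)/2}\chi_{\stufe/q}(\overline q X_{d'}M_{\sigma_{d'}},X_{d'})$, which at $d'=d_0$ is exactly the claimed $\lambda_\sigma(q)$. The crucial observation is that these diagonal entries are pairwise distinct: since absolute convergence forces $k>n+1$, the exponent $kd'-d'(d'+1)/2$ is strictly increasing in $d'$ for $0\le d'\le n$ (its increment from $d'$ to $d'+1$ is $k-d'-1>0$), so $|A_{d',d'}|=q^{kd'-d'(d'+1)/2}$ strictly increases with $d'$. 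I would then solve the eigenvalue equation for a coefficient vector $(v_{d'})$ with $v_{d_0}=1$, $v_{d'}=0$ for $d'<d_0$, and eigenvalue $\lambda=A_{d_0,d_0}=\lambda_\sigma(q)$: collecting the coefficient of each $\E_{\sigma_{d'}}$ reduces the equation to $\sum_{d\le d'}A_{d',d}v_d=\lambda v_{d'}$, which for $d'>d_0$ becomes $(A_{d',d'}-\lambda)v_{d'}=-\sum_{d_0\le d<d'}A_{d',d}v_d$ and determines $v_{d'}$ uniquely by back-substitution, the denominator being nonzero by distinctness. Setting $a_{\sigma,\alpha}(q)=v_{d'}$ for $\alpha=\sigma_{d'}$ produces the desired eigenform.

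Finally I would establish the vanishing claim on the $a_{\sigma,\alpha}(q)$. The off-diagonal entry $A_{d',d}$ (with $d'>d$ and $t=d'-d$) carries the factor $\sym_q^\chi(t)$, and the explicit formulas of \S2 give $\sym_q^\chi(t)=0$ unless either $\chi_q=1$ or ($\chi_q^2=1$ and $t$ is even). A straightforward induction on the back-substitution then shows $v_{d'}\neq 0$ only when a chain of nonvanishing off-diagonal entries links $d_0$ to $d'$; when $\chi_q\neq 1$ each link of such a chain must advance the $q$-rank by an even amount, forcing $d'-d_0=\rank_q M_\alpha-\rank_q M_\sigma$ to be even, which is case (2). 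I expect the only real care to be in matching the $t=0$ coefficient to the stated eigenvalue via the character conjugation identity and in handling towers in which some $\E_{\sigma_{d'}}$ vanish (these are simply omitted from the basis, leaving the triangular structure and the distinctness argument intact); the genuinely load-bearing step is the strict monotonicity of $kd'-d'(d'+1)/2$, without which the triangular system need not be solvable for the prescribed eigenvalue.
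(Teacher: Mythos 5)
Your proposal is correct and follows essentially the same route as the paper: Theorem 4.1 gives upper triangularity of $T(q)$ on the span of the relevant $\E_{\sigma_{d'}}$, the vanishing pattern of $\sym_q^{\chi}(t)$ (Lemma 6.6) forces the constraint on the $a_{\sigma,\alpha}(q)$, and back-substitution produces the eigenform with eigenvalue equal to the $t=0$ diagonal entry, rewritten via the conjugation identity from the proof of Theorem 4.1. The paper compresses all of this into ``the standard process of diagonalizing an upper triangular matrix''; your explicit verification that the diagonal entries $q^{kd'-d'(d'+1)/2}$ are pairwise distinct because $k>n+1$ is precisely the load-bearing detail that justifies that phrase.
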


\begin{proof}  
By Lemma 6.6
$\sym_q^{\chi}(t)=0$ if and only if (1) $\chi_q=1$, or (2) $\chi_q^2=1$ and $t$ is even.
Thus by Theorem 4.1, the subspace
\begin{align*}
\spn\Big\{\E_{\alpha}:\ \alpha
&=\sigma\ (\stufe/q),\ \alpha\ge\sigma\ (q),\ \E_{\alpha}\not=0,
\text{ and either } 
(1)\ \chi_q=1,\\
& \text{ or }  (2)\ \chi_q^2=1 \text{ and }\rank_qM_{\alpha}-\rank_qM_{\sigma} 
\text{ is even }\Big\}
\end{align*}
is invariant under $T(q)$, and the matrix for $T(q)$ on this subspace basis
(ordered with $\rank_qM_{\alpha}$ increasing) is upper triangular with 
diagonal entries $\lambda_{\alpha}(q)$.  Then the standard process of diagonalizing an upper triangular matrix yields
the result.  
\end{proof}

We now diagonalize the space of Eisenstein series with respect to
$$\{T(q):\ q \text{ prime, } q|\stufe\ \}$$ 
and obtain a multiplicity-one result for the Eisenstein series of square-free level.

\begin{cor}\label{Corollary 4.3}  Suppose $\sigma$ a multiplicative partition of $\stufe$ so that $\E_{\sigma}\not=0$.
For a prime $q|\stufe$ and 
$\alpha$ a multiplicative partition of $\stufe$ with
$\alpha\ge\sigma\ (\stufe)$, set 
$a_{\sigma,\alpha}(q)=1$ if $\alpha=\sigma\ (q)$, and otherwise set
$a_{\sigma,\alpha}(q)=a_{\rho,\alpha}(q)$
where $\rho$ is a multiplicative partition of $\stufe$ with
$\rho=\alpha\ (\stufe/q)$, $\rho=\sigma\ (q)$, and $a_{\rho,\alpha}(q)$ is as in Corollary 4.2.
For $Q|\stufe$ and $\alpha\ge\sigma\ (Q),$
set
$$a_{\sigma,\alpha}(Q)=\prod_{\substack{q|Q\\q \text{ prime}}} a_{\sigma,\alpha}(q).$$
Then with $$\widetilde \E_{\sigma}=\sum_{\alpha\ge\sigma\ (\stufe)} a_{\sigma,\alpha}(\stufe)\E_{\alpha},$$
for every prime $q|\stufe$ we have $\widetilde\E_{\sigma}|T(q)=\lambda_{\sigma}(q)\widetilde\E_{\sigma}$
(where $\lambda_{\sigma}(q)$ is as in Corollary 4.2).  Further, for $\sigma\not=\rho\ (\stufe)$, there is some
prime $q|\stufe$ so that $\lambda_{\sigma}(q)\not=\lambda_{\rho}(q).$
\end{cor}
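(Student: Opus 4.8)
The statement has two independent parts: that each $\widetilde\E_\sigma$ is a simultaneous eigenform for $\{T(q):q\mid\stufe\}$ with the prescribed eigenvalue $\lambda_\sigma(q)$, and the multiplicity-one separation of distinct partitions. I would dispose of the separation first, as it is the shorter half. If $\sigma\not=\rho\ (\stufe)$ then, since a multiplicative partition of the square-free $\stufe$ is determined by the tuple $(\rank_qM_\sigma)_{q\mid\stufe}$, there is a prime $q\mid\stufe$ with $d:=\rank_qM_\sigma\not=\rank_qM_\rho$. By Corollary 4.2 the character factor in $\lambda_\sigma(q)$ is a root of unity, so $|\lambda_\sigma(q)|=q^{kd-d(d+1)/2}$; the exponent $f(d)=kd-d(d+1)/2$ satisfies $f(d+1)-f(d)=k-(d+1)>0$ for $0\le d\le n-1$ because $k>n+1$, hence $f$ is strictly increasing on $\{0,\dots,n\}$. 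Thus $|\lambda_\sigma(q)|\not=|\lambda_\rho(q)|$, and in particular $\lambda_\sigma(q)\not=\lambda_\rho(q)$, giving the separation.

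For the eigenform property it suffices to fix one prime $q_0\mid\stufe$ and prove $\widetilde\E_\sigma|T(q_0)=\lambda_\sigma(q_0)\widetilde\E_\sigma$. The structural input is that $T(q_0)$ alters only the $q_0$-rank: by Theorem 4.1, $\E_\alpha|T(q_0)$ is a combination of $\E_{\alpha'}$ with $\alpha'=\alpha\ (\stufe/q_0)$ and $\alpha'\ge\alpha\ (q_0)$. I would group $\widetilde\E_\sigma=\sum_{\alpha\ge\sigma}a_{\sigma,\alpha}(\stufe)\E_\alpha$ into fibers indexed by the rank profile $\vec f=(\rank_qM_\alpha)_{q\not=q_0}$ away from $q_0$, writing $\alpha^{\vec f}_e$ for the partition in the fiber with $\rank_{q_0}M_\alpha=e$. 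Rewriting Theorem 4.1 as
$$\E_{\alpha^{\vec f}_e}|T(q_0)=\lambda_{\alpha^{\vec f}_e}(q_0)\sum_{t\ge0}\rho_{e,t}\,\E_{\alpha^{\vec f}_{e+t}},\qquad \rho_{e,t}=q_0^{-et-t(t+1)/2}\bbeta_{q_0}(e+t,t)\sym_{q_0}^\chi(t),$$
isolates the eigenvalue $\lambda_{\alpha^{\vec f}_e}(q_0)$ (the $t=0$ term, with $\rho_{e,0}=1$) from a rank-only unipotent factor $\rho_{e,t}$ that is independent of the fiber $\vec f$. On each fiber the matrix of $T(q_0)$ is thus $\diag(\lambda_{\alpha^{\vec f}_e}(q_0))$ times this universal unipotent matrix, and the defining recursion for $a_{\sigma,\alpha^{\vec f}_e}(q_0)$ (base point $\rho=\alpha\ (\stufe/q_0)$, $\rho=\sigma\ (q_0)$) is exactly the Corollary 4.2 left-eigenvector recursion on this fiber.

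Collecting the coefficient of a fixed $\E_\beta$ reduces the eigenform identity to a per-fiber equation: with $c_q(e)=a_{\sigma,\alpha^{\vec f}_e}(q)$ for $q\not=q_0$, $b_e=a_{\sigma,\alpha^{\vec f}_e}(q_0)$, and $d_0=\rank_{q_0}M_\sigma$, I must show
$$\sum_{d_0\le e\le e'}b_e\Big(\prod_{q\not=q_0}c_q(e)\Big)\lambda_{\alpha^{\vec f}_e}(q_0)\,\rho_{e,e'-e}=\lambda_\sigma(q_0)\,b_{e'}\prod_{q\not=q_0}c_q(e').$$
Were $\prod_{q\not=q_0}c_q(e)$ constant in $e$, this would follow immediately from the Corollary 4.2 eigenvector equation $\sum_e b_e\lambda_{\alpha^{\vec f}_e}(q_0)\rho_{e,e'-e}=\lambda_{\sigma^{\vec f}}(q_0)\,b_{e'}$. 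The difficulty, which I expect to be the main obstacle, is twofold: the $c_q(e)$ genuinely depend on $e$, since the $q$-chain eigenvalues entering Corollary 4.2 carry a $\chi_{q_0}$-factor that sees the $q_0$-rank; and the fiber base eigenvalue $\lambda_{\sigma^{\vec f}}(q_0)$ differs from the target $\lambda_\sigma(q_0)$ by a character phase (their magnitudes agree, both being $q_0^{f(d_0)}$). To resolve this I would make the character dependence explicit via Proposition 3.7 and the diagonal shape of $M_\sigma$: for a diagonal coprime pair one computes $\chi_{q'}(\overline qX_dM_\sigma,X_d)$ as an $e$-independent constant times a power $\chi_{q'}(q_0)^{\,e-2|U\cap\{\le e\}|}$ (with $U$ the set of $q'$-unit positions), and symmetrically for the $\chi_{q_0}$-factors in each $c_q(e)$; the plan is to show these explicit phases make the $e$-dependence of $\prod_{q\not=q_0}c_q(e)\,\lambda_{\alpha^{\vec f}_e}(q_0)$ collapse to $\lambda_\sigma(q_0)$ times its value at $e=d_0$, so the displayed identity telescopes back to the fiberwise Corollary 4.2 equation.

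An alternative that sidesteps the phase bookkeeping is the abstract route: $T(q_0)$ and $T(q)$ for distinct primes commute, each is diagonalizable on $\Eis_k^{(n)}(\stufe,\chi)$ (on each fiber the diagonal entries have distinct magnitudes by the monotonicity of $f$ above), and the whole family is triangular for the partial order; hence there is a \emph{unique} simultaneous eigenform with leading term $\E_\sigma$, which one then identifies with $\widetilde\E_\sigma$ by matching the leading coefficient $a_{\sigma,\sigma}(\stufe)=1$. Either way, the genuine content is that the product coefficients $a_{\sigma,\alpha}(\stufe)=\prod_q a_{\sigma,\alpha}(q)$ are compatible across primes, and verifying this compatibility — equivalently, the cancellation of the cross-prime character phases in the displayed identity — is the crux of the argument.
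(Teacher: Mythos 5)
Your multiplicity-one half is complete and is exactly the paper's argument: the character factor in $\lambda_\sigma(q)$ is a root of unity, and $d\mapsto kd-d(d+1)/2$ is strictly increasing on $\{0,\dots,n\}$ since $k>n+1$. Your setup for the eigenform half is also the paper's: your fibers are precisely the paper's grouping of $\widetilde\E_\sigma$ into $q_0$-chains (the paper's $\beta$'s are your base points $\alpha^{\vec f}_{d_0}$, its inner sums $\sum_\alpha a_{\beta,\alpha}(q_0)\E_\alpha$ are your $\sum_e b_e\E_{\alpha^{\vec f}_e}$), your factorization of Theorem 4.1 into eigenvalue times a fiber-independent unipotent part is correct, and your per-fiber identity is the right reduction.

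However, the proposal stops at the crux rather than proving it: the cross-prime phase compatibility is presented as a ``plan'' and an ``expectation,'' and neither of your two routes executes it (you concede this yourself in the last sentence). The missing ingredient is the vanishing clause of Corollary 4.2 — $a_{\beta,\alpha}(q)\not=0$ only if $\chi_q=1$, or $\chi_q^2=1$ with $\rank_qM_\alpha-\rank_qM_\beta$ even — which traces back to Lemma 6.6. This clause is what makes the phases collapse, and without it they genuinely do not: the $e$-dependence you identify is real off the support of the coefficients, so no amount of bookkeeping will cancel it for general $\chi$; the cancellation happens only on the support, and the support is controlled exactly by that clause. Concretely, the paper finishes in a few lines: if $a_{\sigma,\beta}(\stufe/q_0)\not=0$ and $\beta>\sigma\ (q')$ for some prime $q'\mid\stufe/q_0$, then either $\chi_{q'}=1$, in which case the $q'$-factors of $\lambda_\beta(q_0)$ and $\lambda_\sigma(q_0)$ are both $1$; or $\chi_{q'}^2=1$ with $\rank_{q'}M_\beta\equiv\rank_{q'}M_\sigma\ (\mathrm{mod}\ 2)$, in which case Proposition 3.7 evaluates both $q'$-factors as powers of the quadratic character $\chi_{q'}$ whose exponents differ by an even integer, hence they are equal. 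Therefore $\lambda_\beta(q_0)=\lambda_\sigma(q_0)$ \emph{exactly} (not merely in absolute value) on every contributing fiber, and the eigenform property follows at once from Corollary 4.2 applied fiberwise — no telescoping computation is needed. The same parity observation, with the roles of $q_0$ and $q'$ exchanged, is also what legitimizes the step you flag as the ``compatibility'' of the product coefficients, namely that $\prod_{q\not=q_0}c_q(e)$ is constant in $e$ on the support of $b_e$, equivalently $a_{\sigma,\alpha}(\stufe)=a_{\sigma,\beta}(\stufe/q_0)\,a_{\beta,\alpha}(q_0)$; so the one idea you are missing closes both holes simultaneously.
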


\begin{proof}  Fix a prime $q|\stufe$.  For $\alpha, \beta$ multiplicative partitions of $\stufe$ with
$\alpha\ge\sigma\ (\stufe)$,  $\beta=\alpha\ (\stufe/q)$, and
$\beta=\sigma\ (q)$, we have $a_{\sigma,\alpha}(\stufe)=a_{\sigma,\beta}(\stufe/q)a_{\beta,\alpha}(q).$
Thus, varying $\beta,\alpha$ so that $\beta\ge\sigma\ (\stufe/q)$, $\beta=\sigma\ (q)$,
$\alpha=\beta\ (\stufe/q)$, $\alpha\ge \beta\ (q),$ we have
$$\widetilde\E_{\sigma}=\sum_{\beta}a_{\sigma,\beta}(\stufe/q)\sum_{\alpha}a_{\beta,\alpha}(q)\E_{\alpha}.$$
By Corollary 4.2,
$$\sum_{\alpha}a_{\beta,\alpha}(q)\E_{\alpha}|T(q)=
\lambda_{\beta}(q)\sum_{\alpha}a_{\beta,\alpha}(q)\E_{\alpha}.$$
So to show $\widetilde\E_{\sigma}|T(q)=\lambda_{\sigma}(q)\widetilde\E_{\sigma},$
we need to show that $\lambda_{\beta}(q)=\lambda_{\sigma}(q)$ for any $\beta$ so that
$\beta\ge \sigma\ (\stufe/q)$, $\beta=\sigma\ (q),$ and $a_{\sigma,\beta}(\stufe/q)\not=0.$
Equivalently, we need to show that for $\beta\ge \sigma\ (\stufe/q)$, $\beta=\sigma\ (q)$ with
$a_{\sigma,\beta}(\stufe/q)\not=0$, we have
$$\chi_{q'}\left(\overline q X_d M_{\beta},X_d\right)
=\chi_{q'}\left(\overline q X_d M_{\sigma},X_d\right)$$
for all primes $q'|\stufe/q$ (where $q\overline q\equiv 1\ (\stufe/q)$).

Let $d=\rank_qM_{\sigma}$, and fix $\beta$ so that $\beta\ge\sigma\ (\stufe/q)$, 
$\beta=\sigma\ (q),$ and $a_{\sigma,\beta}(\stufe/q)\not=0.$
Let $q'$ be a prime dividing $\stufe/q$.  If $\beta=\sigma\ (q')$, then
$M_{\beta}\equiv M_{\sigma}\ (q')$ and so
$$\chi_{q'}\left(\overline q X_d M_{\beta},X_d\right)
=\chi_{q'}\left(\overline q X_d M_{\sigma},X_d\right).$$
So suppose $\beta>\sigma\ (q').$  Since $a_{\sigma,\beta}(\stufe/q)\not=0$, by Corollary 5.3
we either have $\chi_{q'}=1$, or $\chi_{q'}^2=1$ with $\rank_{q'}M_{\beta}$, $\rank_{q'}M_{\sigma}$ of the same parity.
Consequently (using Proposition 3.7),
$$\chi_{q'}\left(\overline q X_d M_{\beta},X_d\right)
=\chi_{q'}\left(\overline q X_d M_{\sigma},X_d\right).$$
Hence
$\widetilde\E_{\sigma}|T(q)=\lambda_{\sigma}(q)\widetilde\E_{\sigma}$, proving
the first part of the corollary.

To prove the second part, suppose now that $\sigma\not=\rho\ (\stufe).$  Thus for some prime $q|\stufe$,
we have $d=\rank_qM_{\sigma}\not=\rank_qM_{\rho}=d'$.  Then
$$|\lambda_{\sigma}(q)|=q^{kd-d(d+1)/2}\not=q^{kd'-d'(d'+1)/2}=|\lambda_{\rho}(q)|,$$
since $0\le d,d'\le n$ and $k>n+1$.  
\end{proof}

Now we evaluate the action of $T_j(q^2)$ on $\E_{\sigma}$.  Note that since the Hecke operators commute,
the multiplicity-one result of Corollary 4.3 tells us that each $\widetilde\E_{\sigma}$ is an eigenform for
$T_j(q^2)$ ($1\le j\le n$), and in fact for $T(p)$, $T_j(p^2)$ ($1\le j\le n)$) for any prime $p$.
So we could simply do enough computation to find the eigenvalue $\lambda_{j;\sigma}(q^2)$, but
we take just a bit more effort and give a complete description of $\E_{\sigma}|T_j(q^2)$.
Then in Corollary 4.5 we simplify our expressions for the  $T_j(q^2)$-eigenvalues.

\begin{thm}\label{Theorem 4.4}  Assume $\stufe$ is square-free, and fix a prime $q|\stufe$.  For
$\sigma$ a multiplicative partition of $\stufe/q$ and $0\le d\le n$, let $\E_{\sigma_d}$
be the level $\stufe$ Eisenstein series as in Theorem 4.1; suppose $\E_{\sigma_d}\not=0$.
Then for $0\le j\le n$, 
$$\E_{\sigma_d}|T_j(q^2)=\sum_{t=0}^{n-d} A_j(d,t)\E_{\sigma_{d+t}}$$
where
\begin{align*}
A_j(d,t)&=q^{(j-t)d-t(t+1)/2}\bbeta_q(d+t,t)\\
&\quad \cdot \sum_{d_1=0}^j\sum_{d_5=0}^{j-d_1}\sum_{d_8=0}^{d_5}
q^{a_j(d;d_1,d_5,d_8)}
\overline\chi_{\stufe/q}(X_{d_1,r}^{-1}M_{\sigma_{d}}X_j,X_{d_1,r}^{-1}X_j^{-1})    \\
&\quad \cdot \bbeta_q(d,d_1)\bbeta_q(t,d_5)\bbeta_q(n-d-t,d_1+n-d-j-d_8)\\
&\quad \cdot \bbeta_q(t-d_5,d_8)\sym_q^{\chi}(t-d_5-d_8)\sym_q^{\chi}(d_5,d_8),
\end{align*}
$r=j-d_1-d_5+d_8$,
and 
\begin{align*}
a_j(d;d_1,d_5,d_8)
&=(k-d)(2d_1+d_5-d_8)+d_1(d_1-d_8-j-1)\\
&\quad +d_8(j-d_5)-d_5(d_5+1)/2+d_8(d_8+1)/2.
\end{align*}
(Here $\sym_q^{\chi}(b,c)$ is as defined in \S 2.)
Thus $\widetilde \E_{\sigma_d}|T_j(q^2)=A_j(d,0)\widetilde\E_{\sigma_d}.$
\end{thm}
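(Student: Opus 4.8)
The plan is to follow the same strategy that proved Theorem 4.1, but now applied to the operator $T_j(q^2)$ whose explicit coset representatives, for $q\mid\stufe$, are given in the Remark after Proposition 2.1: namely $\E_{\sigma_d}|T_j(q^2)=q^{j(k-n-1)}\sum_{G,Y}\E_{\sigma_d}|\bigl(\begin{smallmatrix}X_j^{-1}\\&X_j\end{smallmatrix}\bigr)\bigl(\begin{smallmatrix}G^{-1}&Y\,{}^tG\\&{}^tG\end{smallmatrix}\bigr)$, with $G$ ranging over $SL_n(\Z)/\K_j(q)$ and $Y$ over $\Y_{j,0}(q^2)$. First I would expand $\E_{\sigma_d}$ via Proposition 3.1(b) as a sum $\sum_{(M\ N)}\overline\chi(M,N)\det(M\tau+N)^{-k}$ over the class $SL_n(\Z)(M_{\sigma_d}\ I)\Gamma_0(\stufe)$, substitute into the Hecke sum, and track the transformation $(M\ N)\mapsto X_j^{-1}\,(MG^{-1}\ \,(MY+qN)\,{}^tG)/q$ (schematically), recording the power of $q$ coming from $\det$. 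As in Theorem 4.1, the resulting form is again an Eisenstein series, so $\E_{\sigma_d}|T_j(q^2)=\sum_{t\ge0}A_j(d,t)\,\E_{\sigma_{d+t}}$ where $A_j(d,t)$ is the coefficient $c$ attached to the representative pair $(M_{\sigma_{d+t}}\ I)$, and the whole problem reduces to a counting/weighting computation for this single coefficient.

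The key steps are then purely local at $q$, since the $\stufe/q$-part of the character contributes only the uniform twist $\overline\chi_{\stufe/q}(X_{d_1,r}^{-1}M_{\sigma_d}X_j,X_{d_1,r}^{-1}X_j^{-1})$ (choosing $G\equiv I$ and $Y\equiv0\pmod{\stufe/q}$ as the Remark permits). Working modulo $q$, I would block-decompose $M_{\sigma_d}\equiv\bigl(\begin{smallmatrix}I_d\\&0\end{smallmatrix}\bigr)$ against the $X_j$-splitting and introduce the integer parameters $d_1,d_5,d_8$ measuring how the $q$-rank $d$ of $M_{\sigma_d}$ distributes across the blocks cut out by $X_j$ and by the new rank $d+t$: $d_1$ counts the part of the rank falling in the ``$pI_j$'' block, $d_5$ and $d_8$ govern the interaction of the rank increase $t$ with the symmetric-matrix freedom in $Y$. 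For each choice of $(d_1,d_5,d_8)$ I would count the admissible $G$ by Lemma 6.4 (producing the Gaussian binomials $\bbeta_q(d,d_1)$, $\bbeta_q(t,d_5)$, $\bbeta_q(n-d-t,\,d_1+n-d-j-d_8)$, $\bbeta_q(t-d_5,d_8)$), count the free symmetric $Y$-entries modulo $q$ or $q^2$ subject to the coprimality/rank constraints (producing the two factors $\sym_q^{\chi}(t-d_5-d_8)$ and $\sym_q^{\chi}(d_5,d_8)$, exactly as the character-twisted symmetric-matrix counts arose in Theorem 4.1), and assemble the accumulated power $q^{a_j(d;d_1,d_5,d_8)}$ from the $\det(X_j^{\pm1})$ scalings and the cardinalities of the unconstrained $Y$-blocks.

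The main obstacle will be the bookkeeping of the $q$-power exponent $a_j(d;d_1,d_5,d_8)$ and the precise coprimality/integrality conditions that pin down which $Y$-blocks vary modulo $q$ versus $q^2$, and hence which Gaussian-binomial and $\sym_q^{\chi}$ factors appear: the operator $T_j(q^2)$ has a genuinely two-step ($q^2$-level) structure, so the $Y$-matrix in $\Y_{j,0}(q^2)$ has a mixed $q$/$q^2$ reduction pattern, and keeping the symplectic symmetry of $M\,{}^tN$ consistent with the rank jump $t$ across three index ranges is delicate. I expect no conceptual difficulty beyond Theorem 4.1 — the same Lemma 6.4 counts, the same $\sym_q^{\chi}$ evaluations, and the same ``upper-triangular in $t$'' structure apply — but verifying that the three nested sums collapse to precisely the stated $A_j(d,t)$, and in particular that $A_j(d,0)$ isolates the diagonal (giving $\widetilde\E_{\sigma_d}|T_j(q^2)=A_j(d,0)\widetilde\E_{\sigma_d}$ once the multiplicity-one diagonalization of Corollary 4.3 is invoked), is where the real care is needed.
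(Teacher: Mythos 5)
Your overall strategy is indeed the paper's: expand $\E_{\sigma_d}$ via Proposition 3.1(b), apply the explicit cosets for $T_j(q^2)$ from the Remark after Proposition 2.1, use the fact that the image is again an Eisenstein series to reduce everything to computing the coefficient of each $\E_{\sigma_{d+t}}$, and evaluate that coefficient by a local count at $q$ weighted by character values. Your identification of the uniform twist $\overline\chi_{\stufe/q}(X_{d_1,r}^{-1}M_{\sigma_d}X_j,X_{d_1,r}^{-1}X_j^{-1})$ and of the $Y$-blocks as the source of $\sym_q^{\chi}(t-d_5-d_8)$ and $\sym_q^{\chi}(d_5,d_8)$ also matches the paper.

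However, your counting plan has a genuine structural gap. The coefficient $A_j(d,t)$ is not a count over pairs $(G,Y)$ alone: after reversing the transformation, the starting classes $SL_n(\Z)(M\ N)$ that land in $SL_n(\Z)(M_{\sigma_{d+t}}\ I)$ are parameterized by a third variable $E\in\K_{d_1,r}(q)\backslash SL_n(\Z)$, so one must enumerate triples $(E,G,Y)$ subject to integrality, coprimality, and the rank condition $\rank_qM=d$. In the paper it is this $E$-count that Lemma 6.4(b) supplies (giving $\bbeta_q(d',d+d_5)\bbeta_q(n-d',n-r-d-d_5)\bbeta_q(d+d_5,d_1)$ times a $q$-power), while the $G$-count requires a separate and substantially harder result, Lemma 6.5, which counts cosets $G\in SL_n(\Z)/\K_j(q)$ subject to simultaneous rank conditions on blocks of both $M'G$ and $N'\,^tG^{-1}$ via a duality argument on subspaces; the four Gaussian binomials in the stated formula emerge only after recombining these two counts. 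Your plan attributes all of this to Lemma 6.4, which does not address the $G$-conditions at all, and it omits the $E$-enumeration entirely. Relatedly, you take the relation $r=j-d_1-d_5+d_8$ as an input, whereas in the proof it is forced: the blocks $\gamma_4,\delta_2$ arising in the coprimality analysis can have full rank only when they are square, which is what pins $r$ down; without deriving this, the sum would run over a spurious extra index (the proof really works with five parameters $d_1,d_4,d_5,d_7,d_8$, with $d_4=d-d_1$ and $d_7=t-d_5-d_8$ eliminated at the end). So the plan as written would not assemble into the stated $A_j(d,t)$ without supplying Lemma 6.5 and the $E$-count.
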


\begin{proof}  As in the proof of Theorem 4.1, temporarily write $\E_{d'}$ for
$\E_{\sigma_{d'}}$ and $M_{d'}$ for $M_{\sigma_{d'}}$.
Let $\K_{r,s}=\K_{r,s}(q)$, $\Y_{j,0}=\Y_{j,0}(q^2)$, $X_{r,s}=X_{r,s}(q)$,
$\bbeta(m,r)=\bbeta_q(m,r)$, $\bmu(m,r)=\bmu_q(m,r)$.

By Proposition 2.1,
$$\E_{d}|T_j(q^2)
=q^{j(k-n-1)}\sum_{G,Y}\E_{d}|\begin{pmatrix} X_j^{-1}\\&X_j\end{pmatrix}
\begin{pmatrix} G^{-1}&Y\,^tG\\&^tG\end{pmatrix}$$
where $G$ varies over $SL_n(\Z)/\K_j$, $Y$ over $\Y_{j,0}$;
recall that we can take $G\equiv I\ (\stufe/q)$ and $Y\equiv 0\ (\stufe/q)$.
So
\begin{align*}
&\E_{d}(\tau)|T_j(q^2)\\
&=q^{j(k-n-1)}\sum_{G,Y,M,N} \overline\chi(M,N)
\det\left(MX_j^{-1}G^{-1}\tau+MX_j^{-1}Y\,^tG+NX_j\,^tG\right)^{-k}\end{align*}
(where $SL_n(\Z)(M\ N)$ varies over $SL_n(\Z)(M_{d}\ I)\Gamma_0(\stufe)$).

Take $(M\ N)\in SL_n(\Z)(M_d\ I)\Gamma_0(\stufe)$.
Let $d_1$ be the $q$-rank of the first $j$ columns of $M$ (so $d_1\le j$);
using left-multiplication from $SL_n(\Z)$, we can
adjust our choice of representative to assume $M=\begin{pmatrix} M_1&M_2\\ qM_3&M_4\\qM_5'&qM_6'\end{pmatrix}$
where $M_1$ is $d_1\times j$ (so $\rank_qM_1=d_1$), $M_4$ is $d_4\times(n-j)$ with $\rank_qM_4=d_4
=d-d_1$.
Correspondingly, write $N=\begin{pmatrix} N_1&N_2\\N_3&N_4\\N_5'&N_6'\end{pmatrix}$ where $N_1$ is $d_1\times j$
and $N_4$ is $d_4\times(n-j)$.  
Take $r$ so that $\rank_q\begin{pmatrix} M_1&0\\M_5'&N_6'\end{pmatrix} = n-d_4-r$; so adjusting our choice of representative, we 
can assume 
$$(qM_5'\ qM_6'\ N_5'\ N_6')=\begin{pmatrix} qM_5&qM_6&N_5&N_6\\q^2M_7&qM_8&N_7&qN_8\end{pmatrix}$$
where $M_6, N_6$ are $(n-d-r)\times(n-j)$ and $\rank_q\begin{pmatrix} M_1&0\\M_5&N_6\end{pmatrix}=n-d_4-r.$
Note that since $(M,N)=1$, we must have $\rank_qN_7=r.$  Then 
$$X_{d_1,r} (M\ N)\begin{pmatrix} X_j^{-1}\\&X_j\end{pmatrix}=\begin{pmatrix} M_1&qM_2&q^2N_1&qN_2\\M_3&M_4&qN_3&N_4\\M_5&qM_6&qN_5&N_6\\M_7&M_8&N_7&N_8\end{pmatrix}$$
has $q$-rank $n$.  Hence for any $Y\in\Y_j$,
$$(M'\ N')=X_{d_1,r} (M\ N)\begin{pmatrix} X_j^{-1}\\&X_j\end{pmatrix}
\begin{pmatrix} G^{-1}&Y\,^tG\\0&^tG\end{pmatrix}$$
is a coprime symmetric pair with $\rank_qM'=d+t$ for some $t\ge 0$.
Note that $\det(M'\tau+N')^{-k}=q^{-k(d_1-r)}\det(MX_j^{-1}G^{-1}\tau+MX_j^{-1}Y\,^tG+NX_j\,^tG)^{-k}.$

As discussed in the proof of Theorem 4.1, we have
$$\E_{d}|T_j(q^2)=\sum_{d'=d}^{n}c_d(M_{d'})\E_{d'},$$ some $c_d(M_{d'})\in\C$.
So reversing, suppose $d'\ge d$ and $\E_{d'}\not=0$.
To compute $c_d(M_{d'})$,
we need to identify the equivalence classes $SL_n(\Z)(M\ N)\in SL_n(\Z)(M_d\ I)\Gamma_0(\stufe)$
and $Y\in\Y_{j,0}$, $G\in SL_n(\Z)/\K_j$ so that
$$X_{d_1,r}(M\ N)\begin{pmatrix} X_j^{-1}\\&X_j\end{pmatrix}
\begin{pmatrix} G^{-1}&Y\,^tG\\0&^tG\end{pmatrix}\in SL_n(\Z)(M_{d'}\ I).$$
Equivalently, we need to identify $Y\in\Y_{j,0}$, $G\in SL_n(\Z)/\K_j$ and the equivalence classes
$$SL_n(\Z)X_{d_1,r}^{-1}E(M_{d'}\ I)\begin{pmatrix} G&-GY\\&^tG^{-1}\end{pmatrix}\begin{pmatrix} X_j\\&X_j^{-1}\end{pmatrix}
\in SL_n(\Z)(M_d\ I)\Gamma_0(\stufe)$$
where $E\in SL_n(\Z)$.
For $E\in SL_n(\Z)$, we have $X_{d_1,r}^{-1}EX_{d_1,r}\in SL_n(\Z)$ if and only if $E\in\K_{d_1,r},$
so we only need to consider $E\in\K_{d_1,r}\backslash SL_n(\Z)$.
Thus we need to consider all $E,G,Y$ so that with 
$$(M\ N)=X_{d_1,r}^{-1}E(M_{d'}\ I)\begin{pmatrix} G&-GY\\&^tG^{-1}\end{pmatrix}\begin{pmatrix} X_j\\&X_j^{-1}\end{pmatrix},$$
$M,N$ are integral with $(M,N)=1$ and $\rank_qM=d$
(that $M\,^tN$ is symmetric is automatic).
Note that since we can take $E,G\equiv I\ (\stufe/q)$ and $Y\equiv0\ (\stufe/q)$ and we know
$M_{d'}\equiv M_d\ (\stufe/q)$, for such $(M\ N)$ we have
$$\chi_{\stufe/q}(M,N)=\chi_{\stufe/q}(X_{d_1,r}^{-1}M_dX_j,X_{d_1,r}^{-1}X_j^{-1}).$$

For $E,G\in SL_n(\Z)$, write
$$EM_{d'}G=\begin{pmatrix}M_1&M_2\\M_3'&M_4'\\M_7&M_8\end{pmatrix},
\ E\,^tG^{-1}=\begin{pmatrix} N_1&N_2\\N_3'&N_4'\\N_7&N_8\end{pmatrix}$$
where $M_1, N_1$ are $d_1\times j$, $M_7, N_7$ are $r\times j$.
Then $$M=\begin{pmatrix} M_1&M_2/q\\qM_3'&M_4'\\q^2M_7&qM_8\end{pmatrix}.$$
So to have $M$ integral, we need $M_2\equiv0\ (q)$, and to have $\rank_qM=d$, we need
$\rank_q\begin{pmatrix}M_1&0\\0&M_4'\end{pmatrix}=d.$  So suppose these conditions are met.
We have $Y=\begin{pmatrix}U&V\\^tV&0\end{pmatrix}$ where $U$ is $j\times j$ and symmetric; to have $N$ integral,
we need $N_1\equiv M_1U+M_2\,^tV\ (q^2),$ $N_2\equiv M_1V\ (q),$ and $N_3'\equiv M_3'U+M_4'\,^tV\ (q).$
We are supposing that $\rank_q(M_1\ M_2\ N_1\ N_2)=d_1$ and $M_2\equiv0\ (q)$, so we can solve
these first two congruences only if $\rank_qM_1=d_1$.  So supposing this condition is met, 
we have $M_2/q$ in the column span of $M_1$ modulo $q$, so we must have $\rank_qM_4'=d_4$ where $d_4=d-d_1$.
Then adjusting $E$ using left multiplication from $\K_{d_1,r}$, and adjusting $G$ using right multiplication from $\K_j$,
we can assume 
$$EM_{d'}G=\begin{pmatrix} M_1&M_2\\M_3&M_4\\M_5&M_6\\M_7&M_8\end{pmatrix}$$
where $M_4$ is $d_4\times (n-j)$ with $\rank_qM_4=d_4$ and $M_6\equiv0\ (q)$; further, we can assume
$M_i=(A_i'\ A_i)$ where $A_i'$ has $d_1$ columns when $i$ is odd, $d_4$ columns when $i$ is even,
$A_i'\equiv 0\ (q^2)$ for $i\not=1,4$, and $A_i\equiv 0\ (q^2)$ for $i\le 4$.
Correspondingly, split $N_3'$ as $\begin{pmatrix} N_3\\N_5\end{pmatrix}$, $N_4'$ as $\begin{pmatrix}N_4\\N_6\end{pmatrix}$ where
$N_3, N_4$ have $d_4$ rows, and split $N_i$ as $(B_i'\ B_i)$ where $B_i'$ has $d_1$ columns when $i$ is odd, $d_4$ columns when
$i$ is even.  Split $U$ as $\begin{pmatrix}U_1&U_2\\^tU_2&U_3\end{pmatrix}$ where $U_1$ is $d_1\times d_1$,
and split $V$ as $\begin{pmatrix}V_1&V_2\\V_3&V_4\end{pmatrix}$ where $V_1$ is $d_1\times d_4$.  
Then $M_1U\equiv A_1'(U_1\ U_2)\ (q^2)$, $M_1V\equiv A_1'(V_1\ V_2)\ (q),$ $M_4\,^tV\equiv A_4'(\,^tV_1\ ^tV_3)\ (q).$
So to have $N$ integral, we need to choose $U_1, U_2, V_1, V_2, V_3$ so that
$$(B_1'\ B_1)\equiv A_1'(U_1\ U_2)\ (q^2),\ (B_2'\ B_2)\equiv A_1'(V_1\ V_2)\ (q),\ B_3\equiv A_4'\,^tV_3\ (q).$$
Then by the symmetry of $EM_{d'}\,^tE$, we have $B_3'\,^tA_1'\equiv A_4'\,^tB_2'\ (q),$
so we have $B_3'\equiv A_4'\,^tV_1\ (q).$ 
By symmetry, we also have
$$B_5'\,^tA_1'\equiv A_5\,^tB_1+A_6\,^tB_2\equiv A_5\,^tU_2\,^tA_1'+A_6\,^tV_2\,^tA_1'\ (q^2),$$
$$B_6'\,^tA_4'\equiv A_5\,^tB_3\equiv A_5V_3\,^tA_4'\ (q),$$
$$B_7'\,^tA_1'\equiv A_7\,^tB_1+A_8\,^tB_2\equiv A_7\,^tU_2\,^tA_1'+A_8\,^tV_2\,^tA_1'\ (q).$$
So to have $N$ integral, we also need to choose $U_3$ so that $B_5\equiv A_5U_3\ (q)$, and then the lower
$n-d$ rows of $N$ are congruent modulo $q$ to
$$\begin{pmatrix} 0&(B_5-A_5U_3-A_6\,^tV_4)/q&0&B_6-A_5V_4\\0&B_7-A_7U_3-A_8\,^tV_4&0&0\end{pmatrix}.$$

Further refining our choices for $E, G$ using $\K_{d_1,r},\K_j$, we can assume
$$A_5\equiv\begin{pmatrix} \alpha_5&0&0\\0&q\alpha'_5&0\end{pmatrix}\ (q^2),\ 
A_6\equiv\begin{pmatrix} 0&0\\0&q\alpha'_6\end{pmatrix}\ (q^2),$$
$$A_7\equiv\begin{pmatrix} 0&0&0\\0&0&\alpha_7\\0&0&0\end{pmatrix}\ (q),\ 
A_8\equiv\begin{pmatrix} 0&0\\0&0\\\alpha_8&0\end{pmatrix}\ (q)$$
where $\alpha_i$ is $d_i\times d_i$ and invertible modulo $q$,
$\alpha'_5$ is $(n-d-r-d_5)\times(j-d_1-d_5-d_7)$, and
$\alpha'_6$ is $(n-d-r-d_5)\times(n-j-d_4-d_8)$; here
the top $r-d_7-d_8$ and bottom $d_8$ rows of $A_7$ are 0 modulo $q$.
Correspondingly, write 
$$B_5=\begin{pmatrix} \beta_1&\beta_2&\beta_3\\ \beta_4&\beta_5&\beta_6\end{pmatrix},\ 
B_6=\begin{pmatrix} \gamma_1&\gamma_2\\ \gamma_3&\gamma_4\end{pmatrix},$$
$$B_7=\begin{pmatrix} \delta_1&\delta_2&\delta_3\\ \delta_4&\delta_5&\delta_6\\ \delta_7&\delta_8&\delta_9\end{pmatrix},\ 
B_8=\begin{pmatrix} \epsilon_1&\epsilon_2\\ \epsilon_3&\epsilon_4\\ \epsilon_5&\epsilon_6\end{pmatrix}.$$
By symmetry and the invertibility of $\alpha_5, \alpha_7, \alpha_8$ modulo $q$, we have that
$\beta_4$, $\beta_6$, $\gamma_3$, $\delta_1$, $\delta_3$, $\epsilon_1\equiv 0\ (q)$, the bottom $n-d-r-d_5$ rows of $B_5', B_6'$,
and the top $r-d_7-d_8$ rows of $B_7', B_8'$ are  0 modulo $q$.
Then since $E\,^tG^{-1}$ is invertible, we know that $\rank_q\begin{pmatrix}\beta_5&\gamma_4\\ \delta_2&\epsilon_2\end{pmatrix}=n-d'.$

Write 
$$U_3=\begin{pmatrix} \mu_1&\mu_2&\mu_3\\^t\mu_2&\mu_4&\mu_5\\^t\mu_3&^t\mu_5&\mu_6\end{pmatrix},\ 
V_4=\begin{pmatrix} \nu_1&\nu_2\\ \nu_3&\nu_4\\ \nu_5&\nu_6\end{pmatrix}$$
where $\mu_1$ is $d_5\times d_5$, $\mu_6$ is $d_7\times d_7$, $\nu_1$ is $d_5\times d_8$, $\nu_5$ is $d_7\times d_8$.
To have $B_5\equiv A_5U_3\ (q),$ we need $(\beta_1\ \beta_2\ \beta_3)\equiv\alpha_5(\mu_1\ \mu_2\ \mu_3)\ (q),$
and $\beta_5\equiv0\ (q)$ (and hence $\gamma_4$ is invertible modulo $q$). 
When these conditions are met, we must have $\rank_q\gamma_4=n-d-r-d_5$, and by symmetry,
$$\delta_4\,^t\alpha_5\equiv\alpha_7\,^t\beta_3\equiv\alpha_7\,^t\mu_3\,^t\alpha_5\ (q).$$
Then to have $(M,N)=1$, we need 
$$B=\begin{pmatrix} (B_5-A_5U_3-A_6\,^tV_4)/q&B_6-A_5V_4\\B_7-A_7U_3-A_8\,^tV_4&q(B_8-A_7V_4)\end{pmatrix}$$
to have $q$-rank $n-d$.  Note that modulo $q$, $B$ is congruent to
$$\begin{pmatrix}
 (\beta_1-\alpha_5\mu_1)/q&(\beta_2-\alpha_5\mu_2)/q&(\beta_3-\alpha_5\mu_3)/q&\gamma_1-\alpha_5\nu_1&\gamma_2-\alpha_5\nu_2\\
0&*&*&0&\gamma_4\\
0&\delta_2&0&0&0\\
0&\delta_5-\alpha_7\,^t\mu_5&\delta_6-\alpha_7\mu_6&0&0\\
\delta_7-\alpha_8\,^t\nu_1&\delta_8-\alpha_8\,^t\nu_3&\delta_9-\alpha_8\,^t\nu_6&0&0\end{pmatrix}.$$

Since $E\,^tG^{-1}$ is invertible, and given that $\beta_4, \beta_5, \beta_6, \gamma_3$ and the lower $n-d-r-d_5$ rows of
$B_5', B_6'$ are 0 modulo $q$, we must have $\rank_q\gamma_4=n-d-r-d_5$.  To have $B$ invertible modulo $q$,
we need $\rank_q\delta_2=r-d_7-d_8$.  Given the sizes of $\gamma_4,\delta_2$, this requires
$$n-d-r-d_5\le n-j-d_4-d_8 \text{ and }  r-d_7-d_8\le j-d_1-d_5-d_7,$$
so this requires $r=j-d_1-d_5+d_8$ (in which case $\gamma_4,\delta_2$ are square, and hence invertible modulo $q$).

Choose $(n-d)\times(n-d)$ permutation matrices $P_1, P_2$ so that 
$$P_1\begin{pmatrix}A_5&A_6\\A_7&A_8\end{pmatrix} P_2\equiv\begin{pmatrix}\alpha_5\\&\alpha_8\\&&0\\&&&\alpha_7\\&&&&0\end{pmatrix}\ (q),$$
$$P_1\begin{pmatrix}B_5&B_6\\B_7&B_8\end{pmatrix} P_2
=\begin{pmatrix} \beta_1&\gamma_1&\gamma_2&\beta_3&\beta_2\\
\delta_7&\epsilon_5&\epsilon_6&\delta_9&\delta_8\\
\beta_4&\gamma_3&\gamma_4&\beta_6&\beta_5\\
\delta_4&\epsilon_3&\epsilon_4&\delta_6&\delta_5\\
\delta_1&\epsilon_1&\epsilon_2&\delta_3&\delta_2\end{pmatrix}.$$
(So $P_1$ corresponds to the permutation $(2\ 3\ 5)$, $P_2$ to the permutation $(2\ 5\ 3\ 4)$.)
Thus (still supposing that $\beta_5\equiv0\ (q)$, and that $\gamma_4,\delta_2$ are invertible modulo $q$), we have
$P_1BP_2$ is congruent modulo $q$ to
$$\begin{pmatrix} 
(\beta_1-\alpha_5\mu_1)/q&\gamma_1-\alpha_5\nu_1&\gamma_2-\alpha_5\nu_2&(\beta_3-\alpha_5\mu_3)/q&(\beta_2-\alpha_5-\mu_2)/q\\
\delta_7-\alpha_8\,^t\nu_1&0&0&\delta_9-\alpha_8\,^t\nu_6&\delta_8-\alpha_8\,^t\nu_3\\
0&0&\gamma_4&*&*\\
0&0&0&\delta_6-\alpha_7\mu_6&\delta_5-\alpha_7\,^t\mu_5\\
0&0&0&0&\delta_2\end{pmatrix}.$$
Hence $B$ is invertible modulo $q$ if and only if 
$\begin{pmatrix} (\beta_1-\alpha_5\mu_1)/q&\gamma_1-\alpha_5\nu_1\\ \delta_7-\alpha_8\,^t\nu_1&0\end{pmatrix}$
and $\delta_6-\alpha_7\mu_6$
are invertible modulo $q$.  Note that by the symmetry of $M_{d'}$, we know that
$(\delta_6-\alpha_7\mu_6)\,^t\alpha_7$ and
$\begin{pmatrix} (\beta_1-\alpha_5\mu_1)\,^t\alpha_5/q&(\gamma_1-\alpha_5\nu_1)\,^t\alpha_8\\ 
(\delta_7-\alpha_8\,^t\nu_1)\,^t\alpha_5&0\end{pmatrix}$
are symmetric modulo $q$.

To compute $\chi(M,N),$ recall that we can (and do) assume that $E,G\equiv I\ (\stufe/q)$, $Y\equiv0\ (\stufe/q)$,
and we know that $M_{d'}\equiv M_d\ (\stufe/q)$;
so $$\chi_{\stufe/q}(M,N)=\chi_{\stufe/q}(X_{d_1,r}^{-1}M_dX_j, X_{d_1,r}^{-1}X_j^{-1}).$$
To help compute $\overline\chi_q(M,N)$, let $G_1$ be the $n\times n$ permutation matrix so that
$$EM_{d'}GG_1\equiv\begin{pmatrix}A_1'&0&0&0\\0&A_4'&0&0\\0&0&A_5&A_6\\0&0&A_7&A_8\end{pmatrix}\ (q).$$
Setting $E_1=\begin{pmatrix}I_d\\&P_1\end{pmatrix}$, $G_2=\begin{pmatrix} I_d\\&P_2\end{pmatrix},$ and remembering that $^tP^{-1}=P$ for
a permutation matrix $P$,
we have
\begin{align*}
&\chi_q(\det E_1G_1G_2)\chi_q(M,N)\\
&\quad = \chi_q(E_1MG_1G_2,E_1NG_1G_2)\\
&\quad = \overline\chi_q(\det A_1'\cdot\det A_4'\cdot\det\alpha_5\cdot\det\alpha_7\cdot\det\alpha_8)
\chi_q(\det\gamma_4\cdot\det\delta_2)\\
&\qquad\cdot 
\chi_q\left(\det\begin{pmatrix} (\beta_1-\alpha_5\mu_1)\,^t\alpha_5/q&(\gamma_1-\alpha_5\nu_1)\,^t\alpha_8
\\(\delta_7-\alpha_8\,^t\nu_1)\,^t\alpha_5&0\end{pmatrix}\cdot \det(\delta_6-\alpha_7\mu_6)\,^t\alpha_7\right).
\end{align*}
Also, since $\chi_q(M_{d'},I)=1$, we have
\begin{align*}
&\chi_q(\det E_1G_1G_2)\\
&\quad=\chi_q(E_1M_{d'}G_1G_2,E_1G_1G_2)\\
&\quad= \overline\chi_q(\det A_1'\cdot\det A_4'\cdot\det\alpha_5\cdot\det\alpha_7\cdot\det\alpha_8)
\chi_q(\det\gamma_4\cdot\det\delta_2).\end{align*}

To summarise:  Given $(M'\ N')$ with $\rank_qM'=d'$, and given
choices for $d_1, d_4=d-d_1, d_5, d_7, d_8=d'-d-d_5-d_7$ and $r=j-d_1-d_5+d_8$
(with $d_1+d_5+d_7\le j$, $d_4+d_8\le n-j$),
to be able to choose $E,G,Y$ so that $M,N$ are integral and coprime with
$\rank_qM=d$, we need to choose $E\in\K_{d_1,r}\backslash SL_n(\Z)$ so that the $q$-rank of the
upper $d_1$ rows of $EM'$ is $d_1$, and the $q$-rank of the upper $n-r$ rows of $EM'$ is $d+d_5$ (where
$d_5\le j-d_1$; note this is only possible when $d'-d-d_5\le r$).
By Lemma 6.4 (b), we have
\begin{align*}
&\bbeta(d',d+d_5)\bbeta(n-d',n-r-d-d_5)\bbeta(d+d_5,d_1)\\
&\quad \cdot q^{(d+d_5)(r+d+d_5-d')+d_1(n-d-d_5)}
\end{align*}
choices for $E$.
Modifying $E$ using left multiplication from $\K_{d_1,r}$, we can assume the upper $d+d_5$ rows of $EM_{d'}$ have $q$-rank $d+d_5$.
We need to choose $G\in SL_n(\Z)/\K_j$ to meet various conditions (as detailed in the preceding discussion); choosing $G_0\in SL_n(\Z)$ so that
$$EM_{d'}G_0\equiv\begin{pmatrix} M_1&0&0&0\\0&C&0&0\\0&0&0&0\\0&0&C'&0\end{pmatrix}\ (q)$$
where $M_1$ is $d_1\times d_1$, $C$ is $(d_4+d_5)\times(d_4+d_5)$, $C'$ is $(d_7+d_8)\times(d_7+d_8)$
and $M_1, C, C'$ are invertible modulo $q$, Lemma 6.5 describes the conditions that
$$E(M_{d'}\ I)\begin{pmatrix} G\\&^tG^{-1}\end{pmatrix}$$
must meet, where $G=G_0G'\in SL_n(\Z)/\K_j$ (note that as $G'$ varies over $SL_n(\Z)/\K_j$, so does $G$).
By Lemma 6.5, we have
$$\bbeta(d_4+d_5,d_4)\bbeta(d_7+d_8,d_8)q^{(d_4+d_8)(j-d_1-d_5)-d_7d_8}$$
choices for $G$.  
Then with further adjustments to $E$ using left multiplication from $\K_{d_1,r}$ and to $G$ using right multiplication
from $\K_j$ (as described above),
using notation as above and writing $\mu_i=\mu'_i+q\mu''_i$,
we have that
$\mu'_1$, $\mu'_2$, $\mu'_3$ are uniquely determined modulo $q$, $\mu_4$, $ \mu_5$ are unconstrained modulo $q^2$, and
$\mu_2''$, $\mu_3''$, $\mu_6''$, $\nu_2$, $\nu_3$, $\nu_4$, $\nu_5$, $\nu_6$ are unconstrained modulo $q$.
Let $\F=\Z/q\Z$;
as $\mu_1''$, $\nu_1$, $\mu_6'$ vary modulo $q$,
$$\begin{pmatrix} (\beta_1-\alpha_5\mu_1)\,^t\alpha_5/q&(\gamma_1-\alpha_5\mu_1)\,^t\alpha_8\\
(\delta_7-\alpha_8\,^t\nu_1)\,^t\alpha_5&0\end{pmatrix}$$
varies over elements in $\F^{d_5+d_8,d_5+d_8}_{\sym}$ of the form $\begin{pmatrix}C&D\\^tD&0\end{pmatrix}$
with $C$ $d_5\times d_5$, and $(\delta_6-\alpha_7\mu_6')\,^t\alpha_7$ varies over
$\F^{d_7,d_7}_{\sym}$.  Hence as we vary $Y$ subject to these constraints, we have
\begin{align*}
&\sum_Y \chi_q(X_{d_1,r}^{-1}EM_{d'}GX_j, X_{d_1,r}^{-1}E(\,^tG^{-1}-M_{d'}GY)X_j^{-1}) \\
&\quad= q^{(j-d_1)(n-d_1-d_4+1)-d_5(j-d_1+d_8+1)-d_7(d_7+1)/2}
\sym_q^{\chi}(d_5,d_8)\sym_q^{\chi}(d_7).
\end{align*}

This yields a formula for $A_j(d,t)$; to simplify this formula,
note that 
$\bbeta(m,s)=\bbeta(m,m-s)$, 
so
\begin{align*}
&\bbeta(d_1+d_4+d_5,d_1)\bbeta(d',d_1+d_4+d_5)\bbeta(d_4+d_5,d_4)\\
&\ =
\frac{\bmu(d+d_5,d_1) \bmu(d+t,t-d_5) \bmu(d-d_1+d_5,d_5)}
  {\bmu(d_1,d_1) \bmu(t-d_5,t-d_5) \bmu(d_5,d_5)}
\frac{\bmu(t,d_5)}{\bmu(t,d_5)} \\
&\ =
\frac{\bmu(d+t,d_1+t) \bmu(t,d_5)} {\bmu(d_1,d_1) \bmu(t,t) \bmu(d_5,d_5)} \\
&\ =
\frac{\bmu(d+t,t) \bmu(d,d_1) \bmu(t,d_5)} {\bmu(t,t) \bmu(d_1,d_1) \bmu(d_5,d_5)} \\
&\ =
\bbeta(d+t,t) \bbeta(d,d_1) \bbeta(t,d_5)
\end{align*}
where $t=d'-d$.
We have the constraints that $r=j-d_1-d_5+d_8$,
$d=d_1+d_4$,  $t=d_5+d_7+d_8$, $d_1+d_5+d_7\le j$, 
$d_4+d_8\le n-j$, and $d_8\le d_5$.  
Taking $0\le d_1\le j$, $0\le d_5\le j-d_1$, and $0\le d_8\le d_5$, a summand in the final formula for
$A_j(d,t)$ is 0 if the other constraints on the $d_i$ are not met.
\end{proof}

\begin{cor}\label{Corollary 4.5}  Let $\sigma$ be a multiplicative partition of $\stufe$, and suppose $\E_{\sigma}\not=0$.
Then 
for a prime $q|\stufe$ and $d=\rank_qM_{\sigma}$, we have
$\widetilde\E_{\sigma}|T_j(q^2)=\lambda_{j;\sigma}(q^2)\widetilde\E_{\sigma}$ where
$\stufe_i'=\stufe_i/(q,\stufe_i)$ and 
$$\lambda_{j;\sigma}(q^2)=
q^{jd}\sum_{\ell=0}^j q^{\ell(2k-2d-j+\ell-1)}\chi_{_{\stufe'_0}}(q^{2\ell})\chi_{_{\stufe'_n}}(q^{2(j-\ell)})
\bbeta_q(d,\ell)\bbeta_q(n-d,j-\ell).$$
\end{cor}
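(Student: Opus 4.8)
The plan is to start from Theorem 4.4, which already records that $\widetilde\E_{\sigma}|T_j(q^2)=A_j(d,0)\widetilde\E_{\sigma}$ once we identify the partition $\sigma$ of Corollary 4.5 (with $d=\rank_qM_\sigma$) with the partition $\sigma_d$ of Theorem 4.4, i.e.\ $\sigma=(\sigma^-)_d$ where $\sigma^-$ is the partition of $\stufe/q$ obtained by deleting $q$ from $\stufe_d$. Hence $\lambda_{j;\sigma}(q^2)=A_j(d,0)$, and the whole task is to simplify the triple-sum expression for $A_j(d,t)$ at $t=0$. First I would set $t=0$. Since $\bbeta_q(t,d_5)=\bbeta_q(0,d_5)$ vanishes unless $d_5=0$, the $d_5$-sum collapses to $d_5=0$, and then $d_8\le d_5$ forces $d_8=0$; writing $\ell=d_1$ and $r=j-\ell$, only the single index $\ell$ survives.

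Next I would simplify the surviving elementary factors. With $d_5=d_8=0$ we have $\bbeta_q(t-d_5,d_8)=\bbeta_q(0,0)=1$, $\sym_q^{\chi}(t-d_5-d_8)=\sym_q^{\chi}(0)=1$, $\sym_q^{\chi}(d_5,d_8)=\sym_q^{\chi}(0,0)=1$, and $\bbeta_q(d+t,t)=\bbeta_q(d,0)=1$, while the prefactor $q^{(j-t)d-t(t+1)/2}$ becomes $q^{jd}$. The factor $\bbeta_q(n-d-t,d_1+n-d-j-d_8)$ becomes $\bbeta_q(n-d,\ell+n-d-j)$, which I would rewrite via $\bbeta_q(m,s)=\bbeta_q(m,m-s)$ as $\bbeta_q(n-d,j-\ell)$. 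Substituting $d_5=d_8=0$, $d_1=\ell$ into $a_j$ gives $a_j(d;\ell,0,0)=2\ell(k-d)+\ell(\ell-j-1)=\ell(2k-2d-j+\ell-1)$, exactly the exponent in the target. This leaves $A_j(d,0)=q^{jd}\sum_{\ell=0}^{j} q^{\ell(2k-2d-j+\ell-1)}\,\overline\chi_{\stufe/q}\!\left(X_{\ell,j-\ell}^{-1}M_\sigma X_j,\,X_{\ell,j-\ell}^{-1}X_j^{-1}\right)\bbeta_q(d,\ell)\,\bbeta_q(n-d,j-\ell)$, so everything matches once the character factor is evaluated.

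The substantive step, and the main obstacle, is to show that the character factor equals $\chi_{\stufe_0'}(q^{2\ell})\chi_{\stufe_n'}(q^{2(j-\ell)})$. Since $\chi_{\stufe/q}=\prod_{q'\mid\stufe/q}\chi_{q'}$, I would evaluate $\chi_{q'}$ one prime at a time. Fix $q'\mid\stufe/q$ and reduce modulo $q'$: there $q$ is a unit, so $A:=X_{\ell,j-\ell}^{-1}$ and $B:=X_j$ become invertible diagonal matrices, and $M_\sigma\equiv\diag(I_{d'},0)\pmod{q'}$ with $d'=\rank_{q'}M_\sigma$ (the ones sitting in the top $d'$ positions, by the nested construction of the diagonal $M_\sigma$ in \S4). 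Thus $M:=AM_\sigma B$ and $N:=AB^{-1}$ are diagonal modulo $q'$, with $N$ invertible and $M$ of rank $d'$ with nonzero entries exactly in its top $d'$ positions. Feeding this diagonal pair into Proposition 3.7 (with $E_0=E_1=I$) gives $\chi_{q'}(M,N)=\chi_{q'}(\det\overline{M_1}\cdot\det N_4)$, where $M_1$ is the top $d'\times d'$ block and $N_4$ the bottom $(n-d')\times(n-d')$ block; a direct bookkeeping of the diagonal entries $A_{ii}\in\{q,1,\overline q\}$ and $B_{ii}\in\{q,1\}$ yields $\det\overline{M_1}\cdot\det N_4\equiv q^{-2\ell}\prod_{i\le d'}A_{ii}^{-2}\pmod{q'}$.

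The key observation is then that $q^{-2\ell}\prod_{i\le d'}A_{ii}^{-2}$ is always an even power of $q$. Hence for the primes with $0<d'<n$, where $\chi_{q'}^2=1$ by Proposition 3.6, the factor $\chi_{q'}(M,N)=1$ and these primes drop out. For $d'=0$ the product is $q^{-2\ell}$, giving $\overline\chi_{q'}(M,N)=\chi_{q'}(q^{2\ell})$, and for $d'=n$ it is $q^{2\ell-2j}$, giving $\overline\chi_{q'}(M,N)=\chi_{q'}(q^{2(j-\ell)})$. Collecting the $d'=0$ primes into $\chi_{\stufe_0'}$ and the $d'=n$ primes into $\chi_{\stufe_n'}$ (the primes dividing $\stufe_0'$, resp.\ $\stufe_n'$, being exactly those $q'\mid\stufe/q$ with $\rank_{q'}M_\sigma=0$, resp.\ $n$), the character factor becomes $\chi_{\stufe_0'}(q^{2\ell})\chi_{\stufe_n'}(q^{2(j-\ell)})$, which completes the identification of $\lambda_{j;\sigma}(q^2)$. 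I expect the careful diagonal bookkeeping of the third step together with this parity argument to be the only real difficulty; the rest is routine substitution into Theorem 4.4.
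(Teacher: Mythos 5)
Your proposal is correct and takes essentially the same route as the paper: specialize the formula of Theorem 4.4 at $t=0$ (where $\bbeta_q(0,d_5)$ forces $d_5=d_8=0$, collapsing to a single sum over $\ell=d_1$ with $r=j-\ell$), and then evaluate the character factor using Propositions 3.6 and 3.7. The only organizational difference is that the paper first splits off a factor $\chi_{\stufe/q}(q^j)$ by noting that $\begin{pmatrix} X_j\\&X_j^{-1}\end{pmatrix}$ is congruent modulo $\stufe/q$ to an element of $Sp_n(\Z)$ before invoking Propositions 3.6 and 3.7, whereas you perform the full diagonal bookkeeping prime-by-prime in one step; your parity argument (the character is evaluated on $q^{-2\ell}$ times a square, so primes with $0<\rank_{q'}M_\sigma<n$ contribute $1$) supplies exactly the details the paper leaves to the reader.
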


\begin{proof}  Since
$T(q)$ and $T_j(q^2)$ commute,
by Corollary 4.3 and Theorem 4.4, we know that $\widetilde\E_{\sigma}$ is an eigenform
for $T_j(q^2)$ with eigenvalue $A_j(d,0)$.  
By Theorem 4.4, using $\ell$ in place of $d_1$, and noting that $\bbeta(m,r)=\bbeta(m,m-r)$,
we have
\begin{align*}
& A_j(d,0)\\
&\quad =q^{jd}\sum_{\ell=0}^j q^{\ell(2k-2d+\ell-j-1)}
\overline\chi_{\stufe/q}(X_{\ell,j-\ell}^{-1}M_{\sigma_d}X_j,X_{\ell,j-\ell}^{-1}X_j^{-1})\\
&\quad\quad\cdot \bbeta(d,\ell)\bbeta(n-d,j-\ell).
\end{align*}
Note that $\begin{pmatrix} X_j\\&X_j^{-1}\end{pmatrix}$ is congruent modulo $\stufe/q$ to an element
of $Sp_n(\Z)$.  Thus
$$\overline\chi_{\stufe/q}(X_{\ell,j-\ell}^{-1}M_{\sigma_d}X_j,X_{\ell,j-\ell}^{-1}X_j^{-1})
=\overline\chi_{\stufe/q}(X_{\ell,j-\ell}^{-1}M_{\sigma_d},X_{\ell,j-\ell}^{-1})\chi_{\stufe/q}(q^j).$$
Then we use Propositions 3.6 and 3.7 to evaluate
$\overline\chi_{\stufe/q}(X_{\ell,j-\ell}^{-1}M_{\sigma_d},X_{\ell,j-\ell}^{-1}).$
\end{proof}

\bigskip
\section{Hecke operators on Eisenstein series of arbitrary level}
\smallskip

Fix $\stufe\in\Z_+$ and $\chi$ a character modulo $\stufe$.
Assume that $k>n+1$, $\chi(-1)=(-1)^k$, and that either $\stufe>2$ or $k$ is even.
Let $\left\{\gamma_{\sigma}=\begin{pmatrix} I&0\\M_{\sigma}&I\end{pmatrix}\right\}_{\sigma}$ be a set of representatives
for $\Gamma_{\infty}\backslash Sp_n(\Z)/\Gamma_0(\stufe)$ 
so that when $\stufe$ is square-free, $M_{\sigma}$ is as in Proposition 3.5, and
let $\E_{\sigma}=\E_{\gamma_{\sigma}}$.

To more easily describe the action of Hecke operators on $\E_{\sigma}$, we define an action of
$\U_{\stufe}\times\U_{\times}$ on Eisenstein series where
$\U_{\stufe}=(\Z/\stufe\Z)^{\times}$.  Toward this, we have the following.

\begin{prop}\label{Proposition 5.1}  
Suppose $\gamma=\begin{pmatrix} I&0\\M&I\end{pmatrix}\in Sp_n(\Z)$, $v,w\in\Z$ with
$(vw,\stufe)=1$; set $(v,w)\cdot M=v\begin{pmatrix} w\\&I\end{pmatrix} M\begin{pmatrix} w\\&I\end{pmatrix}$ and
$(v,w)\cdot\gamma=\begin{pmatrix} I&0\\(v,w)\cdot M&I\end{pmatrix}.$
With $v'\equiv v\ (\stufe)$, $w'\equiv w\ (\stufe)$, we have $(v',w')\cdot\gamma\in(v,w)\cdot\gamma\Gamma(\stufe);$
hence $\E_{(v,w)\cdot\gamma}=\E_{(v',w')\cdot\gamma}.$  Further, suppose
$\gamma'=\begin{pmatrix} I&0\\M'&I\end{pmatrix}\in Sp_n(\Z)$ so that $\E_{\gamma}=\E_{\gamma'}$.
Then $\E_{(v,w)\cdot\gamma}=\E_{(v,w)\cdot\gamma'}$, so we have an action of the group $\U_{\stufe}\times\U_{\stufe}$
on 
$$\left\{\E_{\gamma}:\ \gamma=\begin{pmatrix} I&0\\M&I\end{pmatrix}\in Sp_n(\Z)\ \right\}.$$
When $\stufe$ is square-free, 
$$\E_{(v,w)\cdot \sigma}=\left(\overline\chi_{\stufe_n}(w^2)\prod_{0<d\le n}\overline\chi_{\stufe_d}(v^d)\right)\,\E_{\sigma}$$
where we write 
$\E_{\sigma}$ for $\E_{\gamma_{\sigma}}$ and
$\E_{(v,w)\cdot \sigma}$ for $\E_{(v,w)\cdot \gamma_{\sigma}}$ with $\gamma_{\sigma}$ chosen as in \S 4.
\end{prop}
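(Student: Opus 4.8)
The plan is to establish the three assertions in turn, using Propositions 3.1, 3.2, 3.5, 3.6 and 3.7. Throughout, write $D_w=\diag(w,1,\dots,1)$, so that $(v,w)\cdot M=vD_wMD_w$. Since $M$ is symmetric and integral, so is $(v,w)\cdot M$, whence $(v,w)\cdot\gamma\in Sp_n(\Z)$; moreover $M\mapsto(v,w)\cdot M$ is additive, so $(v,w)\cdot{}$ is a homomorphism of the abelian group of matrices $\begin{pmatrix} I&0\\M&I\end{pmatrix}$. For the reduction modulo $\stufe$, I would note that if $v'\equiv v$ and $w'\equiv w\pmod\stufe$ then $(v',w')\cdot M\equiv(v,w)\cdot M\pmod\stufe$, so $\big((v,w)\cdot\gamma\big)^{-1}\big((v',w')\cdot\gamma\big)=\begin{pmatrix} I&0\\(v',w')\cdot M-(v,w)\cdot M&I\end{pmatrix}\in\Gamma(\stufe)$. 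Thus $(v',w')\cdot\gamma\in(v,w)\cdot\gamma\,\Gamma(\stufe)$, and since $\chi$ is trivial on $\Gamma(\stufe)$, Proposition 3.2 gives $\E_{(v',w')\cdot\gamma}=\E_{(v,w)\cdot\gamma}$.

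The main obstacle is the well-definedness of the action, i.e.\ the implication $\E_\gamma=\E_{\gamma'}\Rightarrow\E_{(v,w)\cdot\gamma}=\E_{(v,w)\cdot\gamma'}$. I would first show that $(v,w)\cdot\gamma$ lies in the same double coset $\Gamma_\infty\gamma\Gamma_0(\stufe)$ as $\gamma$: because $v$ and $D_w$ are invertible modulo $\stufe$, the matrix $vD_wMD_w$ has the same elementary divisors modulo $\stufe$ as $M$ (for square-free $\stufe$ this is exactly the equality of $q$-ranks, Proposition 3.5), so $\big((v,w)\cdot M\ I\big)\in SL_n(\Z)(M\ I)\Gamma_0(\stufe)$. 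By Propositions 3.2 and 3.1(b) this gives $\E_{(v,w)\cdot\gamma}=c(\gamma)\,\E_\gamma$ with $c(\gamma)=\chi\big((v,w)\cdot M,I\big)$ a root of unity; in particular $\E_{(v,w)\cdot\gamma}=0$ exactly when $\E_\gamma=0$, which disposes of the case $\E_\gamma=\E_{\gamma'}=0$. When $\E_\gamma=\E_{\gamma'}\neq0$, well-definedness reduces to the identity $c(\gamma)=c(\gamma')$, and this is the delicate point: one must show the proportionality constant depends only on $\E_\gamma$ and not on the chosen representative $M$. The plan is to compute $c(\gamma)=\prod_{q\mid\stufe}\chi_q\big((v,w)\cdot M,I\big)$ prime-by-prime via Proposition 3.7 and to exhibit each local factor as a function of the local elementary-divisor data of $M$ together with $\chi_q$ alone — precisely the data that $\E_\gamma$ determines — whence $c(\gamma)=c(\gamma')$.

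Finally, for the square-free formula, let $M_\sigma$ be diagonal with $M_\sigma\equiv\begin{pmatrix} I_d\\&0\end{pmatrix}\pmod{\stufe_d}$. For a prime $q\mid\stufe_d$ one has $\rank_qM_\sigma=d$, and since $v,w$ are units modulo $q$ this rank is unchanged under $(v,w)\cdot{}$; hence $(v,w)\cdot\gamma_\sigma$ lies in the coset of $\gamma_\sigma$ and $\E_{(v,w)\cdot\sigma}=\chi\big((v,w)\cdot M_\sigma,I\big)\,\E_\sigma$. To evaluate the scalar I would apply Proposition 3.7 at each $q$: since $(v,w)\cdot M_\sigma\equiv\diag(vw^2,v,\dots,v,0,\dots,0)\pmod q$ (with $d$ nonzero entries) and $N=I$, the local factor is $1$ when $d=0$ and $\overline\chi_q(v^dw^2)$ when $d\ge1$. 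Grouping over the primes dividing each $\stufe_d$ gives $\chi\big((v,w)\cdot M_\sigma,I\big)=\prod_{d\ge1}\overline\chi_{\stufe_d}(v^dw^2)$. The concluding simplification uses Proposition 3.6: when $\E_\sigma\neq0$ we have $\chi_q^2=1$ for every prime $q\mid\stufe_d$ with $0<d<n$, so $\overline\chi_{\stufe_d}(w^2)=1$ for $0<d<n$ and only the term $d=n$ retains its $w^2$-factor. This produces
$$\E_{(v,w)\cdot\sigma}=\Big(\overline\chi_{\stufe_n}(w^2)\prod_{0<d\le n}\overline\chi_{\stufe_d}(v^d)\Big)\E_\sigma$$
(both sides vanishing if $\E_\sigma=0$), as claimed; moreover this explicit formula makes the well-definedness of the previous paragraph transparent in the square-free case, since $\rank_qM_\sigma$ alone determines each factor.
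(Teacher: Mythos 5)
Your opening paragraph (the reduction mod $\stufe$ via Proposition 3.2) and your closing paragraph (the square-free formula via Propositions 3.5, 3.6 and 3.7) are correct and agree with the paper. The middle of your argument, however, rests on a false claim. You assert that $(v,w)\cdot\gamma$ always lies in the same double coset $\Gamma_\infty\gamma\Gamma_0(\stufe)$ as $\gamma$, on the grounds that $vD_wMD_w$ and $M$ have the same elementary divisors modulo $\stufe$, and you deduce $\E_{(v,w)\cdot\gamma}=c(\gamma)\E_\gamma$. For non-square-free $\stufe$ this is false, not merely unproven. Take $n=1$, $\stufe=p^2$, $M=(p)$, $w=1$: if $(vp\ \,1)=\varepsilon\,(p\ \,1)\begin{pmatrix} a&b\\p^2c&d\end{pmatrix}$ with $\varepsilon=\pm1$ and $ad-p^2bc=1$, then $d\equiv\varepsilon\ (p)$, hence $a\equiv\varepsilon\ (p)$ from the determinant, hence $\varepsilon v=a+pc\equiv\varepsilon\ (p)$, forcing $v\equiv1\ (p)$. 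So for $v\not\equiv1\ (p)$ the pairs $(p\ \,1)$ and $(vp\ \,1)$ lie in different orbits, and (say for trivial $\chi$ and even $k$) $\E_{(v,1)\cdot\gamma}$ and $\E_{\gamma}$ are linearly independent by Proposition 3.1(c), not proportional; the same phenomenon occurs in any degree, e.g.\ with $M=\diag(p,0,\dots,0)$. Your scalar identity is precisely the square-free phenomenon; at general level the action genuinely permutes distinct basis Eisenstein series --- which is the whole point of introducing it in \S 5, where eigenforms $\E_{\sigma,\psi}$ are obtained by averaging over the orbit against characters, and where Theorem 5.2 expresses $\E_{\sigma}|T(p)$ as a combination of the distinct series $\E_{(p,\overline p^{r})\cdot\sigma}$. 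Since your disposal of the zero case and your plan for proving $c(\gamma)=c(\gamma')$ both rest on this identity (and, in addition, on Proposition 3.7, which is only proved for primes $q\parallel\stufe$), the well-definedness argument collapses at exactly the point the proposition is about: arbitrary level.

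What is needed --- and what the paper does --- is to transport the witness of the hypothesis rather than compare scalars. From $\E_{\gamma}=\E_{\gamma'}$ (nonzero; linear independence of the nonzero $\E_{\gamma_\sigma}$ forces both the coset equality and the character condition) one extracts $G\in GL_n(\Z)$ and $\delta=\begin{pmatrix} A&B\\C&D\end{pmatrix}\in\Gamma_0(\stufe)$ with $G(M\ I)\delta=(M'\ I)$ and $\chi(\det G\cdot\det D)=1$. Then, using Lemma 6.1, one builds $\delta'\equiv\delta\ (\stufe)$ with $\delta'\equiv I\ (v)$, rescales it to $\delta''=\begin{pmatrix} A'&B'/v\\vC'&D'\end{pmatrix}\in\Gamma_0(\stufe)$, constructs $\beta\in\Gamma_0(\stufe)$ congruent mod $\stufe$ to $\diag(w,I_{n-1},\overline w,I_{n-1})$ and a suitable $G'\in GL_n(\Z)$, and verifies
$$G'\big((v,w)\cdot M\ \ I\big)\,\beta^{-1}\delta''\beta\equiv\big((v,w)\cdot M'\ \ I\big)\ (\stufe),$$
with $\chi(\det G')\,\chi(\beta^{-1}\delta''\beta)=\chi(\det G)\,\chi(\det D)=1$; Propositions 3.3 and 3.2 then give $\E_{(v,w)\cdot\gamma}=\E_{(v,w)\cdot\gamma'}$. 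This explicit transport of $(G,\delta)$ to $(G',\beta^{-1}\delta''\beta)$ is the step your proposal is missing, and it cannot be replaced by any argument that assumes $\E_{(v,w)\cdot\gamma}$ is a multiple of $\E_{\gamma}$.
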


\begin{proof}  Take $\gamma,\gamma',v,w,v',w'$ as in the statement of the proposition.
By Proposition 3.5 we have $(v',w')\cdot\gamma\in(v,w)\cdot\gamma\Gamma(\stufe)$, so by Proposition 3.2
we have $\E_{(v,w)\cdot\gamma}=\E_{(v',w')\cdot\gamma}.$  

Now, given the assumption that $\E_{\gamma}=\E_{\gamma'}$, there is some $G\in GL_n(\Z)$, 
$\delta=\begin{pmatrix} A&B\\C&D\end{pmatrix}\in \Gamma_0(\stufe)$ so that
$G(M\ I)\delta=(M'\ I)$ and $\chi(\det G\cdot\det D)=1$.
By Lemma 6.1, there is some $\delta'=\begin{pmatrix}A'&B'\\C'&D'\end{pmatrix}\in\Gamma_0(\stufe)$
so that $\delta'\equiv\delta\ (\stufe)$, $\delta'\equiv I\ (v)$; set
$\delta''=\begin{pmatrix}A'&B'/v\\vC'&D'\end{pmatrix}$ (so $\delta''\in\Gamma_0(\stufe)$).
Since $SL_n(\Z)$ maps onto $SL_n(\Z/\stufe\Z)$, we can find $E\in SL_n(\Z)$ so that 
$$E\equiv\begin{pmatrix} w\cdot\det G\\&I\end{pmatrix}G\begin{pmatrix}\overline w\\&I\end{pmatrix}\ (\stufe);$$
set $G'=\begin{pmatrix}\det G\\&I\end{pmatrix}E$.  Take $\begin{pmatrix}r&s\\t&u\end{pmatrix}\in SL_2(\Z)$
so that 
$$\begin{pmatrix}r&s\\t&u\end{pmatrix}\equiv\begin{pmatrix} w&0\\0&\overline w\end{pmatrix}\ (\stufe),$$
and set
$$\beta=\begin{pmatrix} r&&s\\&I_{n-1}&&0\\t&&u\\&0&&I_{n-1}\end{pmatrix}$$
(so $\beta\in\Gamma_0(\stufe)$).  Then
$$G' \big((v,w)\cdot M\ I)\big)\beta^{-1}\delta''\beta
\equiv \big((v,w)\cdot M'\ I)\big)\ (\stufe),$$
so $\big((v,w)\cdot M'\ I)\big)\in GL_n(\Z) \big((v,w)\cdot M\ I)\big) \Gamma_0(\stufe).$
Since $\chi(\beta^{-1}\delta''\beta)=\chi(\det D)$ and $\chi(\det G')=\chi(\det G),$
by Proposition 3.2 we have $\E_{(v,w)\cdot\gamma}=\E_{(v,w)\cdot\gamma'}.$

Now suppose $\stufe$ is square-free.  
For all primes $q|\stufe$, we have $\rank_q(v,w)\cdot M=\rank_q M,$
so $((v,w)\cdot M_{\sigma}\ I)\in SL_n(M_{\sigma}\ I)\Gamma_0(\stufe).$
Fix a prime $q|\stufe$ and take $d=\rank_qM_{\sigma}$.
Thus by Proposition 3.5, we have $\E_{(v,w)\cdot\sigma}=\chi((v,w)\cdot M_{\sigma},I) \E_{\sigma}.$
If $d=0$ then $\chi_q((v,w)\cdot M_{\sigma},I)=\chi_q(0,I)=1.$  If $0<d<n$ then
$$\chi_q((v,w)\cdot M_{\sigma},I)=\chi_q\left(\begin{pmatrix} vw^2\\&vI_{d-1}\\&&0\end{pmatrix},I\right)=\overline\chi_q(v^dw^2),$$
and since $\chi_q^2=1$, $\overline\chi_q(v^dw^2)=\overline\chi_q(v^d).$  If $d=n$ then $\chi_q((v,w)\cdot M_{\sigma},I)=\overline\chi_q(v^nw^2).$
\end{proof}

Suppose $\E_{\sigma}\not=0$.  We have $(1,-1)\cdot\gamma_{\sigma}=\gamma_{\pm}\gamma_{\sigma}\gamma_{\pm},$
and $\Gamma^+_{(1,-1)\cdot\gamma_{\sigma}}=\gamma_{\pm}\Gamma^+_{\gamma_{\sigma}}\gamma_{\pm},$
so $\chi$ is trivial on $\Gamma^+_{(1,-1)\cdot\gamma_{\sigma}}$ and hence $\E_{(1,-1)\cdot\sigma}\not=0$.
With $\Gamma_0(\stufe)=\cup_{\delta}\Gamma^+_{(1,-1)\cdot\gamma_{\sigma}}$ (disjoint), left multiplication by $\gamma_{\pm}$
gives us $\Gamma_0(\stufe)=\cup_{\delta}\Gamma^+_{\gamma_{\sigma}}\gamma_{\pm}\delta$ (disjoint).
Then
\begin{align*}
\E_{(1,-1)\cdot\sigma} &\sum_{\delta}\overline\chi(\delta)\E^*|\gamma_{\pm}\gamma_{\sigma}\gamma_{\pm}|\delta\\
&=\chi(\gamma_{\pm})\sum_{\delta}\overline\chi(\delta)\E^*|\gamma_{\sigma}|\gamma_{\pm}\delta\\
&=\E_{\sigma}.
\end{align*}
If $((1,-1)\cdot M_{\sigma}\ I)=E(M_{\sigma}\ I)\gamma$ for some $E\in SL_n(\Z)$ and $\gamma\in\Gamma_0(\stufe)$,
then by Proposition 3.2 we have $\E_{(1,-1)\cdot\sigma}=\chi(\gamma)\E_{\sigma}$, so from above we must have
$\chi(\gamma)=1$.  Thus if $(M\ N)\in SL_n(\Z)(M_{\sigma}\ I)\gamma'$ and $(M\ N)\in SL_n(\Z)((1,-1)\cdot M_{\sigma}\ I)\gamma''$
for $\gamma',\gamma''\in\Gamma_0(\stufe),$ we have $\chi(\gamma')=\chi(\gamma'').$

So with $\chi(M,N)=\chi(\gamma)$ where $(M\ N)\in SL_n(\Z)(M_{\sigma}\ I)\gamma$ or
$(M\ N)\in SL_n(\Z)((1,-1)\cdot M_{\sigma}\ I)\gamma$ for $\gamma\in SL_n(\Z)$, $\chi(M,N)$ is well-defined.  Also,
$\E_{\sigma}=\frac{1}{2}(\E_{\sigma}+\E_{(1,-1)\cdot\sigma}),$
a fact we will use in the proofs of Theorems 5.2 and 5.4.

\begin{thm}\label{Theorem 5.2}  Suppose $\E_{\sigma}\not=0$;
fix a prime $p\nmid\stufe$ and $\overline p$ so that $p\overline p\equiv 1\ (\stufe)$.  Then 
$$\E_{\sigma}|T(p)=
\sum_{r=0}^n \chi(p^{n-r})p^{k(n-r)-(n-r)(n+r+1)/2}\bbeta_p(n,r)\,\E_{(p,\overline p^{r})\cdot\sigma}.$$
\end{thm}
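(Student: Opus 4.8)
The plan is to compute $\E_\sigma|T(p)$ directly, following the template of the proof of Theorem 4.1, but now exploiting that $p\nmid\stufe$. First I would substitute the explicit coset representatives from Proposition 2.1(a) together with the Fourier-type expansion $\E_\sigma(\tau)=\sum_{(M\ N)}\overline\chi(M,N)\det(M\tau+N)^{-k}$ of Proposition 3.1(b), where $(M\ N)$ runs over $SL_n(\Z)\backslash SL_n(\Z)(M_\sigma\ I)\Gamma_0(\stufe)$. Since $p\nmid\stufe$, the Remark after Proposition 2.1 lets me take $G\equiv I$ and $Y\equiv 0\ (\stufe)$. A direct slash computation on a single term shows
$$\det(M\tau+N)^{-k}\Big|\begin{pmatrix}X_r^{-1}\\&\tfrac1p X_r\end{pmatrix}\begin{pmatrix}G^{-1}&Y\,^tG\\&\,^tG\end{pmatrix}=p^{-nk/2}\det(M'\tau+N')^{-k},$$
with $(M'\ N')=(MX_r^{-1}G^{-1}\ \ MX_r^{-1}Y\,^tG+\tfrac1p NX_r\,^tG)$; the $\det(C\tau+D)$ factors cancel against the substitution Jacobian. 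Combined with the normalizing factor $p^{n(k-n-1)/2}$ this produces the global constant $p^{-n(n+1)/2}$.

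The next step is to reorganize $\E_\sigma|T(p)$ as an Eisenstein series. Since $\M_k^{(n)}(\stufe,\chi)$ is Hecke-stable and the Eisenstein subspace is the span of the $\det(\cdot)^{-k}$ terms it contains, $\E_\sigma|T(p)$ is again a combination $\sum_{\sigma'}c(\sigma,\sigma')\E_{\sigma'}$, and I would determine the coefficients by collecting the transformed pairs $(M'\ N')$ according to the $p$-rank $r$ of their integral primitive representative. Clearing the denominators coming from the $\tfrac1p$-blocks of $X_r^{-1}$ and $\tfrac1p X_r$ converts $\det(M'\tau+N')^{-k}$ into $(\det P)^{-k}\det(\tilde M\tau+\tilde N)^{-k}$ for an integral primitive coprime symmetric pair $(\tilde M\ \tilde N)$, the power of $p$ in $\det P$ being governed by $r$. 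Reducing modulo $\stufe$ (using $G\equiv I$, $Y\equiv 0$, and the fact that $M_\sigma$ is acted on only by units) I would identify the double coset of $(\tilde M\ \tilde N)$: this is where the group action of Proposition 5.1 enters, and I expect the resulting type to be exactly $(p,\overline p^{\,r})\cdot\sigma$, so that only $\sigma'=(p,\overline p^{\,r})\cdot\sigma$ occurs for a given $r$.

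Finally I would carry out the counting and character bookkeeping. For each $r$, $G$ ranges over $SL_n(\Z)/\K_r(p)$, of which there are $\bbeta_p(n,r)$ cosets by Lemma 6.2, while $Y$ ranges over $\Y_r(p)$, giving $p^{r(r+1)/2}$ choices; together with the clearing factor $p^{k(n-r)}$, the global constant $p^{-n(n+1)/2}$, and the character $\chi(p^{n-r})$ from Proposition 2.1(a), the total coefficient is
$$\chi(p^{n-r})\,p^{-n(n+1)/2}\,p^{r(r+1)/2}\,p^{k(n-r)}\,\bbeta_p(n,r).$$
Using the identity $n(n+1)-r(r+1)=(n-r)(n+r+1)$ this collapses to $\chi(p^{n-r})p^{k(n-r)-(n-r)(n+r+1)/2}\bbeta_p(n,r)$, as claimed, once I check that $\overline\chi(M,N)$ matches the character weight $\overline\chi(\tilde M,\tilde N)$ attached to the target pair under the reduction mod $\stufe$.

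I expect the main obstacle to be the double-coset identification in the second step: the naive reduction $M'\equiv M\,\diag(\overline p\,I_r,I_{n-r})\ (\stufe)$ presents the change as a rank-$r$ scaling, whereas the group action $(p,\overline p^{\,r})\cdot M_\sigma=p\,\diag(\overline p^{\,r},I)M_\sigma\diag(\overline p^{\,r},I)$ is a global scaling together with a rank-one adjustment. Reconciling these requires absorbing the denominator-clearing matrix $P$ (whose reduction mod $\stufe$ is a unit matrix) and a simultaneous change of both the left $SL_n(\Z)$- and right $\Gamma_0(\stufe)$-representatives, exactly so that the aggregate weight and type coincide with those of $(p,\overline p^{\,r})\cdot\sigma$. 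Verifying that the contributions collected by final $p$-rank fill out the target coset with uniform multiplicity --- rather than smearing a single Proposition 2.1 index across several $\sigma'$ --- is the delicate combinatorial point, and is the analogue of the rank-counting via $\bbeta_p$ and $\bmu_p$ carried out in Theorem 4.1.
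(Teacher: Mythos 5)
Your overall skeleton (Proposition 2.1(a) applied to the expansion in Proposition 3.1(b), the slash computation, the counts of $G$ via Lemma 6.2 and of $Y$, and the closing arithmetic $n(n+1)-r(r+1)=(n-r)(n+r+1)$) matches the paper's, and your global constant $p^{-n(n+1)/2}$ is correct. But the step you describe as clearing denominators with ``the power of $p$ in $\det P$ being governed by $r$'' is false term-by-term, and this is exactly where the content of the proof lies. Write $M=(M_1'\ M_2')$, $N=(N_1'\ N_2')$ with $M_1',N_1'$ of size $n\times r$, and set $s=\rank_p(M_1'\ N_2')$. The integral pair $(pMX_r^{-1}G^{-1}\ \ pMX_r^{-1}Y\,^tG+NX_r\,^tG)$ is divisible by $p$ in exactly $n-s$ rows (after left $SL_n(\Z)$ reduction), so the clearing factor is $p^{ks}$, where $s$ depends on the individual term $(M\ N)$ and not just on the Hecke index $r$; for fixed $r$ the terms of $\E_{\sigma}$ realize several values of $s$ (already for $n=1$, $r=0$: terms with $p\mid N$ have $s=0$, generic terms have $s=1$). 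Consequently your uniform forward bookkeeping --- each $(G,Y)$ carrying every term of $\E_{\sigma}$ to a term of $\E_{(p,\overline p^{\,r})\cdot\sigma}$ with factor $p^{k(n-r)}$ --- is precisely the ``delicate combinatorial point'' you flag at the end, and it cannot be verified in the forward direction: it only becomes true after extracting the coefficient of a single, well-chosen representative pair in each target coset.

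The paper closes this gap by reversing the count and by a normalization your proposal lacks: since $p\nmid\stufe$, Proposition 3.2 allows one to replace $\gamma_{\sigma'}$ by $\gamma_{\sigma'}\delta$ with $\delta\in\Gamma(\stufe)$, so that effectively $p^3\mid M_{\sigma'}$. Writing a candidate preimage of $(M_{\sigma'}\ I)$ as $(M\ N)=X_{n-s}E\bigl(\frac{1}{p}M'GX_r\ \,(\,^tG^{-1}-\frac{1}{p}M'GY)X_r^{-1}\bigr)$ with $E\in\,^t\K_{n-s}(p)\backslash SL_n(\Z)$, the divisibility $p^3\mid M'$ makes $M\equiv 0\ (p)$ automatic, and integrality plus coprimality of $N$ then force $n-s=r$ and $E\,^tG^{-1}\in\,^t\K_r(p)$, independently of $Y$. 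This is what pins the multiplicity at exactly $\bbeta_p(n,r)\,p^{r(r+1)/2}$ and shows that every surviving preimage satisfies $(M\ N)\equiv\bigl(\frac{1}{p}X_rM'X_r\ \,I\bigr)\ (\stufe)$, after which the coset identification you want follows from Proposition 3.3 together with a unit matrix $G_r\equiv\diag(\overline p^{\,r-1},pI_{r-1},I)\ (\stufe)$, giving $\bigl(\frac{1}{p}X_rM'X_r\ I\bigr)\in GL_n(\Z)(M_{\sigma}\ I)\Gamma_0(\stufe)$ if and only if $(M'\ I)\in GL_n(\Z)((p,\overline p^{\,r})\cdot M_{\sigma}\ I)\Gamma_0(\stufe)$. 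Note also that this extraction is carried out against $GL_n(\Z)$-orbits, so you additionally need the symmetrization $\E_{\sigma}=\frac{1}{2}(\E_{\sigma}+\E_{(1,-1)\cdot\sigma})$ established just before Theorem 5.2. Without the $p^3\mid M_{\sigma'}$ device (or an equivalent one), your plan does not close.
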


\begin{proof}  Write $\K_r$ for $\K_r(p)$, $X_r$ for $X_r(p)$, $\bbeta(m,r)$ for $\bbeta_p(m,r).$

When $\E_{\sigma'}\not=0$, set
$$\delta=\begin{pmatrix} I&0\\-M_{\sigma'}&I\end{pmatrix} \begin{pmatrix} I&0\\(p\overline p)^3M_{\sigma'}&I\end{pmatrix}.$$
So $\delta\in\Gamma(\stufe)$, and hence by Proposition 3.2, $\E_{\gamma_{\sigma}}=\E_{\gamma_{\sigma}\delta}.$
Thus we may replace $\gamma_{\sigma'}$
by $\gamma_{\sigma'}\delta$
(effectively, we may assume $p^3|M_{\sigma'}$).

Now,
by Proposition 2.1, we have
\begin{align*}
\E_{\sigma}(\tau)|T(p)
& =p^{kn-n(n+1)/2} \sum_{\substack{(M\,N)\\ r,G,Y}} \chi(p^{n-r})\overline\chi(M,N)\\
&\ \cdot\det(pMX_r^{-1}G^{-1}\tau+pMX_r^{-1}Y\,^tG+NX_r\,^tG)^{-k}.
\end{align*}
Here $SL_n(\Z)(M\ N)$ varies over $SL_n(\Z)(M_{\sigma}\ I)\Gamma_0(\stufe)$,
$0\le r\le n$, and for each $r$,
$G$ varies over $SL_n(\Z)/\K_r$, $Y$ varies over $\Y_r(p)$; recall that since
$p\nmid\stufe$, we can take $G\equiv I\ (\stufe)$, $Y\equiv 0\ (\stufe)$.
Write $M=(M_1'\ M_2')$, $N=(N_1'\ N_2')$ with $M_1', N_1'$ $n\times r$, and let
$s=\rank_p(M_1'\ N_2')$.  
We can use left multiplication from $SL_n(\Z)$ to
adjust our representative $(M\ N)$ to assume that
$$M=\begin{pmatrix} pM_1&M_2\\M_3&M_4\end{pmatrix},\ N=\begin{pmatrix} N_1&pN_2\\N_3&N_4\end{pmatrix}$$ where 
$M_3, N_3$ are $s\times r$; so $\rank_p(M_3\ N_4)=s$,
and $\rank_p(M_2\ N_1)=n-s$ since $\rank_q(M\ N)=n$.  Set
$$(M'G\ N'\,^tG^{-1})=X^{-1}_{n-s} (pMX_r^{-1}\ NX_r+pMX_r^{-1}Y);$$
so 
\begin{align*}
\rank_p(M'\ N')&=\rank_p(pX^{-1}_{n-s}MX_r^{-1}\ X_{n-s}^{-1}NX_r)\\
&=\rank_p\begin{pmatrix} M_1&M_2&N_1&N_2\\ M_3&pM_4&pN_3&N_4\end{pmatrix}\\
&=n,
\end{align*}
and hence $(M',N')=1$.  Note that
$$\det(M'\tau+N')^{-k}=p^{k(n-s)}\det(pMX_r^{-1}G^{-1}\tau+pMX_r^{-1}Y\,^tG+NX_r\,^tG)^{-k}.$$

We know 
\begin{align*}
\E_{\sigma}|T(p)&=\frac{1}{2}(\E_{\sigma}+\E_{(1,-1)\cdot\sigma})|T(p)\\
&=\frac{1}{2}\sum_{\sigma'} c_{\sigma,\sigma'}(\E_{\sigma'}+\E_{(1,-1)\cdot\sigma'})
\end{align*}
for some $c_{\sigma,\sigma'}\in\C$.
So to compute $c_{\sigma,\sigma'}$ for any given $\sigma'$, we first identify those $r,s,G,Y$ and
$$SL_n(\Z)(M\ N)\in GL_n(\Z)(M_{\sigma}\ I)\Gamma_0(\stufe)$$
so that
\begin{align*}
&X_{n-s}^{-1}(pMX_r^{-1}G^{-1}\ NX_r\,^tG+pMX_r^{-1}Y\,^tG) \\
&\quad \in SL_n(\Z)(M_{\sigma'}\ I)\cup SL_n(\Z)((1,-1)\cdot M_{\sigma'}\ I).
\end{align*}
Equivalently, we identify $r,s,G,Y$ and $SL_n(\Z)$-equivalence classes
$$SL_n(\Z)X_{n-s}E\left(\frac{1}{p}M'GX_r\ \,\left(^tG^{-1}-\frac{1}{p}M'GY\right)X_r^{-1}\right)$$
that lie in $GL_n(\Z)(M_{\sigma}\ I)\Gamma_0(\stufe)$, where $M'=M_{\sigma'}$ or $(1,-1)\cdot M_{\sigma'}$,
and $E\in SL_n(\Z).$
Note that we only need to consider $E\in\,^t\K_{n-s}\backslash SL_n(\Z)$, as
$SL_n(\Z)X_{n-s}E=SL_n(\Z)X_{n-s}$ if and only if $E\in\,^t\K_{n-s};$ note also that we can take $E\equiv I\ (\stufe).$

Take $M'=M_{\sigma'}$ or $(1,-1)\cdot M_{\sigma'}$ (some $\sigma'$).
Recall that we can assume $p^3|M_{\sigma'}$, so with
$$(M\ N)=X_{n-s}E\left(\frac{1}{p}M'GX_r\ \,\left(^tG^{-1}-\frac{1}{p}M'GY\right)X_r^{-1}\right),$$
we have $M\equiv 0\ (p)$, and we have $N$ integral with $\rank_pN=n$ if and only if $n-s=r$ and $E\,^tG^{-1}\in\,^t\K_r$
(independent of the choice of $Y$).
We know there are $p^{r(r+1)/2}$ choices for $Y$, and
by Lemma 6.2, $\bbeta(n,r)$ choices for $G$.
also, with $n-s=r$ and $E\in\,^t\K_r\,^tG$, we have
$(M\ N)\equiv \left(\frac{1}{p}X_rM'X_r\ \,I\right)\ (\stufe).$
So when 
$$\left(\frac{1}{p}X_rM'X_r\,\  I\right)\in GL_n(\Z)(M_{\sigma}\ I)\Gamma_0(\stufe),$$
we get a contribution of 
$$\overline\chi(\overline pX_rM_{\sigma'}X_r,I)\chi(p^{n-r})p^{k(n-r)+(r-n)(r+n+1)/2}\bbeta(n,r)$$
toward $c_{\sigma,\sigma'}$ (and a contribution of 0 otherwise).

To determine when $\left(\frac{1}{p}X_rM'X_r\,\ I\right)\in GL_n(\Z)(M_{\sigma}\ I)\Gamma_0(\stufe)$,
take $E'\in SL_n(\Z)$, $\gamma\in \Gamma_0(\stufe)$; then take $E''\in SL_n(\Z)$, $\gamma'\in\Gamma_0(\stufe)$
so that $E''\equiv I\ (p)$, $E''\equiv E'\ (\stufe)$, $\gamma'\equiv I\ (p)$, $\gamma'\equiv\gamma\ (\stufe).$
Then set $E_r'=X_r^{-1}E''X_r$,
$$\gamma_r=\begin{pmatrix}\frac{1}{p}X_r\\&X_r^{-1}\end{pmatrix} \gamma' \begin{pmatrix} pX_r^{-1}\\&X_r\end{pmatrix};$$
so $E_r'\in SL_n(\Z)$, $\gamma_r\in\Gamma_0(\stufe)$.
Then $\left(\frac{1}{p}X_rM'X_r\ I\right)$ is equal to
$E'(M_{\sigma}\ I)\gamma$ or to $E'((1,-1)\cdot M_{\sigma}\ I)\gamma$
if and only if 
$$(M'\ I)\equiv E_r'X_r^{-1}(M_{\sigma}\ I)\begin{pmatrix} pX_r^{-1}\\&X_r\end{pmatrix}\gamma_r\ (\stufe)$$
or
$$(M'\ I)\equiv E_r'G_{\pm}X_r^{-1}(M_{\sigma}\ I)\begin{pmatrix} pX_r^{-1}\\&X_r\end{pmatrix}\gamma_{\pm}\gamma_r\ (\stufe).$$
Hence using Proposition 3.3, we have
$\left(\frac{1}{p}X_rM'X_r\ I\right)\in GL_n(\Z)(M_{\sigma}\ I)\Gamma_0(\stufe)$ if and only if
$(M'\ I)\in GL_n(\Z)(pX_r^{-1}M_{\sigma}X_r^{-1}\,\ I)\Gamma_0(\stufe).$  Note that when $r>0$, we can find
$G_r\in SL_n(\Z)$ so that $G_r\equiv\begin{pmatrix} \overline p^{r-1}\\&pI_{r-1}\\&&I\end{pmatrix}\ (\stufe),$
and then
$$G_r(pX_r^{-1}M_{\sigma}X_r^{-1}\ I)\begin{pmatrix} ^tG_r\\&G_r^{-1}\end{pmatrix}
\equiv ((p,\overline p^r)\cdot M_{\sigma}\ I)\ (\stufe).$$
Thus $\left(\frac{1}{p}X_rM'X_r\ I\right)\in GL_n(\Z)(M_{\sigma}\ I)\Gamma_0(\stufe)$ if and only if
$$(M'\ I)\in GL_n(\Z)((p,\overline p^r)\cdot M_{\sigma}\ I)\Gamma_0(\stufe).$$
Also, by Proposition 3.2, we have $\overline\chi(\gamma_r) \E_{\sigma'}=\E_{(p,\overline p^r)\cdot\sigma}.$
Therefore $\E_{\sigma}|T(p)=\sum_{r=0}^n \chi(p^{n-r)} p^{k(n-r)+(r-n)(r+n+1)/2}\E_{(p,\overline p^r)\cdot \sigma},$
as claimed.
\end{proof}

\smallskip\noindent
{\bf Definition.}  Let $\U_{\stufe}=(\Z/\stufe\Z)^{\times}$, $\psi\in\widehat{\U_{\stufe}\times\U_{\stufe}}$, the
character group of $\U_{\stufe}\times\U_{\stufe}$.  Set
$$\E_{\sigma,\psi}=\sum_{v,w\in\U_{\stufe}}\overline\psi(v,w) \E_{(v,w)\cdot\sigma}.$$
\smallskip

Below we will show that when non-zero, $\E_{\sigma,\psi}$ is an eigenform for all $T(p), T_j(p^2)$, $p$ prime not dividing $\stufe$.
Note that by orthogonality of characters,
$$\spn\{\E_{u\cdot\sigma}:\ u\in\U_{\stufe}\times\U_{\stufe}\ \}
=\spn\{\E_{\sigma,\psi}:\ \psi\in\widehat{\U_{\stufe}\times\U_{\stufe}}\ \}.$$
Also, we have $\psi(v,w)=\psi_1(v)\psi_2(w)$ where $\psi_1,\psi_2$ are characters on $\U_{\stufe}$; using Proposition 5.1,
when $\stufe$ is square-free we have 
$$\E_{\sigma,\psi}=\sum_{v,w\in\U_{\stufe}} \overline\psi(v,w)\left(\prod_{0<d\le n}\overline \chi_{\stufe_d}(v^d)\right)
\overline\chi_{\stufe_n}(w^2)\,\E_{\sigma},$$
so $\E_{\sigma,\psi}=0$ unless $\E_{\sigma}\not=0$ and $\psi_1=\prod_{0<d\le n}\overline\chi_{\stufe_d}^d$,
$\psi_2=\overline\chi_{\stufe_n}^2$ where $\stufe_d$ is the product of all primes $q|\stufe$ so that $\rank_qM_{\sigma}=d.$

\begin{cor}\label{Corollary 5.3} 
Suppose $\E_{\sigma}\not=0$, and let $p$ be a prime with $p\nmid\stufe$.
Let $\psi$ be a character on  $\U_{\stufe}\times\U_{\stufe}$; so $\psi(v,w)=\psi_1(v)\psi_2(w)$ where $\psi_1,\psi_2$ are characters on $\U_{\stufe}$.
Then 
$\E_{\sigma,\psi}|T(p)=\lambda_{\sigma,\psi}(p)   \E_{\sigma,\psi}$
where
$$\lambda_{\sigma,\psi}(p)
=\psi_1(p)\overline\psi_2(p^n)\cdot\prod_{i=1}^n (\psi_2\chi(p)p^{k-i}+1).$$
When $\stufe$ square-free, $\E_{\sigma}|T(p)=\lambda_{\sigma}(p)\E_{\sigma}$ and
$\widetilde\E_{\sigma}|T(p)=\lambda_{\sigma}(p)\widetilde\E_{\sigma}$
where $\widetilde \E_{\sigma}$ is as in Corollary 4.3 and
$$\lambda_{\sigma}(p)=\left(\prod_{0<d\le n}\chi_{\stufe_d}(p^d)\right)\,\prod_{i=1}^n\left(\chi(p)\overline\chi_{\stufe_n}(p^2)p^{k-i}+1\right).$$

\end{cor}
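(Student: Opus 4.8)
The plan is to compute $\E_{\sigma,\psi}|T(p)$ directly from the definition $\E_{\sigma,\psi}=\sum_{v,w\in\U_{\stufe}}\overline\psi(v,w)\,\E_{(v,w)\cdot\sigma}$ together with Theorem 5.2. Since the computation in Theorem 5.2 is valid for any $\gamma=\begin{pmatrix}I&0\\M&I\end{pmatrix}$, not merely for $\gamma_{\sigma}$, it yields $\E_{(v,w)\cdot\sigma}|T(p)=\sum_{r=0}^{n}c_r\,\E_{(p,\overline p^{\,r})\cdot(v,w)\cdot\sigma}$, where $c_r=\chi(p^{n-r})p^{k(n-r)-(n-r)(n+r+1)/2}\bbeta_p(n,r)$ does not depend on $(v,w)$. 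The structural fact I will lean on is that the map of Proposition 5.1 is a genuine action of $\U_{\stufe}\times\U_{\stufe}$, so that $(p,\overline p^{\,r})\cdot(v,w)\cdot\sigma=(pv,\overline p^{\,r}w)\cdot\sigma$.

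Next I would interchange the $(v,w)$- and $r$-summations and reindex by $v'=pv$, $w'=\overline p^{\,r}w$, which again range over $\U_{\stufe}$ as $v,w$ do. Writing $\overline\psi(v,w)=\overline\psi(\overline p\,v',p^{r}w')=\psi_1(p)\,\overline\psi_2(p^{r})\,\overline\psi(v',w')$, the inner sum over $(v',w')$ is exactly $\E_{\sigma,\psi}$; hence $\E_{\sigma,\psi}$ is a $T(p)$-eigenform with $\lambda_{\sigma,\psi}(p)=\psi_1(p)\sum_{r=0}^{n}c_r\,\overline\psi_2(p^{r})$ (the relation holds trivially when $\E_{\sigma,\psi}=0$).

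The substance of the argument is the simplification of this sum into the product in the statement. Substituting $s=n-r$ and using $\bbeta_p(n,r)=\bbeta_p(n,s)$ together with $\overline\psi_2(p^{\,n-s})=\overline\psi_2(p^{n})\psi_2(p^{s})$ gives $\lambda_{\sigma,\psi}(p)=\psi_1(p)\overline\psi_2(p^{n})\sum_{s=0}^{n}(\psi_2\chi)(p^{s})\,p^{ks-ns+s(s-1)/2}\bbeta_p(n,s)$. This last sum is an instance of the Gaussian ($q$-)binomial theorem $\prod_{j=0}^{n-1}(1+tp^{j})=\sum_{s=0}^{n}p^{s(s-1)/2}\bbeta_p(n,s)\,t^{s}$ applied with $t=\psi_2\chi(p)\,p^{k-n}$; reindexing $i=n-j$ turns the product into $\prod_{i=1}^{n}(\psi_2\chi(p)p^{k-i}+1)$, which completes the first formula. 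I expect this binomial identity to be the only genuinely nontrivial point; everything else is bookkeeping of exponents and characters.

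For square-free $\stufe$ I would instead substitute Proposition 5.1 into Theorem 5.2 at the outset: there every translate satisfies $\E_{(p,\overline p^{\,r})\cdot\sigma}=\chi_{\stufe_n}(p^{2r})\bigl(\prod_{0<d\le n}\overline\chi_{\stufe_d}(p^{d})\bigr)\E_{\sigma}$, so the whole Theorem 5.2 sum collapses to a scalar times $\E_{\sigma}$ and $\E_{\sigma}$ is itself a $T(p)$-eigenform. The same $s=n-r$ substitution and $q$-binomial identity (now with $t=\chi(p)\overline\chi_{\stufe_n}(p^{2})p^{k-n}$) produce $\prod_{i=1}^{n}(\chi(p)\overline\chi_{\stufe_n}(p^{2})p^{k-i}+1)$; it then remains to check that the leftover prefactor $\chi_{\stufe_n}(p^{2n})\prod_{0<d\le n}\overline\chi_{\stufe_d}(p^{d})$ equals $\prod_{0<d\le n}\chi_{\stufe_d}(p^{d})$. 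The $d=n$ contributions combine to $\chi_{\stufe_n}(p^{n})$, while for $0<d<n$ Proposition 3.6 gives $\chi_q^{2}=1$ for every prime $q\mid\stufe_d$, whence $\overline\chi_{\stufe_d}=\chi_{\stufe_d}$ there; this gives the square-free eigenvalue. Finally, for $\widetilde\E_{\sigma}|T(p)=\lambda_{\sigma}(p)\widetilde\E_{\sigma}$: because $T(p)$ commutes with every $T(q)$ for $q\mid\stufe$, and $\widetilde\E_{\sigma}$ is by Corollary 4.3 the unique (up to scalar) simultaneous $T(q)$-eigenform with its eigenvalue-tuple, $\widetilde\E_{\sigma}|T(p)$ must be a scalar multiple of $\widetilde\E_{\sigma}$; comparing the coefficient of $\E_{\sigma}$, which occurs with coefficient $1$ in $\widetilde\E_{\sigma}$ and is a $T(p)$-eigenform of eigenvalue $\lambda_{\sigma}(p)$, identifies that scalar as $\lambda_{\sigma}(p)$.
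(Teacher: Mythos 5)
Your proposal is correct and follows essentially the same route as the paper: apply Theorem 5.2 termwise to the translates $\E_{(v,w)\cdot\sigma}$, reindex the $(v,w)$-sum to pull out $\psi_1(p)\overline\psi_2(p^r)$, evaluate the remaining $r$-sum as a product, then handle the square-free case via Proposition 5.1 together with Proposition 3.6, and get the $\widetilde\E_{\sigma}$ statement from commutativity of the Hecke operators plus the multiplicity-one result of Corollary 4.3. The only cosmetic difference is that you invoke the Gaussian binomial identity directly, whereas the paper derives the same product form by the recursion $\bbeta_p(n,r)=p^r\bbeta_p(n-1,r)+\bbeta_p(n-1,r-1)$, which is just the inductive proof of that identity.
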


\begin{proof}  Write $\bbeta(m,r)$ for $\bbeta_p(m,r)$.
As in the proof of Theorem 5.2, we can assume $p^3|M_{\sigma}$.  Identify $p^{-1}$ with
$\overline p$ where $p\overline p\equiv 1\ (\stufe).$

With $v,w$ varying over $\U_{\stufe}$ and $r$ varying so that $0\le r\le n$,  we have
\begin{align*}
\E_{\sigma,\psi}|T(p)
&=p^{kn-n(n+1)/2}\sum_{v,w,r} \overline\psi(v,w)\chi(p^{n-r})p^{-kr+r(r+1)/2}\bbeta(n,r)\,\E_{(pv,\overline p^rw)\cdot\sigma}.
\end{align*}
Making the change of variables $v\mapsto \overline pv$ and $w\mapsto p^r w$, we get
$$
\E_{\sigma,\psi}|T(p)
=\psi_1(p)\chi(p^n) p^{kn-n(n+1)/2} S(n,k)\,\E_{\sigma,\psi}$$
where
$$S(n,k)=\sum_{r=0}^n \psi_2\chi(\overline p^r)p^{-kr+r(r+1)/2}\bbeta(n,r).$$
Using that $\bbeta(n,r)=p^r\bbeta(n-1,r)+\bbeta(n-1,r-1)$, we find that
\begin{align*}
S(n,k)&=(\psi_2\chi(\overline p)p^{1-k}+1) S(n-1,k-1)\\
&=\prod_{i=1}^n(\psi_2\chi(\overline p)p^{i-k}+1)\\
&=\psi_2\chi(\overline p^n)p^{-nk+n(n+1)/2}\prod_{i=1}^n(\psi_2\chi(p)p^{k-i}+1).
\end{align*}

Now suppose $\stufe$ is square-free.  With $\psi_1=\prod_{0<d\le n}\overline\chi_{\stufe_d}^d$ and 
$\psi_2=\overline\chi_{\stufe_n}^2$, we have $\E_{\sigma,\psi}=|\U_{\stufe}|^2\cdot\E_{\sigma}$;
recalling that $\chi_{\stufe_d}=\overline\chi_{\stufe_d}$ for $0<d<n$ (see Proposition 3.6),
the above result gives us $\E_{\sigma}|T(p)=\lambda_{\sigma}(p)\E_{\sigma}$, as claimed.  We also have
$\widetilde\E_{\sigma}=\sum_{\alpha\ge\sigma\,(\stufe)}a_{\sigma,\alpha}\E_{\alpha}$ with
$a_{\sigma,\sigma}=1$; since the Hecke operators commute, the multiplicity-one result of Corollary 4.3
tells us $\widetilde\E_{\sigma}=\lambda_{\sigma}(p)\widetilde\E_{\sigma}.$
\end{proof}

\begin{thm}\label{Theorem 5.4}  With $p$ a prime not dividing $\stufe$,
\begin{align*}
\E_{\sigma}|T_j(p^2) &= \bbeta_p(n,j)
\sum_{r+s\le j}\chi(p^{j-r+s})p^{k(j-r+s)-(j-r)(n+1)}\\
&\ \cdot\bbeta_p(j,r)\bbeta_p(j-r,s)\sym_p(j-r-s)
\E_{(1,p^{s-r})\cdot\sigma}
\end{align*}
(where $\sym_p(t)$ is the number of invertible, symmetric $t\times t$ matrices modulo $p$).
\end{thm}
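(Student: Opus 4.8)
The plan is to mirror the proof of Theorem 5.2 closely, replacing the expansion of $T(p)$ coming from Proposition 2.1(a) with the expansion of $T_j(p^2)$ coming from Proposition 2.1(b). First I would write out $\E_{\sigma}|T_j(p^2)$ using Proposition 2.1(b): this produces the normalization $p^{j(k-n-1)}$ together with a sum over $n_0,n_2$ with $n_0+n_2\le j$, over $G\in SL_n(\Z)/\K_{n_0,n_2}(p)$ (factored as $G=G_1G_2$ with $G'$ varying over a sub-$SL_{n'}(\Z)$), over $Y\in\Y_{n_0,n_2}(p^2)$, and over coprime symmetric pairs $(M\ N)$ representing $SL_n(\Z)(M_{\sigma}\ I)\Gamma_0(\stufe)$. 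Since $p\nmid\stufe$, I may take $G\equiv I\ (\stufe)$ and $Y\equiv 0\ (\stufe)$, so that the character value $\overline\chi(M,N)$ depends only on the $p$-adic scalings coming from the $X_{n_0,n_2}$ and $X_j$ matrices, and no twisted $\sym_p^{\chi}$ functions will intervene.

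As in Theorem 5.2, I would symmetrize using $\E_{\sigma}=\frac12(\E_{\sigma}+\E_{(1,-1)\cdot\sigma})$ so as to pass to $GL_n(\Z)$-equivalence classes, and I would assume $p^3\mid M_{\sigma}$ by replacing $\gamma_{\sigma}$ with $\gamma_{\sigma}\delta$ for a suitable $\delta\in\Gamma(\stufe)$. The core of the argument is then to reverse the construction: setting $(M'\ N')=X_{n_0,n_2}(M\ N)\begin{pmatrix} X_j^{-1}\\&X_j\end{pmatrix}\begin{pmatrix} G^{-1}&Y\,^tG\\&^tG\end{pmatrix}$ and analyzing its $p$-rank structure to confirm $(M',N')=1$, I would determine exactly which $(M\ N)$, $G$, $Y$ yield a pair $GL_n(\Z)$-equivalent to $(M_{\sigma'}\ I)$ or $((1,-1)\cdot M_{\sigma'}\ I)$ for a given target $\sigma'$. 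After putting $E\in\K_{n_0,n_2}\backslash SL_n(\Z)$ into normal form and $G$ into $\K_j$-coset form, Lemma 6.2 (and Lemma 6.3) would count the outer Grassmannian factor $\bbeta_p(n,j)$, while the analogues of Lemmas 6.4 and 6.5 used in the proof of Theorem 4.4 would count the two inner blocks, producing $\bbeta_p(j,r)\,\bbeta_p(j-r,s)$ with $r=n_0$ and $s=n_2$; the block $Y_1$ of $Y$, varying over invertible symmetric $(j-r-s)\times(j-r-s)$ matrices modulo $p$, contributes $\sym_p(j-r-s)$.

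To identify the group-action shift, I would imitate the computation near the end of the proof of Theorem 5.2 that rewrote $\left(pX_r^{-1}M_{\sigma}X_r^{-1}\ I\right)$ as $\left((p,\overline p^{\,r})\cdot M_{\sigma}\ I\right)$. Here, conjugating the normal-form pair by $X_{n_0,n_2}$ and $X_j$ and choosing $G_r\in SL_n(\Z)$ congruent modulo $\stufe$ to an appropriate diagonal matrix of $p$-powers should exhibit the pair as $GL_n(\Z)$-equivalent to $\left((1,p^{s-r})\cdot M_{\sigma}\ I\right)$. The $v$-component is trivial precisely because the $p^2$ built into $T_j(p^2)$ balances the $X_j$ and $X_j^{-1}$ scalings, leaving only the net $w$-shift $p^{s-r}$ measuring the discrepancy between the $pI_{n_0}$ and $\tfrac1p I_{n_2}$ blocks of $X_{n_0,n_2}$.

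The main obstacle will be assembling the exponent of $p$ correctly. It combines the Hecke normalization $p^{j(k-n-1)}$, the weight-$k$ determinant rescalings $\det(M'\tau+N')=p^{(\cdot)}\det(\cdots)$ forced by $X_{n_0,n_2}$ and $X_j$, and the counting exponents hidden inside Lemmas 6.2, 6.4, and 6.5; verifying that these collapse to $k(j-r+s)-(j-r)(n+1)$ and that the character contribution is exactly $\chi(p^{j-r+s})$ is the delicate bookkeeping. Because $p\nmid\stufe$ the inner sums are genuine counts of invertible symmetric matrices rather than the character-weighted sums $\sym_p^{\chi}$ that complicated Theorem 4.4, so I expect the computation to be substantially cleaner than its square-free-level counterpart.
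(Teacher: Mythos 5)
Your outline does follow the paper's own strategy for Theorem 5.4 (mirror Theorem 5.2: expand via Proposition 2.1(b), symmetrize with $\E_{\sigma}=\tfrac12(\E_{\sigma}+\E_{(1,-1)\cdot\sigma})$, reduce to $p^3\mid M_{\sigma'}$, reverse the construction, count, and identify the $(1,p^{s-r})$ shift), but two of your concrete steps are wrong, and they sit exactly where the content of the theorem lies. First, your displayed reversal
$$(M'\ N')=X_{n_0,n_2}(M\ N)\begin{pmatrix} X_j^{-1}\\&X_j\end{pmatrix}\begin{pmatrix} G^{-1}&Y\,^tG\\&^tG\end{pmatrix}$$
uses the wrong matrices on both sides. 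For $p\nmid\stufe$ each term of $T_j(p^2)$ is slashed by $\begin{pmatrix} X_{n_0,n_2}^{-1}\\&X_{n_0,n_2}\end{pmatrix}\begin{pmatrix} G^{-1}&Y\,^tG\\&^tG\end{pmatrix}$; the matrices $\begin{pmatrix} X_j^{-1}\\&X_j\end{pmatrix}$ you wrote give $T_j(p^2)$ only when the prime divides $\stufe$ (see the Remark after Proposition 2.1), so you have imported the Theorem 4.4 setup into the wrong case. Moreover the left-hand normalizing factor is not $X_{n_0,n_2}$: it is $X_{r,s}^{-1}$, where $r$ and $s$ are rank invariants extracted from the blocks of $(M\ N)$ (and $Y_1$) by row reduction, a priori unrelated to the summation indices $n_0,n_2$. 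By declaring $r=n_0$ and $s=n_2$ from the outset you assume precisely what must be proved, namely that only the ``diagonal'' terms of the $(n_0,n_2)$-sum survive.

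Second, and this is the missing idea: you plan to count $E$ by ``analogues of Lemmas 6.4 and 6.5,'' as in Theorem 4.4. No such stratified count occurs here, and attempting one would leave your deferred exponent bookkeeping unresolvable. Because $p\nmid\stufe$ one may assume $p^3\mid M_{\sigma'}$, so the reversed pair has $M\equiv 0\ (p)$ and $N\equiv X_{r,s}E\,^tG^{-1}X_{n_0,n_2}^{-1}\ (p)$; integrality and coprimality then \emph{force} $r=n_0$, $s=n_2$, and $E\in\,^t\K_{r,s}\,^tG_1$, so the choice of $E$ contributes nothing to the count. The entire count is then: $p^{rs}\bbeta_p(n,r)\bbeta_p(n-r,s)$ choices of $G_1$ (Lemma 6.3), $\bbeta_p(n-r-s,j-r-s)$ choices of $G_2$ (Lemma 6.2), and $p^{r(n-s+1)}\sym_p(j-r-s)$ choices of $Y$. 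The factor $\bbeta_p(n,j)\bbeta_p(j,r)\bbeta_p(j-r,s)$ in the statement is not a direct Grassmannian count at all; it falls out at the end from the identity
$$\bbeta_p(n,r)\bbeta_p(n-r,s)\bbeta_p(n-r-s,j-r-s)=\bbeta_p(n,j)\bbeta_p(j,r)\bbeta_p(j-r,s),$$
after which the exponent $k(j-r+s)-(j-r)(n+1)$ follows by multiplying the normalization $p^{j(k-n-1)}$, the determinant rescaling $p^{k(s-r)}$, and the counting exponents above. With the forcing step supplied and the slash matrices corrected, your plan becomes the paper's proof; as written, these are gaps rather than bookkeeping details.
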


\begin{proof}
To a large extent, we follow the line of reasoning in the proof of Theorem 5.2.
We write $\K_{r,s}$ for $\K_{r,s}(p)$, $X_{r,s}$ for $X_{r,s}(p)$, $\bbeta(m,r)$ for $\bbeta_p(m,r).$

As discussed at the beginning of the proof of Theorem 5.2, we can modify our representatives
$(M_{\sigma'}\ I)$ to assume $p^3|M_{\sigma'}$.  By Proposition 2.1, we have
\begin{align*}
\E_{\sigma}(\tau)|T_j(p^2)
&=\sum \chi(p^{j-n_0+n_2})\overline\chi(M,N) p^{j(k-n-1)}\\
&\ \cdot\det(MX_{n_0,n_2}^{-1}G^{-1}\tau+MX_{n_0,n_2}^{-1}Y\,^tG+NX_{n_0,n_2}\,^tG)^{-k}
\end{align*}
where $SL_n(\Z)(M\ N)$ varies over $SL_n(\Z)(M_{\sigma}\ I)\Gamma_0(\stufe)$,
$n_0,n_2\in\Z_{\ge0}$ vary subject to $n_0+n_2\le j$, $G\in SL_n(\Z)/\K_{n_0,n_2}$,
$Y\in\Y_{n_0,n_2}(p^2)$; 
note that we can assume that $Y\equiv 0\ (\stufe)$ and,
using Lemma 6.1, that $G\equiv I\ (\stufe)$.
Given $(M\ N)\in SL_n(\Z)(M_{\sigma}\ I)\Gamma_0(\stufe)$ and
$$Y=\begin{pmatrix} Y_0&Y_2&Y_3&0\\^tY_2&Y_1/p&0\\ ^tY_3&0\\ 0 \end{pmatrix}\in\Y_{n_0,n_2}(p^2),$$
we decompose $M,N$ into $3\times 4$ block matrices as follows.  First,
write $M=(M_9'\ M_{10}'\ M_{11}'\ M_{12}')$, $N=(N_9'\ N_{10}'\ N_{11}'\ N_{12}')$ where
$M_9', N_9'$ are $n\times n_0$, $M_{10}', N_{10}'$ are $n\times(j-n_0-n_2)$, and $M_{12}', N_{12}'$ are $n\times n_2$.
Let $s=\rank_p(M_9'\ M_{10}'\ N_{12}')$; using left multiplication from $SL_n(\Z)$, we can assume
$$M=\begin{pmatrix} pM_5'&pM_6'&M_7'&M_8'\\M_9&M_{10}&M_{11}&M_{12}\end{pmatrix},\ 
N=\begin{pmatrix} N_5'&N_6'&N_7'&pN_8'\\N_9&N_{10}&N_{11}&N_{12}\end{pmatrix},$$
where  $M_8',N_8'$ are $(n-s)\times n_2$, $M_9,N_9$ are $s\times n_0$, $M_{10},N_{10}$ are $s\times(j-n_0-n_2)$
(so $s=\rank_p(M_9\ M_{10}\ N_{12})$).  
Take $r$ so that
$$n-r=\rank_p\begin{pmatrix} M_5'&M_7'&N_6'+M_6'Y_1&N_7'&N_8'\\M_9&0&M_{10}Y_1&0&N_{12}\end{pmatrix}.$$
Thus using left multiplication from $SL_n(\Z)$ (leaving the lower $s$ rows fixed),
we can assume $p$ divides  the upper $r$ rows of $(M'_5\ M'_7\ N_6'+M_6'Y_1\ N'_7\ N'_8)$, and so
$$M=\begin{pmatrix} p^2M_1&pM_2&pM_3&M_4\\pM_5&pM_6&M_7&M_8\\M_9&M_{10}&M_{11}&M_{12}\end{pmatrix},\ 
N=\begin{pmatrix} N_1&N_2&pN_3&p^2N_4\\N_5&N_6&N_7&pN_8\\N_9&N_{10}&N_{11}&N_{12}\end{pmatrix}$$
with $M_1,N_1$ $r\times n_0$.  Also, since $p$ divides the upper $r$ rows of $N_6'+M_6'Y_1$,
we have $N_2\equiv -M_2Y_1\ (p)$ and
$$\rank_p\begin{pmatrix} M_5&M_7&N_6+M_6Y_1&N_7&N_8\\M_9&0&M_{10}Y_1&0&N_{12}\end{pmatrix}=n-r.$$
Note that we necessarily have $\rank_p(M_4\ N_1\ N_2)=r$; since $Y_1$ is invertible modulo $p$
and $N_2\equiv -M_2Y_1\ (p)$, we have $\rank_p(M_2\ M_4\ N_1)=r$.
Set
$$(M'G\ N'\,^tG^{-1})=
X_{r,s}^{-1}(MX_{n_0,n_2}^{-1}\ NX_{n_0,n_2}+MX_{n_0,n_2}^{-1}Y).$$
Hence $M',N'$ are integral, and with $Y'=\begin{pmatrix} Y_0&Y_2&Y_3&0\\ ^tY_2\\^tY_3\\0\end{pmatrix},$
we have
\begin{align*}
&\rank_p(M'\ N')\\
&\quad=\rank_p(M'G\ N'\,^tG^{-1})\\
&\quad=\rank_p(M'G\ \,N'\,^tG^{-1} - M'GY')\\
&\quad=\rank_p{\begin{pmatrix} M_1&M_2&M_3&M_4&N_1&(N_2+M_2Y_1)/p&N_3&N_4\\
M_5&0&M_7&0&0&N_6+M_6Y_1&N_7&N_8\\
M_9&0&0&0&0&M_{10}Y_1&0&N_{12}\end{pmatrix}}\\
&\quad\ge\rank_p{\begin{pmatrix} M_1&M_2&0&M_4&N_1&0&0&0\\
M_5&0&M_7&0&0&N_6+M_6Y_1&N_7&N_8\\
0&0&0&0&0&M_{10}Y_1&0&N_{12}\end{pmatrix}} \\
&\quad=n.
\end{align*}
So $(M'\ N')$ is an integral coprime pair, and
\begin{align*}
&\det(MX_{n_0,n_2}^{-1}G^{-1}\tau+NX_{n_0,n_2}\,^tG+MX_{n_0,n_2}^{-1}Y\,^tG)^{-k}\\
&\quad=p^{k(s-r)}\det(M'\tau+N')^{-k}.
\end{align*}

Now take an index $\sigma'$, $E\in\ ^t\K_{r,s}\backslash SL_n(\Z)$, and set
$$(M\ N)=X_{r,s}E(M_{\sigma'}GX_{n_0,n_2}\ \,^tG^{-1}X_{n_0,n_2}^{-1}-M'GYX_{n_0,n_2}^{-1})$$
where $M'$ is $M_{\sigma'}$ or $(1,-1)\cdot M_{\sigma'}.$
We first determine exactly when $(M\ N)$ is an integral coprime pair, and then we determine when
$$(M\ N)\in GL_n(\Z)(M_{\sigma}\ I)\Gamma_0(\stufe).$$

Recall that $G=G_1G_2$ (as described in Proposition 2.1) with $G_1$ varying over $SL_n(\Z)/\K_{n_0,n_2}$ and
$G_2=\begin{pmatrix} I_{n_0}\\&G'\\&&I_{n_2}\end{pmatrix}$
with $G'$ varying over $SL_{n'}(\Z)/\,^t\K'_{j'}$ where $n'=n-n_0-n_2$, $j'=j-n_0-n_2$; 
also
recall that since we can assume $p^3|M_{\sigma'}$,
we have $M\equiv 0\ (p)$.  
So to have $(M\ N)$ integral and coprime, we need $X_{r,s}E\,^tG^{-1}X_{n_0,n_2}$
integral and invertible modulo $p$.  
Since $X_{n_0,n_2}$ and $G_2$ commute,
to have $X_{r,s}E\,^tG^{-1}X_{n_0,n_2}$ integral, we need
 $E\,^tG_1^{-1}=\begin{pmatrix} N_1&N_2&N_3\\pN_4&N_5&N_6\\p^2N_7&pN_8&N_9\end{pmatrix}$
where $N_1$ is $r\times n_0$, $N_9$ is $s\times n_2$, which means we have $\rank_pN_1=n_0$,
$\rank_pN_9=s$.
Then
$$N\equiv\begin{pmatrix} N_1&pN_2&p^2N_3\\N_4&N_5&pN_6\\N_7&N_8&N_9\end{pmatrix}\ (p^2),$$
so to have $N$ invertible modulo $p$,
we need
$\rank_pN_1=r$, $\rank_pN_9=n_2$, meaning $r=n_0$, $s=n_2$; we then must have $\rank_pN_5=n-n_0-n_2$
since $E\,^tG_1^{-1}$ is invertible modulo $p$.
So suppose $r=n_0$, $s=n_2$, and fix $G_1$.  
Then we have
$X_{r,s}E\,^tG_1^{-1}X_{r,s}^{-1}$ integral if and only if $E\,^tG_1^{-1}\in\,^t\K_{r,s}$;
consequently, $(M\ N)$ is integral and coprime if and only if $r=n_0$, $s=n_2$, and
$E\in\,^t\K_{r,s}\,^tG_1$.

To summarise:  For any choices of $G_1\in SL_n(\Z)/\K_{n_0,n_2}$, $G_2\in SL_{n'}(\Z)/\K'_{j'}$, 
$Y\in \Y_{r,s}(p^2)$, we have 
$$(M\ N)=X_{r,s}E(M_{\sigma'}GX_{n_0,n_2}\ \,^tG^{-1}X_{n_0,n_2}^{-1}-M'GYX_{n_0,n_2}^{-1})$$
 integral and coprime if and only if 
$r=n_0$, $s=n_2$, and $E\in \,^t\K_{r,s}\,^tG_1.$
There are $p^{rs}\bbeta(n,r)\bbeta(n-r,s)$ choices for
 $G_1$, $\bbeta(n-r-s,j-r-s)$ choices for $G_2$, and $p^{r(r+1)+r(n-r-s)}\sym_p(j-r-s)=p^{r(n-s+1)}\sym_p(j-r-s)$ choices
for $Y$.

With $(M\ N)$ integral and coprime, we have $\overline \chi (M,N)=\overline \chi (X_{r,s}M_{\sigma'}X_{r,s},I)$,
and arguing as in the proof of Theorem 5.2, we find that
$$(M\ N)\in GL_n(\Z)(M_{\sigma}\ I)\Gamma_0(\stufe)$$ if and only if
$(M_{\sigma'}\ I)\in GL_n(\Z)((1,p^{r-s})\cdot M_{\sigma}\ I)\Gamma_0(\stufe).$
Also, $\E_{(1,p^{r-s})\cdot\sigma}=\overline\chi((1,p^{r-s})\cdot M_{\sigma},I) \E_{\sigma'}.$
Note also that
\begin{align*}
&\bbeta(n,r)\bbeta(n-r,s)\bbeta(n-r-s,j-r-s)\\
&\quad=\frac{\bmu(n,r)\bmu(n-r,s)\bmu(n-r-s,j-r-s)}{\bmu(r,r)\bmu(s,s)\bmu(j-r-s,j-r-s}
\frac{\bmu(j,r+s)}{\bmu(j,r+s)}\\
&\quad=\bbeta(n,j)\bbeta(j,r)\bbeta(j-r,s).
\end{align*}
For this the theorem follows.
\end{proof}

We now choose a different set of generators for the Hecke algebra to obtain more attractive eigenvalues.

\smallskip\noindent
{\bf Definitions.}  Let $p$ be a prime not dividing $\stufe$, and fix $j$, $1\le j\le n$.
As in \cite{HW}, we set
$$\widetilde T_j(p^2)=\sum_{0\le\ell\le j}\chi(p^{j-\ell})p^{(j-\ell)(k-n-1)}\bbeta_p(n-\ell,j-\ell) T_{\ell}(p^2)$$
where $T_0(p^2)$ is the identity map.
The effect of this averaging is to remove on $Y_1$ the condition that $p\nmid\det Y_1$
(where $Y\in\Y_{n_0,n_2}(p^2)$ is as described in Proposition 2.1).
For $u\in\U_{\stufe}$ and 
$\gamma=\begin{pmatrix} I&0\\M&I\end{pmatrix}\in Sp_n(\Z)$, we define
$$R(u)\E_{\gamma}=\E_{(1,u)\cdot\gamma},$$
and we extend $R(u)$ linearly to 
$\Eis_k^{(n)}(\stufe,\chi)$,
which we know is spanned by all such $\E_{\gamma};$ by Proposition 5.1, $R(u)$ is well-defined.
By Theorems 5.2 and 5.4, we see that $R(u)$ commutes with $T(p)$ and $T_j(p^2)$ ($p$ prime, $p\nmid\stufe$, $1\le j\le n$).
Thus
$$\{T(p), T_j(p^2), R(p):\ \text{prime } p\nmid\stufe,\ 0\le j\le n\ \}$$
generates a commutative algebra of operators on $\Eis_k^{(n)}(\stufe,\chi).$
Set
$$T'_j(p^2)=\sum_{i=0}^j(-1)^i p^{i(i-1)/2}\bbeta_p(n-j+i,i)\widetilde T_{j-i}(p^2)R(\overline p^i)$$
where $p\overline p\equiv 1\ (\stufe)$.
(Recall that $R(u)$ is defined for any $u\in\Z$ with $(u,\stufe)=1$, so $R(\overline p^i)$ makes sense.)

\begin{cor}\label{Corollary 5.5} 
We have
$$\E_{\sigma,\psi}|T'_j(p^2)=\lambda'_{j;\sigma,\psi}(p^2)\,\E_{\sigma,\psi}$$
where
$$\lambda'_{j;\sigma,\psi}(p^2)=\bbeta_p(n,j)p^{(k-n)j+j(j-1)/2}\chi(p^j)\prod_{i=1}^j(\psi_2\chi(p)p^{k-i}+1).$$
When $\stufe$ is $square-free$, we have $\E_{\sigma}|T'_j(p^2)=\lambda'_{j;\sigma}(p^2) \E_{\sigma}$
and $\widetilde\E_{\sigma}|T'_j(p^2)=\lambda'_{j;\sigma}(p^2) \widetilde\E_{\sigma}$ where
$$\lambda'_{j;\sigma}(p^2)=\bbeta_p(n,j)p^{(k-n)j+j(j-1)/2}\chi(p^j)\prod_{i=1}^j(\chi\overline\chi_{\stufe_n}^2(p)p^{k-i}+1).$$
\end{cor}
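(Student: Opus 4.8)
The plan is to establish that $\E_{\sigma,\psi}$ is a simultaneous eigenform for $R(u)$ and for every $T_\ell(p^2)$, and then to read off the $T'_j(p^2)$-eigenvalue from the definitions. First I would record the $R(u)$-eigenvalue: substituting $w\mapsto\overline u w$ in the defining sum for $\E_{\sigma,\psi}$ and using $(1,u)\cdot(v,w)\cdot\sigma=(v,uw)\cdot\sigma$ gives $R(u)\E_{\sigma,\psi}=\psi_2(u)\E_{\sigma,\psi}$. Next, applying Theorem 5.4 to each translate $(v,w)\cdot\sigma$, using $(1,p^{s-r})\cdot(v,w)\cdot\sigma=(v,p^{s-r}w)\cdot\sigma$, and then reindexing $w\mapsto p^{r-s}w$, I would show $\E_{\sigma,\psi}|T_\ell(p^2)=\Lambda_\ell\,\E_{\sigma,\psi}$, where $\Lambda_\ell$ is obtained from the right-hand side of Theorem 5.4 by replacing each $\E_{(1,p^{s-r})\cdot\sigma}$ with the scalar $\psi_2(p^{s-r})$; explicitly
\[
\Lambda_\ell=\bbeta_p(n,\ell)\sum_{r+s\le\ell}\chi(p^{\ell-r+s})\psi_2(p^{s-r})p^{k(\ell-r+s)-(\ell-r)(n+1)}\bbeta_p(\ell,r)\bbeta_p(\ell-r,s)\sym_p(\ell-r-s).
\]

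Since $T_\ell(p^2)$ and $R(p)$ commute (Theorems 5.2 and 5.4) and $T'_j(p^2)$ lies in the algebra they generate, $\E_{\sigma,\psi}$ is automatically a $T'_j(p^2)$-eigenform; the content is the value of the eigenvalue. Feeding the eigenvalues above into the definitions of $\widetilde T_m(p^2)$ and $T'_j(p^2)$ and using $R(\overline p^i)\E_{\sigma,\psi}=\overline\psi_2(p^i)\E_{\sigma,\psi}$, I would obtain
\[
\E_{\sigma,\psi}|T'_j(p^2)=\Big(\sum_{i=0}^j(-1)^i p^{i(i-1)/2}\bbeta_p(n-j+i,i)\,\overline\psi_2(p^i)\,\widetilde\lambda_{j-i}\Big)\E_{\sigma,\psi},
\]
where $\widetilde\lambda_m=\sum_{\ell=0}^m\chi(p^{m-\ell})p^{(m-\ell)(k-n-1)}\bbeta_p(n-\ell,m-\ell)\Lambda_\ell$. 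Thus proving the formula for $\lambda'_{j;\sigma,\psi}(p^2)$ reduces to a purely combinatorial identity in $p$, $\chi(p)$ and $\psi_2(p)$.

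I would evaluate this identity in two stages, paralleling \cite{HW} (with $\psi_2\chi$ now playing the role of the character there). First, for $\widetilde\lambda_m$ I would use the stated effect of the averaging in $\widetilde T_m(p^2)$—namely that it removes the condition $p\nmid\det Y_1$ on the block $Y_1$ of $\Y_{n_0,n_2}(p^2)$—to replace the factor $\sym_p(m-r-s)$ in $\Lambda_m$ by $p^{(m-r-s)(m-r-s+1)/2}$; equivalently this is the identity obtained by summing the $\sym_p$-weighted terms against the weights $\chi(p^{m-\ell})p^{(m-\ell)(k-n-1)}\bbeta_p(n-\ell,m-\ell)$. Then I would carry out the alternating sum over $i$: the factors $(-1)^i p^{i(i-1)/2}\bbeta_p(n-j+i,i)$ perform a M\"obius-type inversion that collapses the remaining $\bbeta_p$-sums, leaving the single product $\prod_{i=1}^j(\psi_2\chi(p)p^{k-i}+1)$ together with the prefactor $\bbeta_p(n,j)p^{(k-n)j+j(j-1)/2}\chi(p^j)$. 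This telescoping is the main obstacle and is where essentially all the bookkeeping lives; the cleanest route is to match it term-by-term with the corresponding computation in \cite{HW}.

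Finally I would treat the square-free case. There, for $\psi_1=\prod_{0<d\le n}\overline\chi_{\stufe_d}^d$ and $\psi_2=\overline\chi_{\stufe_n}^2$ we have shown $\E_{\sigma,\psi}=|\U_{\stufe}|^2\,\E_{\sigma}$, so the above yields $\E_{\sigma}|T'_j(p^2)=\lambda'_{j;\sigma}(p^2)\E_{\sigma}$ after substituting $\psi_2=\overline\chi_{\stufe_n}^2$, i.e. $\psi_2\chi(p)=\chi\overline\chi_{\stufe_n}^2(p)$. For $\widetilde\E_{\sigma}$: since the Hecke operators commute, the multiplicity-one result of Corollary 4.3 forces $\widetilde\E_{\sigma}$ to be a $T'_j(p^2)$-eigenform; writing $\widetilde\E_{\sigma}=\E_{\sigma}+\sum_{\alpha>\sigma\,(\stufe)}a_{\sigma,\alpha}\E_{\alpha}$ and comparing the coefficient of $\E_{\sigma}$ on both sides of $\widetilde\E_{\sigma}|T'_j(p^2)=\mu\,\widetilde\E_{\sigma}$ (each $\E_{\alpha}$ being itself a $T'_j(p^2)$-eigenform) forces $\mu=\lambda'_{j;\sigma}(p^2)$.
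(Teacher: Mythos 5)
Your proposal is correct and is essentially the paper's own argument: the paper likewise averages Theorem 5.4 over the $\U_{\stufe}\times\U_{\stufe}$-action to obtain the eigenvalue of $\widetilde T_j(p^2)$ on $\E_{\sigma,\psi}$, inserts the $R(\overline p^i)$-eigenvalue $\overline\psi_2(p^i)$ into the definition of $T'_j(p^2)$, and then performs exactly the two collapses you describe (the first amounting to $\bbeta_p$-identities together with Lemma 6.7, which is precisely the ``remove the condition $p\nmid\det Y_1$'' identity; the second to the identity $\sum_{i=0}^{m}(-1)^i p^{i(i-1)/2}\bbeta_p(m,i)=\delta_{m,0}$ followed by the recursion $\bbeta_p(m,s)=p^s\bbeta_p(m-1,s)+\bbeta_p(m-1,s-1)$, which yields the product $\prod_{i=1}^j(\psi_2\chi(p)p^{k-i}+1)$). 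The only differences are presentational: the paper proves these combinatorial identities directly rather than quoting the stated effect of the averaging or matching term-by-term against \cite{HW}, and it leaves the $\widetilde\E_{\sigma}$ statement implicit, whereas your coefficient-comparison argument (using that each $\E_{\alpha}$ in $\widetilde\E_{\sigma}$ is itself a $T'_j(p^2)$-eigenform and that $a_{\sigma,\sigma}=1$) supplies that step explicitly.
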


\begin{proof}  Write $\bbeta(m,r)$ for $\bbeta_p(m,r)$.
Using Theorem 5.4, averaging over $v,w\in\U_{\stufe}$ (and replacing $w$ by $wp^{r-s}$ inside the sum on $r,s$
in the formula of Theorem 5.4), we get
$\E_{\sigma,\psi}|\widetilde T_j(p^2)=\widetilde\lambda_{j;\sigma,\psi}(p^2) \E_{\sigma,\psi}$ where
\begin{align*}
\widetilde\lambda_{j;\sigma,\psi}(p^2)&= \sum_{\ell,r,s}\chi(p^{j+s-r})\psi_2(p^{s-r})p^{j(k-n-1)+r(n+1)+k(s-r)}\\
&\quad \cdot \bbeta(n,\ell)\bbeta(n-\ell,j-\ell)\bbeta(\ell,r)\bbeta(\ell-r,s) \sym_p(\ell-r-s)
\end{align*}
where $0\le \ell\le j$, $0\le r+s\le \ell$, or equivalently, $0\le r+s\le j$, $r+s\le \ell\le j$.
We make the change of variables $\ell\mapsto j-\ell$ and use that
\begin{align*}
&\bbeta(n,j-\ell)\bbeta(n-j+\ell,\ell)\bbeta(j-\ell,r)\bbeta(j-\ell-r,s)\frac{\bmu(j,\ell)}{\bmu(j,\ell)}\\
&\quad = \bbeta(n,j)\bbeta(j,r)\bbeta(j-r,s)\bbeta(j-r-s,\ell)\\
&\quad = \bbeta(n,j)\bbeta(j,r)\bbeta(j-r,s)\bbeta(j-r-s,j-\ell-r-s).
\end{align*}
Now we make the change of variable $\ell\mapsto j-\ell-r-s$, we get
\begin{align*}
\widetilde\lambda_{j;\sigma,\psi}(p^2)&=\bbeta(n,j)\sum_{0\le r+s\le j}\chi(p^{j+s-r})\psi_2(p^{s-r})p^{j(k-n-1)+r(n+1)+k(s-r)}\\
&\quad\bbeta(j,r)\bbeta(j-r,s)\cdot \sum_{0\le \ell\le j-r-s} \bbeta(j-r-s,\ell)\sym_p(\ell).
\end{align*}
By Lemma 6.7, the sum on $\ell$ is $p^{(j-r-s)(j-r-s+1)/2}.$

We have $\E_{\sigma,\psi}|R(\overline p^i)=\overline\psi_2(p^i) \E_{\sigma,\psi}.$
Thus $\E_{\sigma,\psi}|T'_j(p^2)=\lambda'_{j;\sigma,\psi}(p^2) \E_{\sigma,\psi}$ where
\begin{align*} 
&\lambda'_{j;\sigma,\psi}(p^2)\\
&\quad= \sum_{i,r,s}(-1)^i p^{i(i-1)/2}\bbeta(n-j+i,i) \overline\psi_2(p^i) \chi(p^{j-i+s-r})\psi_2(p^{s-r})\\
&\qquad \cdot p^{(j-i-r)(k-n-1)+ks+(j-i-r-s)(j-i-r-s+1)/2}\\
&\qquad \bbeta(n,j-i)\bbeta(j-i,r)\bbeta(j-i-r,s)
\end{align*}
where $0\le i\le j$, $0\le r\le j-i$, $0\le s\le j-i-r$.
Making the change of variable $r\mapsto j-i-r$ we get
\begin{align*} \lambda'_{j;\sigma,\psi}(p^2)
&= \sum_{i,r,s}(-1)^i p^{i(i-1)/2}\chi(p^{r+s})\psi_2(p^{s+r-j}) p^{r(k-n-1)+ks+(r-s)(r-s+1)/2}\\
&\quad \cdot \bbeta(n-j+i,i)\bbeta(n,j-i)\bbeta(j-i,j-i-r)\bbeta(r,s),
\end{align*}
where $0\le i\le j$, $0\le r\le j-i$, $0\le s\le r$, or equivalently, $0\le r\le j$, $0\le i\le j-r$, $0\le s\le r$.
Note that
\begin{align*}
&\bbeta(n-j+i,i)\bbeta(n,j-i)\bbeta(j-i,r)\frac{\bmu(j,i)}{\bmu(j,i)}\\
&\quad = \bbeta(n,j)\bbeta(j,r)\bbeta(j-r,i).
\end{align*}
Also, using the relation $\bbeta(m,r)=p^r\bbeta(m-1,r)+\bbeta(m-1,r-1)$, we get
$$\sum_{i=0}^{j-r}(-1)^i p^{i(i-1)/2}\bbeta(j-r,i) = \begin{cases} 1&\text{if $j=r$,}\\ 0 &\text{otherwise.}\end{cases}$$
Hence $\lambda'_{j;\sigma,\psi}(p^2)=\chi(p^j)p^{j(k-n-1)+j(j+1)/2}\bbeta(n,j) S(j,k-j)$ where
$$S(j,y)=\sum_{s=0}^j p^{ys+s(s-1)/2} \chi\psi_1(p^s)\bbeta(j,s).$$
Using the identity $\bbeta(m,s)=p^s\bbeta(m-1,s)+\bbeta(m-1,s-1)$, we have
\begin{align*}
S(j,k-j)&=(\chi\psi_2(p)p^{k-j}+1)S(j-1,k-j+1)\\
&=\prod_{i=1}^j(\chi\psi_2(p)p^{k-i}+1),
\end{align*}
proving the corollary.
\end{proof}

\bigskip
\section{Lemmas}
\smallskip

\begin{lem}\label{Lemma 6.1}  Suppose $\stufe',\stufe''\in\Z_+$ with $(\stufe',\stufe'')=1$.
\item{(a)}  Given any $E'\in SL_n(\Z)$, there is some $E\in SL_n(\Z)$ so that $E\equiv E'\ (\stufe')$ and
$E\equiv I\ (\stufe'')$.  
\item{(b)}  Suppose $\gamma\in\Gamma_0(\stufe')$.  Then there is some $\gamma'\in\Gamma_0(\stufe')$
so that $\gamma'\equiv\gamma\ (\stufe')$ and $\gamma'\equiv I\ (\stufe'').$
\end{lem}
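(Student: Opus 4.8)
The plan is to reduce both parts to the surjectivity of reduction-mod-$m$ homomorphisms combined with the Chinese Remainder Theorem, since the hypothesis $(\stufe',\stufe'')=1$ is exactly what makes the two congruence conditions independent. Throughout I would set $m=\stufe'\stufe''$ and use the ring isomorphism $\Z/m\Z\cong\Z/\stufe'\Z\times\Z/\stufe''\Z$.

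For part (a), the key input I would invoke is that the reduction map $SL_n(\Z)\to SL_n(\Z/m\Z)$ is surjective for every $m\in\Z_+$; this is the fact already used in the proof of Proposition 5.1. First I would pass the Chinese Remainder decomposition through to obtain the induced group isomorphism
$$SL_n(\Z/m\Z)\xrightarrow{\ \sim\ }SL_n(\Z/\stufe'\Z)\times SL_n(\Z/\stufe''\Z).$$
Under this isomorphism the pair $(\,\overline{E'},I\,)$ corresponds to a well-defined element of $SL_n(\Z/m\Z)$, and surjectivity of reduction produces some $E\in SL_n(\Z)$ whose image mod $m$ is that element. Reading off the two factors then gives $E\equiv E'\ (\stufe')$ and $E\equiv I\ (\stufe'')$, as required.

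For part (b), I would run the identical argument with $SL_n$ replaced by $Sp_n$: the reduction $Sp_n(\Z)\to Sp_n(\Z/m\Z)$ is surjective, and by the same Chinese Remainder decomposition I may choose $\gamma'\in Sp_n(\Z)$ reducing to $(\,\overline\gamma,I\,)$ in $Sp_n(\Z/\stufe'\Z)\times Sp_n(\Z/\stufe''\Z)$. Then $\gamma'\equiv I\ (\stufe'')$ holds directly, while $\gamma'\equiv\gamma\ (\stufe')$. It only remains to check $\gamma'\in\Gamma_0(\stufe')$: writing $\gamma=\begin{pmatrix} A&B\\C&D\end{pmatrix}$ with $C\equiv 0\ (\stufe')$ (since $\gamma\in\Gamma_0(\stufe')$) and $\gamma'=\begin{pmatrix} A'&B'\\C'&D'\end{pmatrix}$, the congruence $\gamma'\equiv\gamma\ (\stufe')$ forces $C'\equiv C\equiv 0\ (\stufe')$, so indeed $\gamma'\in\Gamma_0(\stufe')$.

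The only nonroutine ingredient, and hence the step I expect to be the main obstacle, is the surjectivity of the symplectic reduction map $Sp_n(\Z)\to Sp_n(\Z/m\Z)$. For $SL_n$ this is elementary (one lifts via elementary transvections, or invokes strong approximation), whereas for $Sp_n$ it rests on knowing that $Sp_n(\Z/m\Z)$ is generated by the reductions of the standard symplectic transvections, each of which manifestly lifts to $Sp_n(\Z)$; equivalently one cites strong approximation for the simply connected group $Sp_n$. Once this is granted, both parts follow purely formally from the Chinese Remainder Theorem.
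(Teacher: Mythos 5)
Your proof is correct, but it takes a genuinely different route from the paper's, and the divergence is sharpest in part (b). For part (a) the mathematical content is essentially the same: where you cite the classical surjectivity of $SL_n(\Z)\to SL_n(\Z/m\Z)$ together with the Chinese Remainder Theorem, the paper proves exactly what is needed by hand, exhibiting explicit matrices in $SL_2(\Z)$ (embedded into $SL_n(\Z)$) that are $\equiv I\ (\stufe'')$ while realizing elementary row and column operations modulo $\stufe'$, and then reducing $E'$ to the identity mod $\stufe'$ by such operations; so your argument is the same idea with the key fact outsourced rather than reproved. The real difference is in (b): you invoke surjectivity of $Sp_n(\Z)\to Sp_n(\Z/m\Z)$ (strong approximation for $Sp_n$, or generation of $Sp_n(\Z/m\Z)$ by lifted transvections), and you correctly identify this as the one nonroutine ingredient --- but the paper never uses it, and its proof is designed precisely to avoid it. Instead the paper exploits the shape of $\Gamma_0(\stufe')$ modulo $\stufe'$: since $C\equiv 0\ (\stufe')$, the symplectic relations force $A\,^tD\equiv I\ (\stufe')$ and $A\,^tB$ symmetric, so $\gamma$ is congruent mod $\stufe'$ to a block upper-triangular symplectic matrix determined by $A$ and $B$. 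Such a matrix is lifted by hand: part (a) gives $G\in SL_n(\Z)$ with $G\equiv\diag(\overline a,1,\dots,1)A\ (\stufe')$ and $G\equiv I\ (\stufe'')$ (where $a=\det A$ and $a\overline a\equiv 1\ (\stufe')$), one takes a symmetric correction $W$ built with a factor $\stufe''$ so that it vanishes mod $\stufe''$, and one embedded $SL_2(\Z)$ matrix accounts for the unit $a$; the product $\gamma'=(\text{embedded } SL_2)\cdot\begin{pmatrix}G&GW\\0&\,^tG^{-1}\end{pmatrix}$ then has all the required congruences. What each approach buys: yours is shorter and conceptually transparent, at the price of a symplectic input (strong approximation or a generation theorem for $Sp_n(\Z/m\Z)$) that is substantially heavier than anything else in the paper's toolkit; the paper's construction is longer but entirely elementary and self-contained, which matters because this lemma is the workhorse used throughout the paper to choose coset representatives and auxiliary matrices with prescribed congruences.
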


\begin{proof}  
(a)  Choose $y,z\in\Z$ so that $y\stufe''+z\stufe'=1$.  Thus $(y\stufe'',z\stufe')=1$, so 
there are $w,x\in\Z$ so that $wz(\stufe')^2-xy(\stufe'')^2=1$.  Hence
$G_0=\begin{pmatrix} w\stufe'&x\stufe''\\y\stufe''&z\stufe'\end{pmatrix}\in SL_2(\Z)$
with $G_0\equiv I\ (\stufe'')$, $G_0\equiv\begin{pmatrix} 0&-1\\1&0\end{pmatrix}\ (\stufe')$.  
For $b\in\Z$, $G_1=\begin{pmatrix} 1&by\stufe''\\0&1\end{pmatrix}$, 
$G_2=\begin{pmatrix} 1&0\\by\stufe''&1\end{pmatrix}$, 
we have $G_1,G_2\in SL_2(\Z)$ with $G_1,G_2\equiv I\ (\stufe'')$,
$G_1\equiv \begin{pmatrix} 1&b\\0&1\end{pmatrix}\ (\stufe')$, $G_2\equiv\begin{pmatrix} 1&0\\b&1\end{pmatrix}\ (\stufe')$.
Also, for $a\in\Z$ so that $q\nmid a$,  take $c\in\Z$ so that $c\equiv 1\ (\stufe'')$, $c\equiv a\ (\stufe')$.
Thus $(c,\stufe'\stufe'')=1$ so there are $u,v\in\Z$ so that $cu-(\stufe'\stufe'')^2v=1$.  Set $G_3=\begin{pmatrix} c&\stufe'\stufe''\\v\stufe'\stufe''&u\end{pmatrix}.$
So $G_3\in SL_2(\Z)$, $G_3\equiv I\ (\stufe'')$, $G_3\equiv\begin{pmatrix} c&0\\0&\overline c\end{pmatrix}\ (\stufe')$
(where $c\overline c\equiv 1\ (\stufe')$).   For any $d$, $0\le d\le n-2$, the map
$G\mapsto \begin{pmatrix} I_d\\&G\\&&I_{n-d-2}\end{pmatrix}$ is an embedding of $SL_2(\Z)$ into $SL_n(\Z)$.
Thus we have matrices that allow us to perform "local" elementary row and column operations modulo $\stufe'$
within $SL_n(\Z)$.  Hence, given some $E'\in SL_n(\Z)$, there are $G,G'\in SL_n(\Z)$ so that
$G,G'\equiv I\ (\stufe'')$ and $GE'G'\equiv I\ (\stufe')$; hence with $E=(G'G)^{-1}$, we have
$E\in SL_n(\Z)$ with $E\equiv I\ (\stufe'')$, $E\equiv E'\ (\stufe').$

(b) Write $\gamma=\begin{pmatrix} A&B\\C&D\end{pmatrix}$; so $C\equiv0\ (\stufe')$.  Set $a=\det A$; since
$A\,^tD\equiv I\ (\stufe')$, we can choose $\overline a\in\Z$ so that $a\overline a\equiv 1\ (\stufe').$
So we can choose $G\in SL_n(\Z)$ so that $G\equiv\begin{pmatrix}\overline a\\&I\end{pmatrix} A\ (\stufe'),$
$G\equiv I\ (\stufe'').$  We know $A\,^tB$ is symmetric; set 
$W=\stufe''\overline\stufe'' G^{-1}\begin{pmatrix}\overline a\\&I\end{pmatrix} B$
where $\stufe''\overline\stufe''\equiv 1\ (\stufe').$  Now take
$\begin{pmatrix} w&x\\y&z\end{pmatrix}\in SL_2(\Z)$ so that
$\begin{pmatrix} w&x\\y&z\end{pmatrix}\equiv\begin{pmatrix} a\\&\overline a\end{pmatrix}\ (\stufe')$ and
$\begin{pmatrix} w&x\\y&z\end{pmatrix}\equiv I\ (\stufe'').$  Set
$$\gamma'=\begin{pmatrix} w&&x\\&I_{n-1}&&0\\y&&z\\&0&&I_{n-1}\end{pmatrix}
\begin{pmatrix} G&GW\\0&^tG^{-1}\end{pmatrix};$$
so $\gamma'\in\Gamma_0(\stufe')$, $\gamma\equiv\gamma\ (\stufe'),$ and $\gamma'\equiv I\ (\stufe'').$
\end{proof}

\begin{lem}\label{Lemma 6.2} Let $\Lambda=\Z x_1\oplus\cdots\oplus\Z x_n$;
fix a prime $q$ and let $\K_d=\K_d(q)$.  The elements of
$SL_n(\Z)/\K_d$ are in one-to-one correspondence with lattices $\Omega$ where
$q\Lambda\subseteq\Omega\subseteq\Lambda$ and $[\Lambda:\Omega]=q^d$.
The correspondence is given as follows: For $G\K_d\in SL_n(\Z)/\K_d$, $\Omega$
is the lattice with basis
$$(x_1\ \ldots\ x_n)G\begin{pmatrix} qI_d\\&I\end{pmatrix}.$$
Further, the number of such $\Omega$ is $\bbeta(n,r).$
\end{lem}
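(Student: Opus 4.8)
The plan is to show that the assignment $G \mapsto \Omega = \Lambda G X_d$, with $X_d = \begin{pmatrix} qI_d\\&I\end{pmatrix}$, descends to a well-defined injective map on $SL_n(\Z)/\K_d$, lands in the set of lattices $\Omega$ with $q\Lambda\subseteq\Omega\subseteq\Lambda$ and $[\Lambda:\Omega]=q^d$, is surjective onto that set, and finally to count the target set. Recall $\K_d = X_d SL_n(\Z) X_d^{-1}\cap SL_n(\Z)$, so $K\in\K_d$ exactly when $K\in SL_n(\Z)$ and $H:=X_d^{-1}KX_d\in SL_n(\Z)$. For well-definedness, if $G'=GK$ with $K\in\K_d$ then $G'X_d = GX_dH$ with $H\in SL_n(\Z)\subseteq GL_n(\Z)$; since right multiplication by an element of $GL_n(\Z)$ only changes the basis, $\Lambda G'X_d=\Lambda GX_d$. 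For the containments, $GX_d$ is integral so $\Omega\subseteq\Lambda$; its index is $|\det(GX_d)|=q^d$; and since $qX_d^{-1}=\diag(I_d,qI_{n-d})$ is integral, $q(GX_d)^{-1}=qX_d^{-1}G^{-1}$ is integral, giving $q\Lambda\subseteq\Omega$.

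For injectivity, suppose $\Lambda GX_d=\Lambda G'X_d$. Then $G'X_d=GX_dH$ for some $H\in GL_n(\Z)$, whence $H=X_d^{-1}(G^{-1}G')X_d$. Since $\det H=\det(G^{-1}G')=1$, in fact $H\in SL_n(\Z)$, and then $G^{-1}G'=X_dHX_d^{-1}\in X_dSL_n(\Z)X_d^{-1}\cap SL_n(\Z)=\K_d$, so $G\K_d=G'\K_d$.

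For surjectivity, take $\Omega$ with $q\Lambda\subseteq\Omega\subseteq\Lambda$ and $[\Lambda:\Omega]=q^d$. The quotient $\Lambda/\Omega$ is annihilated by $q$ and has order $q^d$, so $\Lambda/\Omega\cong(\Z/q\Z)^d$. By the elementary divisor theorem there is a basis $y_1,\dots,y_n$ of $\Lambda$ with $\Omega=\Z qy_1\oplus\cdots\oplus\Z qy_d\oplus\Z y_{d+1}\oplus\cdots\oplus\Z y_n$; writing $(y_1\ \dots\ y_n)=(x_1\ \dots\ x_n)G_0$ with $G_0\in GL_n(\Z)$ gives $\Omega=\Lambda G_0X_d$. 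If $\det G_0=-1$, replacing $y_1$ by $-y_1$ leaves $\Omega$ unchanged while turning $G_0$ into an element of $SL_n(\Z)$; thus $\Omega$ lies in the image.

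Finally, the rule $\Omega\mapsto\Omega/q\Lambda$ identifies the lattices in question with the $\F$-subspaces of $\Lambda/q\Lambda\cong\F^n$ of dimension $n-d$, since $q\Lambda\subseteq\Omega\subseteq\Lambda$ with $[\Lambda:\Omega]=q^d$ forces $\dim_{\F}(\Omega/q\Lambda)=n-d$; the number of these is $\bbeta_q(n,n-d)=\bbeta_q(n,d)$, using $\bbeta(m,r)=\bbeta(m,m-r)$ (so the stated count is $\bbeta_q(n,d)$, the ``$r$'' being a typo for ``$d$''). The main obstacle is the surjectivity step: one must invoke the elementary divisor theorem to realize an arbitrary such $\Omega$ in the normal form $\Lambda G_0X_d$, and then correct $\det G_0$ to pass from $GL_n(\Z)$ down to $SL_n(\Z)$ without disturbing $\Omega$.
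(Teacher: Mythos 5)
Your proof is correct and follows essentially the same route as the paper: the same map $G\mapsto\Lambda GX_d$, the same identification of cosets with bases of $\Omega$, and the same count via $(n-d)$-dimensional subspaces of $\Lambda/q\Lambda$, giving $\bbeta_q(n,d)$ (you are right that the ``$r$'' in the statement is a typo for ``$d$''). The only difference is one of detail: where the paper asserts surjectivity and the coset criterion as clear, you supply the elementary divisor argument and the determinant-sign correction, which is a faithful filling-in rather than a different method.
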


\begin{proof} 
Given $G\in SL_n(\Z)$, we map $G$ to the sublattice of $\Lambda$ with basis
$$(x_1\ \ldots\ x_n)G\begin{pmatrix} qI_d\\&I\end{pmatrix}.$$
Clearly each $\Omega$ described in the lemma can be obtained this way.
Further, for $H\in SL_n(\Z)$,
$$(x_1\ \ldots\ x_n)GH\begin{pmatrix} qI_d\\&I\end{pmatrix}$$
is also a basis for $\Omega$ if and only if $H\in\K_{d}.$
Also, each such $\Omega$ corresponds to a dimension $n-r$ subspace of $\Lambda/q\Lambda$,
of which there are $\bbeta(n,n-r)=\bbeta(n,r).$
\end{proof}

For $d,r\ge0$, $d+r\le n$, let
$\K_{d,r}(q)$ be the subspace of $SL_n(\Z)$ consisting of matrices
$$\begin{pmatrix} G_1&B_1&B_2\\C_1&G_2&B_3\\C_2&C_3&G_3\end{pmatrix}$$
where $G_1$ is $d\times d$, $G_3$ is $r\times r$, $B_1,B_3\equiv 0\ (q)$,
$B_2\equiv0\ (q^2).$

\begin{lem}\label{Lemma 6.3}  
Let $\Lambda=\Z x_1\oplus\cdots\oplus\Z x_n$; fix a prime $q$ and let
$\K_{d,r}=\K_{d,r}(q)$.
For $\Omega$ a sublattice of $\Lambda$ containing $q^2\Lambda$,
let $m_i$ denote the multiplicity of $q^i$ among the invariant factors $\{\Lambda:\Omega\}$.
Then the elements of $SL_n(\Z)/\K_{d,r}$ are in one-to-one
correspondence with sublattices $\Omega$ of $\Lambda$ containing $q^2\Lambda$
with $m_0=r$ and $m_2=d$.  
The correspondence is given as follows: For $G\in SL_n(\Z)/\K_{d,r}$, $\Omega$
is the lattice with basis
$$(x_1\ \ldots\ x_n)G\begin{pmatrix} q^2I_d\\&qI\\&&I_r\end{pmatrix}.$$
Further, there are $q^{dr}\bbeta(n,d)\bbeta(n-d,r)$ such $\Omega$.
\end{lem}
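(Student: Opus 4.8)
The plan is to mirror the proof of Lemma 6.2, but now tracking a two-step filtration of $\Lambda$ by $q$ and $q^2$. Write $D=\diag(q^2I_d,\,qI_{n-d-r},\,I_r)$, so that the lattice attached to $G$ is $\Omega=\Omega(G)$, the $\Z$-span of the columns of $(x_1\ \ldots\ x_n)GD$. First I would verify that $G\K_{d,r}\mapsto\Omega(G)$ is a well-defined bijection onto the lattices described. For well-definedness, take $H\in\K_{d,r}$ and compute $D^{-1}HD$ block by block; the hypotheses $B_1,B_3\equiv0\ (q)$ and $B_2\equiv0\ (q^2)$ are exactly what is needed to clear the negative powers of $q$, so $D^{-1}HD\in SL_n(\Z)$ and hence $\Omega(GH)=\Omega(G)$. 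Running this computation in reverse shows that $\Omega(G)=\Omega(G')$ forces $H=G^{-1}G'\in SL_n(\Z)$ to satisfy precisely the congruences defining $\K_{d,r}$, giving injectivity. Since left multiplication by the unimodular $G$ does not change the Smith normal form of $GD$, the invariant factors $\{\Lambda:\Omega(G)\}$ are those of $D$, namely $q^2$ with multiplicity $d$, $q$ with multiplicity $n-d-r$, and $1$ with multiplicity $r$; this is the condition $m_0=r$, $m_2=d$. Conversely the elementary divisor theorem produces, for any $\Omega$ of this cotype, a basis exhibiting $\Omega$ as the column span of $(x_1\ \ldots\ x_n)PD$ with $P\in GL_n(\Z)$, and a sign change in one column makes $P\in SL_n(\Z)$; thus the map is onto.

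It then remains to count these $\Omega$, which is the substantive step. With $\F=\Z/q\Z$ I would attach to $\Omega$ the two $\F$-subspaces
$$W_1=(\Omega+q\Lambda)/q\Lambda\subseteq\Lambda/q\Lambda,\qquad W_2=(\Omega\cap q\Lambda)/q^2\Lambda\subseteq q\Lambda/q^2\Lambda.$$
A short index computation from $|\Lambda/\Omega|=q^{\,n+d-r}$ together with $q(\Lambda/\Omega)\cong(\Omega+q\Lambda)/\Omega\cong q\Lambda/(\Omega\cap q\Lambda)$ gives $\dim W_1=r$ and $\dim W_2=n-d$; and since $q\Omega\subseteq\Omega\cap q\Lambda$, the natural isomorphism $\iota\colon\Lambda/q\Lambda\to q\Lambda/q^2\Lambda$, $x+q\Lambda\mapsto qx+q^2\Lambda$, satisfies $\iota(W_1)\subseteq W_2$.

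I would then count in three stages. First $W_2$, an $(n-d)$-dimensional (equivalently codimension-$d$) subspace of $\F^n$, in $\bbeta_q(n,n-d)=\bbeta_q(n,d)$ ways; this recovers $A:=\Omega\cap q\Lambda$ as the preimage of $W_2$ in $q\Lambda$. Next $W_1$ as an $r$-dimensional subspace of $\iota^{-1}(W_2)\cong\F^{\,n-d}$, in $\bbeta_q(n-d,r)$ ways. Finally, with $A$ and $W_1$ fixed, the lattices $\Omega$ with $\Omega\cap q\Lambda=A$ and $(\Omega+q\Lambda)/q\Lambda=W_1$ are the liftings of $W_1$ into $\Lambda/A$ meeting $q\Lambda/A$ trivially, i.e. a torsor under $\mathrm{Hom}(W_1,q\Lambda/A)$, of which there are $|q\Lambda/A|^{\dim W_1}=(q^d)^r=q^{dr}$. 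That each such $\Omega$ indeed satisfies $q^2\Lambda\subseteq\Omega\subseteq\Lambda$ with $m_0=r$, $m_2=d$ follows from $[\Lambda:A]=q^{\,n+d}$, $[\Omega:A]=q^r$, and $q(\Lambda/\Omega)\cong q\Lambda/A$, so the stages assemble into a genuine bijection and the total is $q^{dr}\bbeta_q(n,d)\bbeta_q(n-d,r)$.

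The routine part is the block computation of $D^{-1}HD$ and the Smith-normal-form bijection, which parallels Lemma 6.2. The main obstacle is organizing the count so the three factors appear cleanly: the decisive observations are that $\iota(W_1)\subseteq W_2$ is forced by $q\Omega\subseteq\Omega\cap q\Lambda$, that $A=\Omega\cap q\Lambda$ is recovered from $W_2$ alone, and that the remaining freedom in $\Omega$ is a $\mathrm{Hom}(W_1,q\Lambda/A)$-torsor. Verifying that this last identification introduces no overcounting — i.e. that distinct liftings give distinct $\Omega$ — is where I expect to spend the most care.
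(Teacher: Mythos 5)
Your proof is correct and follows essentially the same route as the paper: the same verification that $G\K_{d,r}\mapsto\Omega(G)$ is a well-defined bijection onto lattices of the stated cotype, followed by a two-stage count in which one first fixes $\Delta=\frac{1}{q}\Omega\cap\Lambda$ (equivalently your $A=\Omega\cap q\Lambda=q\Delta$, i.e. $W_2$) in $\bbeta_q(n,d)$ ways and then chooses $\Omega$ inside it. The only cosmetic difference is that you factor the paper's second count $q^{dr}\bbeta_q(n-d,r)$ (the $r$-dimensional subspaces of $\Delta/q\Delta$ independent of $\overline{q\Lambda}$) into the choice of the image $W_1$ times the $q^{dr}$ liftings, which is the same count unpacked.
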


\begin{proof}  
Given $G\in SL_n(\Z)$, we map $G$ to the sublattice of $\Lambda$ with basis
$$(x_1\ \ldots\ x_n)G\begin{pmatrix} q^2I_d\\&qI\\&&I_{r}\end{pmatrix}.$$
Clearly each $\Omega$ described in the lemma can be obtained this way.
Further, for $H\in SL_n(\Z)$,
$$(x_1\ \ldots\ x_n)GH\begin{pmatrix} q^2I_d\\&qI\\&&I_{r}\end{pmatrix}$$
is also a basis for $\Omega$ if and only if $H\in\K_{d,r}.$

On the other hand, given such $\Omega$, we have $\Omega=q^2\Lambda_0\oplus q\Lambda_1\oplus \Lambda_2$
where $\Lambda=\Lambda_0\oplus\Lambda_1\oplus\Lambda_2$ with $\rank\Lambda_0=d$, $\rank\Lambda_2=r$.
We can construct all such $\Omega$ as follows.  First let $\Delta$ be the preimage in $\Lambda$ of
a dimension $n-d$ subspace of $\Lambda/q\Lambda$; there are $\bbeta(n,n-d)=\bbeta(n,d)$ such subspaces.
Then let $\Omega$ be the preimage in $\Delta$ of a dimension $r$ subspace of $\Delta/q\Delta$ that is
independent of $\overline{q\Lambda}$; there are $q^{dr}\bbeta(n-d,r)$ choices.
Since $\Delta=\frac{1}{q}\Omega\cap\Lambda$, a different choice in step 1 or step 2 of
this construction yields a different lattice $\Omega$.
\end{proof}

\noindent{\bf Remark.}
 Let $\Lambda=\Z x_1\oplus\cdots\oplus\Z x_n$, $\Lambda^{\#}=\Z y_1\oplus\cdots\oplus\Z y_n$
where $(y_1\ \ldots\ y_n)$ is the basis dual to $(x_1\ \ldots\ x_n)$.  Then for $G\in SL_n(\Z)$,
the basis dual to $(x_1\ \ldots\ x_n)G$ is $(y_1\ \ldots\ y_n)\,^tG^{-1};$ thus the elements of
$SL_n(\Z)/\K_d(q)$ are in one-to-one correspondence with subspaces $\F y_1'\oplus\cdots\oplus\F y_d'
\subseteq \Lambda^{\#}/q\Lambda^{\#}$ .  Similarly, the elements of $SL_n(\Z)/\K_{d,r}(q)$ are in
one-to-one correspondence with sublattices $\Omega'$ where 
$q^2\Lambda^{\#}\subseteq \Omega'\subseteq\Lambda^{\#}$ and
$$(y_1\ \ldots\ y_n)\,^tG^{-1}\begin{pmatrix} I_d\\&qI\\&&q^2I_r\end{pmatrix}$$
is a basis for $\Omega'$.

\begin{lem}\label{Lemma 6.4}  Fix a prime $q$;
suppose $M'\in\Z^{n,n}$ with
$d'=\rank_qM'$.  Let $\K_=\K_d(q)$, $\K_{m,r}=\K_{m,r}(q)$, $\bbeta(m,r)=\bbeta_q(m,r)$.
\item{(a)}  For $0\le d\le d'$, there are $q^{d(n-d')}\bbeta(d',d)$ choices for $E\in \K_d\backslash SL_n(\Z)$
so that the top $d$ rows of $EM'$ are linearly independent modulo $q$.
\item{(b)} For $r,m,s\ge 0$ so that $d'-r\le m+s\le d'$, there are
$$\bbeta(n-d',n-r-m-s)\bbeta(d',m+s)\bbeta(m+s,m)q^{m(n+r-d')+s(r+m+s-d')}$$
choices for $E\in \K_{m,r}\backslash SL_n(\Z)$ so that the $q$-rank of the top $m$
rows of $EM'$ is $m$ and the $q$-rank of the top $n-r$ rows of $EM'$ is $m+s$.
\end{lem}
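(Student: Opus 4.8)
The plan is to reduce every condition modulo $q$ and to translate the counting of cosets into counting subspaces and flags of $\F^n$ (where $\F=\Z/q\Z$), to which the Gaussian–binomial identities apply. Throughout, $L$ denotes the left kernel $\{v:\ v\overline{M'}=0\}$ of $\overline{M'}$, so $\dim L=n-d'$.

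First I would pin down the reductions of $\K_d$ and $\K_{m,r}$ modulo $q$. Writing $G\in\K_d=\K_{d,0}(q)$ in $(d,n-d)$-block form, the defining condition $X_d^{-1}GX_d\in SL_n(\Z)$ forces the upper-right block to be $\equiv0\ (q)$, so $\overline{\K_d}$ is the \emph{lower} block-parabolic. Consequently, for $K\in\K_d$ the top $d$ rows of $\overline{KE}$ equal $A\cdot(\text{top }d\text{ rows of }\overline E)$ with $A\in GL_d(\F)$, so the $\F$-row-span $U$ of the top $d$ rows of $\overline E$ is an invariant of the left coset $\K_dE$. Since $\Gamma(q)\subseteq\K_d$, the map $\K_dE\mapsto U$ is a bijection from $\K_d\backslash SL_n(\Z)$ onto the $d$-dimensional subspaces of $\F^n$. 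The top $d$ rows of $\overline E\,\overline{M'}$ are linearly independent exactly when $v\mapsto v\overline{M'}$ is injective on $U$, i.e. $U\cap L=0$. Part (a) thus reduces to the standard count of $d$-subspaces transverse to the fixed $(n-d')$-subspace $L$: projecting modulo $L$ gives $\bbeta(d',d)$ choices of image and $q^{d(n-d')}$ lifts, for a total of $q^{d(n-d')}\bbeta(d',d)$.

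For (b) I would run the same translation using the three-step flag of $\overline{\K_{m,r}}$, which is the lower block-parabolic for the type $(m,n-m-r,r)$. Left multiplication then preserves both the span $U_1$ (dimension $m$) of the top $m$ rows and the span $U_2$ (dimension $n-r$) of the top $n-r$ rows of $\overline E$, and the two rank conditions become $U_1\cap L=0$ and $\dim(U_2\cap L)=n-r-m-s$ (equivalently $\dim U_2\overline{M'}=m+s$). I would count such flags in two steps: by the subspace-intersection formula there are $\bbeta(n-d',n-r-m-s)\,\bbeta(d',m+s)\,q^{(r+m+s-d')(m+s)}$ choices of $U_2$ with $\dim(U_2\cap L)=n-r-m-s$, and then, inside $U_2$, the transverse count from (a) gives $q^{m(n-r-m-s)}\bbeta(m+s,m)$ choices of $U_1\subset U_2$ meeting $U_2\cap L$ trivially. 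The range $d'-r\le m+s\le d'$ is precisely the feasibility range, and outside it the relevant $\bbeta$'s vanish by the convention $\bbeta(m,r)=0$ for $r<0$.

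The one genuine subtlety — and the step I expect to be the main obstacle — is that, unlike for $\K_d$, we have $\Gamma(q)\not\subseteq\K_{m,r}$ (the $(1,3)$-block condition is modulo $q^2$), so $\K_{m,r}\backslash SL_n(\Z)$ is strictly finer than the mod-$q$ flag variety. Since the two rank conditions depend only on the mod-$q$ flag $(U_1,U_2)$, they are constant on the fibers of $\K_{m,r}E\mapsto(U_1,U_2)$; as $SL_n(\Z)$ acts transitively on flags of this type, all fibers have equal size, namely $[SL_n(\Z):\K_{m,r}]/\#\{\text{flags}\}=q^{mr}\bbeta(n,m)\bbeta(n-m,r)/(\bbeta(n,m)\bbeta(n-m,r))=q^{mr}$, using the index computed in Lemma 6.3. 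Multiplying the flag count by $q^{mr}$ and collecting the three $\bbeta$-factors, it remains only to verify the exponent identity $mr+(r+m+s-d')(m+s)+m(n-r-m-s)=m(n+r-d')+s(r+m+s-d')$, a routine expansion, which yields the asserted formula.
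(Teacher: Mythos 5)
Your proof is correct, and at its core it performs the same count as the paper: both arguments reduce everything modulo $q$ and count subspaces of $\F^n$ with prescribed intersection against the $(n-d')$-dimensional kernel of $\overline{M'}$, via the same two-step choice (first the large subspace meeting the kernel in the forced dimension, then a transverse small subspace inside it), producing exactly the factors $\bbeta(n-d',n-r-m-s)\bbeta(d',m+s)$, $\bbeta(m+s,m)$ and the stated powers of $q$. In part (a) the difference is purely cosmetic: the paper routes the translation through the lattice/dual-basis correspondence of Lemma 6.2 and its Remark, while you use row spans of $\overline E$ directly; the transversality count is identical. In part (b) you do diverge in a substantive way, precisely at the subtlety you identify. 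The paper never passes to the flag variety: it uses the bijection (the dualized form of Lemma 6.3) between $\K_{m,r}\backslash SL_n(\Z)$ and lattices $\Omega$ with $q^2\Lambda\subseteq\Omega\subseteq\Lambda$ of the appropriate invariant factors, and enumerates those lattices directly --- first $\Delta=\frac{1}{q}\Omega\cap\Lambda$, then $\overline\Omega\subseteq\Delta/q\Delta$ independent of $\overline{q\Lambda}$; the quantity you obtain as $q^{m(n-r-m-s)}\bbeta(m+s,m)\cdot q^{mr}$ appears there as the single count $q^{m(n-m-s)}\bbeta(m+s,m)$ of choices of $\overline\Omega$, so the factor $q^{mr}$ that you isolate is absorbed into the lattice enumeration and no separate argument is needed for it. Your alternative --- observe that the two rank conditions factor through the coarser mod-$q$ flag invariant $(U_1,U_2)$, count admissible flags, and multiply by the uniform fiber size $q^{mr}=[SL_n(\Z):\K_{m,r}]/\#\{\text{flags}\}$ obtained from equivariance, transitivity, and the index in Lemma 6.3 --- is sound (the constant-fiber claim is justified exactly as you say), and it has the virtue of making explicit where the failure of $\Gamma(q)\subseteq\K_{m,r}$ enters; the cost is the extra homogeneity argument, which the paper's bijective parametrization renders unnecessary. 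Your closing exponent identity also checks out, so the two organizations yield the same formula.
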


\begin{proof} 
(a) Take $E_0\in SL_n(\Z)$ so that $q$ divides the lower $n-d'$ rows of $E_0M'$; as $E$ varies over a set of representatives
for $\K_d\backslash SL_n(\Z)$, so does $EE_0$.  Thus we may as well assume that $q$ divides the lower $n-d'$ 
rows of $M'$.
We know by Lemma 6.2 and the remark
preceding this lemma that each $E\in\K_d\backslash SL_n(\Z)$ corresponds to a sublattice
$\Omega=\F y_1\oplus\cdots\oplus\F y_n$ of $\Lambda=\F x_1\oplus\cdots\oplus\F x_n$ with $[\Lambda:\Omega]=q^{n-d}$,
where $$\begin{pmatrix} y_1\\ \vdots\\ y_n\end{pmatrix} = \begin{pmatrix} I_d\\ &qI_{n-d}\end{pmatrix} E\begin{pmatrix} x_1\\ \vdots\\ x_n\end{pmatrix}.$$
Thus $\rank_q\begin{pmatrix} I_d\\ &qI_{n-d}\end{pmatrix} EM'=d$ if and only if $E$ is chosen so that $\F y_1\oplus\cdots\oplus\F y_d$
is independent of $\F x_{d'+1}\oplus\cdots\oplus\F x_n$; there are $\bbeta(d',d)q^{d(n-d')}$ such subspaces.

(b)  Let $\Lambda=\Z x_1\oplus\cdots\oplus\Z x_n$, and let $\Lambda(\,^tM') \mod q$
denote the subspace of $\F^{n,1}$ obtained by replacing each $x_i$ by column $i$ of $^tM'$ modulo $q$.
We know that each element $E\in \K_{m,r}\backslash SL_n(\Z)$ corresponds to a lattice
$\Omega=\Lambda_0\oplus q\Lambda_1\oplus q^2\Lambda_2$ with basis
$$(x_1\ \ldots\ x_n)\,^tE\begin{pmatrix} I_{m}\\&qI\\&&q^2I_r\end{pmatrix}.$$
We want to choose $\Omega$ so that, with $\Delta=\frac{1}{q}\Omega\cap\Lambda=\Lambda_0\oplus\Lambda_1
\oplus q\Lambda_2$, the map $\Lambda\mapsto\Lambda(\,^tM')\mod q$ takes $\Omega$ to a dimension
$m$ subspace and $\Delta$ to a dimension $m+s$ subspace.  Let $m_i=\rank\Lambda_i$.

Given $\Omega$ a sublattice of $\Lambda$ containing $q^2\Lambda$ and with $m_0=m$, $m_2=r$,
$\Omega$ determines a unique dimension $n-r$
sublattice $\overline\Delta=\overline{\frac{1}{q}\Omega\cap\Lambda}$
of $\Lambda/q\Lambda$, and then with $\Delta$ the preimage of $\overline\Delta$ in $\Lambda$, and $\Omega$ determines
a unique dimension $m$ sublattice $\overline\Omega$ of $\Delta/q\Delta.$
Thus we can build all $\Omega$ corresponding to $\K_{m,r}\backslash SL_n(\Z)$ by first choosing
a dimension $n-r$ subspace $\overline\Delta$ of $\Lambda/q\Lambda$; then the preimage of 
$\overline\Delta$ is $\Delta=\Delta_1\oplus q\Lambda_2$ where
$\Lambda_2$ has rank $r$ and $\Lambda=\Delta_1\oplus\Lambda_2$.  Then in $\Delta/q\Delta$, we choose
a dimension $m$ subspace $\overline\Omega$ that is independent of $\overline{q\Lambda}
=\overline{q\Lambda_2}$; then the preimage of $\overline\Omega$ in $\Delta$ is $\Omega$.
So
each $\Omega$ corresponds to a (unique) dimension $n-r$ subspace $\overline\Delta$ of $\Lambda/q\Lambda$,
and a (unique) dimension $m$ subspace of $\Delta/q\Delta$.

We know $d'=\dim\Lambda(\,^tM') \mod q$, so $\Lambda=W\oplus R$ where
$$R=\ker\big(\Lambda\mapsto\Lambda(\,^tM') \mod q\big).$$  So in $\Lambda/q\Lambda$,
$\dim\overline R=n-d'$.  We choose $\overline\Delta$ of dimension $n-r$ in $\Lambda/q\Lambda$
so that $\dim\overline\Delta\cap\overline R=n-r-m-s.$  Thus there are
$$\bbeta(n-d',n-r-m-s)\bbeta(d',m+s)q^{(m+s)(m+s+r-d')}$$
choices for $\Delta$ so that $\Lambda\mapsto\Lambda(\,^tM') \mod q$ takes $\Delta$ to a
dimension $m+s$ subspace.  Then $\Delta/q\Delta=\overline U\oplus\overline R'$ where
$\dim \overline U=m+s$ and $R'\subseteq R+q\Lambda$.  We choose $\overline\Omega$
of dimension $m$ and independent of $\overline R'$; so we have
$\bbeta(m+s,m)q^{m(n-m-s)}$
choices for $\Omega\subseteq\Delta$ so that $\Lambda\mapsto\Lambda(\,^tM') \mod q$
takes $\Omega$ to a dimension $m$ subspace.
\end{proof}

\begin{lem}\label{Lemma 6.5} Fix a prime $q$ and write $\bbeta(m,r)$ for $\bbeta_q(m,r).$
Suppose $(M'\ N')$ is an $n\times n$ coprime symmetric pair such that
$$M'\equiv\begin{pmatrix} A_1&0&0&0\\0&C&0&0\\0&0&0&0\\0&0&C'&0\end{pmatrix}\ (q)$$
with $A_1$ $d_1\times d_1$, $C$ $(d_4+d_5)\times(d_4+d_5)$, $C'$ $(d_7+d_8)\times(d_7+d_8)$,
and $A_1, C, C'$ invertible modulo $q$.  (So with $d'=\rank_q M'$, we have
$d'=d_1+d_4+d_5+d_7+d_8$.)  Suppose also that $d_1+d_5+d_7\le j\le n-d_4-d_8,$
and set $r=j-d_1-d_5+d_8$.  Then there are
$$\bbeta(d_4+d_5,d_4)\bbeta(d_7+d_8,d_8)q^{(d_4+d_8)(j-d_1-d_5)-d_7d_8}$$
choices for $G\in SL_n(\Z)/\K_j$ so that, writing
$$M'G=\begin{pmatrix} M_1&M_2\\M_3&M_4\\M_5&M_6\end{pmatrix},\ 
N'\,^tG^{-1}=\begin{pmatrix} N_1&N_2\\N_3&N_4\\N_5&N_6\end{pmatrix}$$
with $M_1,N_1$ $d_1\times j$, $M_5,N_5$ $r\times j$, we have
$\rank_qM_1=d_1$,  $M_2\equiv 0\ (q),$
$\rank_qM_4=d_4$, $\rank_q\begin{pmatrix} M_4\\M_6\end{pmatrix}=d_4+d_8$,
$\rank_q\begin{pmatrix} M_1\\M_3\end{pmatrix}=d_1+d_5$,
the lower $n-r-d_1-d_4-d_5$ rows of $N_3$ are 0 modulo $q$, and the upper
$r-d_7-d_8$ rows of $N_5$ have $q$-rank $r-d_7-d_8$.
\end{lem}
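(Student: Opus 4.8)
The plan is to reduce everything modulo $q$ and recast the count as a count of $j$-dimensional subspaces in prescribed relative position, in the spirit of the proofs of Lemmas 6.2--6.4. First I would pass to $\F=\Z/q\Z$: all seven conditions are congruences, so only the reductions of $M',N'\in\F^{n,n}$ and of $G\in SL_n(\F)$ matter. Since $\K_j=\K_{j,0}(q)$ reduces modulo $q$ to the parabolic subgroup of matrices whose lower-left $(n-j)\times j$ block vanishes, Lemma 6.2 and the Remark following Lemma 6.3 identify the coset $G\K_j$ with the $j$-dimensional subspace $V=V(G)\subseteq\F^n$ spanned by the first $j$ columns of $G$, the last $n-j$ columns of $G$ (and, dually, the first $j$ columns of $\,^tG^{-1}$) being determined only modulo $V$. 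The given shape shows that $\ker M'$ modulo $q$ is spanned by the last $n-d'$ coordinate directions and that the column space of $M'$ modulo $q$ is spanned by the coordinate directions of the rows carrying $A_1,C,C'$; I would record these, together with the analogous subspaces attached to $N'$, for use below.

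Next I would normalise $N'$ modulo $q$. Because only $M'$ is prescribed, I would left-multiply the pair $(M'\ N')$ by an $E\in SL_n(\Z)$ that preserves the block-diagonal shape of $M'$ (block-diagonal on the $d_1,\,d_4+d_5,\,n-d',\,d_7+d_8$ row partition, with invertible diagonal blocks), so that $M'$ keeps its form while the rows of $N'$ are recombined block-wise. Using coprimality $\rank_q(M'\ N')=n$ (which forces the rows of $N'$ indexed by the zero rows of $M'$ to be independent) and the symmetry of $M'\,^tN'$ (which pins down the off-diagonal pattern), I would bring $N'$ to a canonical form and read off $\ker N'$ and the positions of its forced zero and full-rank blocks. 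This symmetry is precisely what couples the data of $M'$ and $N'$.

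With the linear-algebra data fixed, I would translate each condition into an incidence condition on $V$ relative to a fixed flag assembled from $\ker M'$, the column space of $M'$, and $\ker N'$: the conditions $\rank_qM_1=d_1$ and $M_2\equiv0$ fix the projection of $V$ onto the $A_1$-directions; $\rank_qM_4=d_4$, $\rank_q\left(\begin{smallmatrix}M_1\\M_3\end{smallmatrix}\right)=d_1+d_5$ and $\rank_q\left(\begin{smallmatrix}M_4\\M_6\end{smallmatrix}\right)=d_4+d_8$ prescribe how $V$ meets the $C$- and $C'$-directions and $\ker M'$; and the vanishing of the lower rows of $N_3$ together with the full $q$-rank of the upper rows of $N_5$ prescribe, via $\,^tG^{-1}$, the position of $V$ relative to $\ker N'$, these last two being linked to the $M$-conditions through the symmetry of $M'\,^tN'$. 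Finally I would count the admissible $V$ as in the proof of Lemma 6.4(b): each choice of how $V$ splits a two-step subspace contributes a Gaussian binomial $\bbeta_q$, while the remaining affine (coset) freedom contributes a power of $q$. The $C$-block split yields $\bbeta_q(d_4+d_5,d_4)$, the $C'$-block split yields $\bbeta_q(d_7+d_8,d_8)$, and assembling the affine exponents gives $q^{(d_4+d_8)(j-d_1-d_5)-d_7d_8}$, which is the claimed total.

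The main obstacle is the simultaneous bookkeeping for $M'G$ and $N'\,^tG^{-1}$: because $G$ and $\,^tG^{-1}$ act oppositely on columns, the conditions on $N_3,N_5$ are dual to those on the $M_i$, and I expect the real work to lie in showing, via the symmetry of $M'\,^tN'$ and coprimality, that these dual conditions are compatible with and partly forced by the primal ones, so that together they single out exactly the subspaces $V$ with the stated intersection pattern. Once this dictionary between the primal and dual pictures is in place and the $q$-power exponents are tracked consistently across the two, the final tally is the routine product of $\bbeta_q$'s and the power of $q$ recorded above.
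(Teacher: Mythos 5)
Your outline follows the same broad strategy as the paper's proof (reduce mod $q$, identify cosets of $\K_j$ with subspaces, translate the conditions into incidence conditions against a flag built from $M'$ and $N'$, then count), but there are two concrete problems with your proposal, and together they leave the proof essentially undone.

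First, your coset dictionary is backwards. Since $\K_j=X_jSL_n(\Z)X_j^{-1}\cap SL_n(\Z)$ and $X_j^{-1}\begin{pmatrix} A&B\\C&D\end{pmatrix}X_j=\begin{pmatrix} A&B/q\\qC&D\end{pmatrix}$, the reduction of $\K_j$ modulo $q$ is the parabolic with vanishing \emph{upper-right} $j\times(n-j)$ block, not vanishing lower-left block as you claim. Consequently right multiplication by $\K_j$ adds to the first $j$ columns of $G$ arbitrary mod-$q$ combinations of the last $n-j$ columns (take $E\equiv\begin{pmatrix} I&0\\C&I\end{pmatrix}$ with $C$ arbitrary), so the span of the first $j$ columns of $G$ is \emph{not} well defined on the coset $G\K_j$. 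The actual coset invariants are the span $W$ of the last $n-j$ columns of $G$ (this is Lemma 6.2's lattice $\Omega$ read modulo $q$) and, dually, the span of the first $j$ columns of $\,^tG^{-1}$, which equals $W^{\perp}$ --- exactly the reverse of the roles you assign. Since every one of your ``incidence conditions on $V$'' is phrased against a subspace that is not an invariant of the coset, the translation step is ill-posed as written; it can be repaired only by transposing throughout and working with $W$ and $U'=W^{\perp}$, which is what the paper does.

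Second, and more seriously, the step you defer as ``the main obstacle \ldots the real work'' is the actual mathematical content of the lemma, and nothing in your proposal supplies it. The paper does not normalise $N'$ at all; instead it proves, from the symmetry of $M'\,^tN'$, that the kernel of the dual-space map attached to rows $d_1+d_4+d_5+1,\ldots,n-d_7-d_8$ of $N'$ is exactly $V_1'\oplus V_2'\oplus V_3'$, the dual of the flag cut out by $M'$, and then uses coprimality to get $\rank_q(c_1\ \ldots\ c_n)=n-j-d_4-d_8$ for rows $d_1+d_4+d_5+1,\ldots,n-r$ of $N'$. From these two facts the two $N$-conditions collapse to a single containment $W\supseteq W_4$, where $W_4\subseteq\ker\big(V\mapsto V(M')\big)$ is a uniquely determined subspace of dimension $n-j-d_4-d_8$, and the rank condition on the upper rows of $N_5$ becomes automatic. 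Only after this reduction can one count: extending $W_4$ by a $d_8$-dimensional $W_3$ independent of $V_4$ gives $\bbeta(d_7+d_8,d_8)q^{d_8(j-d_1-d_5-d_7)}$ choices, then extending by a $d_4$-dimensional $W_2$ independent of $V_3\oplus V_4$ gives $\bbeta(d_4+d_5,d_4)q^{d_4(j-d_1-d_5)}$ choices, and the two exponents sum to $(d_4+d_8)(j-d_1-d_5)-d_7d_8$. In your write-up these factors are asserted to match the claim rather than derived from any construction. Your proposed normalisation of $N'$ is also flawed on its own terms: the lemma's conditions refer to fixed row blocks (the lower $n-r-d_1-d_4-d_5$ rows of $N_3$, the upper $r-d_7-d_8$ rows of $N_5$) which split the zero rows of $M'$ into two groups, so a left multiplication that is merely block-diagonal for the partition $d_1,\ d_4+d_5,\ n-d',\ d_7+d_8$ can mix those two groups and destroy precisely the conditions being counted.
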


\begin{proof}
Let $V=\F x_1\oplus\cdots\oplus\F x_n$.  We know by Lemma 6.2 that the elements 
$G\in SL_n(\Z)/\K_j$ are in one-to-one correspondence with the subspaces
$W=\F x_{j+1}'\oplus\cdots\oplus\F x_n'$ where
$(x_1'\ \ldots\ x_n')=(x_1\ \ldots\ x_n)\overline G$.  We translate the lemma's
criteria on $G$ to criteria on $W$, and then count such $W$.

Let $V^{\#}=\F y_1\oplus\cdots\oplus\F y_n$ be 
the dual space for $V$; so for $G\in SL_n(\F)$, $(y_1\ \ldots\ y_n)\,^tG^{-1}$
is the basis dual to $(x_1\ \ldots\ x_n)\overline G$.
Let $V(M')$ denote the subspace of $\F^{n,1}$
obtained by replacing each $x_i$ by the $i$th column of $M'$ modulo $q$.
We split $V$ as $V_1\oplus V_2\oplus V_3\oplus V_4$
as follows.  Let $(a_1\ \ldots\ a_n)$ denote (the columns of) the top $d_1+d_4+d_5$ rows of $M'$,
$(g_1\ \ldots\ g_n)$ the top $d_1$ rows of $M'$; set
\begin{align*}
V_4&=\ker\big(V\mapsto V(M')\big),\\
V_3\oplus V_4&=\ker\big(V\mapsto V(a_1\ \ldots\ a_n)\big),\\
V_2\oplus V_3\oplus V_4&=\ker\big(V\mapsto V(g_1\ \ldots\ g_n)\big).
\end{align*}
(So $\dim V_4=n-d'$, $\dim V_3=d_7+d_8$, $\dim V_2=d_4+d_5$, $\dim V_1=d_1$.)
Thus with $W$ determined by $G$ as above, $M'G$ meets the criteria of the lemma if and only if
the map
$V\mapsto V(g_1\ \ldots\ g_n)$ takes $W$ to a dimension 0 subspace,
$V\mapsto V(a_1\ \ldots\ a_n)$ takes $W$ to a dimension $d_4$ subspace,
$V\mapsto V(M')$ takes $W$ to a dimension $d_4+d_8$ subspace.

This splitting $V=V_1\oplus V_2\oplus V_3\oplus V_4$ 
corresponds to a splitting $V^{\#}=V_1'\oplus V_2'\oplus V_3'\oplus V_4'$
where $V_1'$, $V_1'\oplus V_2'$, $V_1'\oplus V_2'\oplus V_3'$ are uniquely determined
(recall that $V_4^{\perp}=V_1'\oplus V_2'\oplus V_3'$, etc.).
Let $(b_1\ \ldots\ b_n)$ be rows $d_1+d_4+d_5+1$ through $n-d_7-d_8$ of $N'$,
$(c_1\ \ldots\ c_n)$ be rows $d_1+d_4+d_5+1$ through $n-r$ of $N'$.

With $U'=W^{\perp}\subseteq V^{\#}$, $N'\,^tG^{-1}$ meets the criteria of the lemma
if and only if the map
$V^{\#}\mapsto V^{\#}(c_1\ \ldots\ c_n)$ takes $U'$ to a dimension 0 subspace,
and $V^{\#}\mapsto V^{\#}(b_1\ \ldots\ b_n)$ takes $U'$ to a dimension
$r-d_7-d_8$ subspace. 

Now we construct and count all dimension $n-j$ subspaces $W$ of $V$ so that the above
criteria for $W$ and $W^{\perp}$ is met.

We know by the symmetry of $M'\,^tN'$ that 
$$V_1'\oplus V_2'\oplus V_3'=\ker\big(V^{\#}\mapsto V^{\#}(b_1\ \ldots\ b_n)\big);$$
so $\ker\big(V^{\#}\mapsto V^{\#}(c_1\ \ldots\ c_n)\big)=V_1'\oplus V_2'\oplus V_3'\oplus U_4',$
with $U_4'\subseteq V_4'$.  We need to choose $W$ so that under the map
$V^{\#}\mapsto V^{\#}(N')$, $U'=W^{\perp}$ is mapped to a dimension 0 subspace.  Equivalently,
we need $U'\subseteq V_1'\oplus V_2'\oplus V_3'\oplus U_4'$, which means $W_4\subseteq W$
where $W_4=(V_1'\oplus V_2'\oplus V_3'\oplus U_4')^{\perp}\subseteq V_4$.
(So we can split $V_4=U_4\oplus W_4$.)  Since $(M',N')=1$, $(b_1\ \ldots\ b_n)\equiv(0\ D)\ (q)$
where $D$ is $(n-d')\times(n-d')$ with $\rank_qD=n-d'.$  So 
$V^{\#}\mapsto V^{\#}(b_1\ \ldots\ b_n)$ automatically takes $U_4'$ to a subspace of dimension
$r-d_7-d_8$.
Since $(M',N')=1$, we also know $\rank_q(c_1\ \ldots\ c_n)=n-j-d_4-d_8$.  Hence
$$\dim\ker\big(V^{\#}\mapsto V^{\#}(c_1\ \ldots\ c_n)\big)=r+d_1+d_4+d_5,$$
so $\dim U_4'=r-d_7-d_8$; thus $\dim W_4=\dim V_4-\dim U_4'=n-j-d_4-d_8.$

We need $\dim W=n-j$, and we need $V\mapsto V(a_1\ \ldots\ a_n)$ to take $W$ to
a dimension $d_4$ subspace.  Thus $W$ must be of the form $W_2\oplus W_3\oplus W_4$
where $W_3\oplus W_4\subseteq\ker\big(V\mapsto V(a_1\ \ldots\ a_n)\big)=V_3\oplus V_4$,
$\dim W_3\oplus W_4=n-j-d_4$, and $W_2$ is independent of $V_3\oplus V_4$.  Since
we need $V\mapsto V(M')$ to take $W$ to a dimension $d_4+d_8$ subspace, we must have
$W_3$ independent of $\ker\big(V\mapsto V(M')\big)=V_4$.  So we extend $W_4$ to
$W_3\oplus W_4$ where $\dim W_3=d_8$ with $W_3$ independent of $V_4$; thus we
have $\bbeta(d_7+d_8,d_8)q^{d_8(j-d_1-d_5-d_7)}$ choices for $W_3$.
Then we extend $W_3\oplus W_4$ to $W_2\oplus W_3\oplus W_4$ where $\dim W_2=d_4$
and $W_2$ is independent of $V_3\oplus V_4$; thus we have $\bbeta(d_4+d_5,d_4)q^{d_4(j-d_1-d_5)}$
choices for $W_2$.
\end{proof}

\begin{lem}\label{Lemma 6.6}  Suppose $\stufe$ is square-free, $\chi$ is a character modulo $\stufe$, and $q$
is a prime dividing $\stufe$.  Set $\F=\Z/q\Z$ and 
$$\sym_q^{\chi}(t)=\sum_{U\in\F^{t,t}_{\sym}}\chi_q(\det U).$$
Then $\sym_q^{\chi}(t)\not=0$ if and only if either (1) $\chi_q=1$, or (2) $\chi_q^2=1$ and $t$ is even.
\end{lem}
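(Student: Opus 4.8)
The plan is to read $\sym_q^{\chi}(t)=\sum_{U\in\F^{t,t}_{\sym}}\chi_q(\det U)$ as a character sum and split according to the order of $\chi_q$, exploiting the two natural actions on symmetric matrices. Throughout, $\chi_q(\det U)=0$ whenever $U$ is singular, so the sum is effectively over invertible $U$; I also take $t\ge 1$, the case $t=0$ being the convention $\sym_q^{\chi}(0)=1$. The first observation is the \emph{congruence action}: for $G\in GL_t(\F)$ the substitution $U\mapsto\,^tG U G$ permutes $\F^{t,t}_{\sym}$ and sends $\det U\mapsto(\det G)^2\det U$, whence $\sym_q^{\chi}(t)=\chi_q^2(\det G)\,\sym_q^{\chi}(t)$. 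As $\det G$ runs over all of $\F^{\times}$, if $\chi_q^2\neq 1$ I may choose $G$ with $\chi_q^2(\det G)\neq 1$ and conclude $\sym_q^{\chi}(t)=0$; this disposes of every character of order $\ge 3$. (The argument is vacuous for $q=2$, where the only character mod $q$ is trivial, placing us in case (1).)

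It remains to treat $\chi_q$ with $\chi_q^2=1$. If $\chi_q=1$ is principal, then $\chi_q(\det U)=1$ for every invertible $U$, so $\sym_q^{\chi}(t)=\sym_q(t)$, the number of invertible symmetric matrices, which is positive (the identity alone contributes); hence $\sym_q^{\chi}(t)\neq 0$, matching (1). So suppose $\chi_q$ is the quadratic character, which forces $q$ odd and $2\in\F^{\times}$. Here I use the \emph{scaling action}: for a non-square $c\in\F^{\times}$ the map $U\mapsto cU$ permutes $\F^{t,t}_{\sym}$ and sends $\det U\mapsto c^t\det U$, giving $\sym_q^{\chi}(t)=\chi_q(c^t)\,\sym_q^{\chi}(t)=(-1)^t\,\sym_q^{\chi}(t)$. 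For $t$ odd this reads $\sym_q^{\chi}(t)=-\sym_q^{\chi}(t)$, so $\sym_q^{\chi}(t)=0$, which is exactly the failure of (2) in odd degree.

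The remaining and genuinely substantive case is $\chi_q$ quadratic with $t$ even, where I must prove $\sym_q^{\chi}(t)\neq 0$. Writing $A$ (resp. $B$) for the number of invertible symmetric matrices with square (resp. non-square) determinant, quadraticity gives $\sym_q^{\chi}(t)=A-B$, so the claim is $A\neq B$. By the classification of nondegenerate symmetric bilinear forms over $\F$ ($q$ odd), the invertible symmetric matrices form exactly two orbits under $U\mapsto\,^tG U G$, distinguished precisely by the square class of the determinant; thus $A=|GL_t(\F)|/|O_t^{+}|$ and $B=|GL_t(\F)|/|O_t^{-}|$, where $O_t^{\pm}$ are the orthogonal groups of the two form-types. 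For $t=2m$ these orders differ (they carry the factors $q^m\mp 1$), so $A\neq B$ and $\sym_q^{\chi}(t)\neq 0$. I expect this even case to be the main obstacle, since it is the one point where symmetry alone is insufficient and one must input an actual inequality between the two orbit sizes.

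As a self-contained alternative to citing orthogonal-group orders, one can prove the even case by induction in steps of two. For $U=\begin{pmatrix} a & \,^tv \\ v & W\end{pmatrix}$ one has $\det U=a\det W-\,^tv\,\mathrm{adj}(W)\,v$; summing over $a$ annihilates the contribution of invertible $W$ (a complete nontrivial character sum over $\F$) and, after identifying $\mathrm{adj}(W)$ for corank-one $W$ with a rank-one form of discriminant $\det W_0$, reduces $\sym_q^{\chi}(t)$ to a nonzero scalar multiple of $\sym_q^{\chi}(t-2)$. Starting from the directly computed values $\sym_q^{\chi}(0)=1$ and $\sym_q^{\chi}(2)=\chi_q(-1)q(q-1)\neq 0$, the induction yields nonvanishing for all even $t$. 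This route avoids quoting group orders, but the corank-one adjugate bookkeeping and the orbit-counting identity $c_s(r,\delta)=\kappa(s,r)\,n_r(\delta)$ (with $\kappa$ independent of the discriminant $\delta$) are precisely the routine-but-delicate computations I would relegate to a calculation; they are the real work hidden behind the even case.
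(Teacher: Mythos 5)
Your proof is correct, and it departs from the paper's in how the vanishing cases are handled while coinciding with it on the one substantive case. The paper runs everything through a single orbit computation: for $q$ odd it writes
$$\sym_q^{\chi}(t)=\frac{\#GL_t(\F)}{q-1}\left(\frac{1}{o(I)}+\frac{\chi_q(\omega)}{o(J)}\right)\sum_{\alpha\in\F^{\times}}\chi_q^2(\alpha),$$
where $J=\begin{pmatrix}\omega\\&I_{t-1}\end{pmatrix}$ with $\omega$ a non-square and $o(\cdot)$ denotes the order of the congruence stabilizer; vanishing for $\chi_q^2\neq1$ then falls out of $\sum_{\alpha}\chi_q^2(\alpha)=0$, vanishing for quadratic $\chi_q$ with $t$ odd falls out of $o(I)=o(J)$ (cited to Gerstein), and nonvanishing in the remaining cases from $o(I)\neq o(J)$ for $t$ even. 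You instead dispatch the two vanishing cases by equivariance alone: the congruence twist $U\mapsto\,^tGUG$ kills any $\chi_q$ of order $\ge 3$, and the scaling $U\mapsto cU$ by a non-square kills the quadratic case in odd degree; both arguments are more elementary than the paper's, needing no orbit classification or stabilizer orders at all. For the genuinely hard case (quadratic $\chi_q$, $t$ even) your argument is essentially the paper's: two congruence orbits distinguished by the square class of the determinant, orbit sizes $\#GL_t(\F)/|O_t^{\pm}|$ by orbit--stabilizer, and the inequality of the two orthogonal group orders (your factors $q^m\mp1$ are exactly the content of the paper's appeal to Gerstein that $o(I)\neq o(J)$ for even $t$). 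The trade-off: the paper's single formula treats all cases uniformly, whereas your version isolates the one place where an actual inequality of group orders is needed and makes the other three cases transparent. Your inductive alternative for the even case is only a sketch (the corank-one adjugate step is not carried out), but nothing rests on it since your primary orbit argument is complete; your base computation $\sym_q^{\chi}(2)=\chi_q(-1)q(q-1)$ is correct in any case.
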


\begin{proof}  Say $q=2$.  Then $\chi_q=1$ (since $\stufe$ is square-free), so $\sym_q^{\chi}(t)$ is
the number of invertible, symmetric $t\times t$ matrices modulo $2$; clearly this is non-zero.

So suppose $q$ is odd.  Set $J=\begin{pmatrix} \omega\\&I_{t-1}\end{pmatrix}$ where $\omega$ is not
a square in $\F$.  
We know $GL_t(\F)$ acts by conjugation on the subset of invertible elements of $\F^{t,t}_{\sym}$; the orbits
are represented by $I$ and $J$.  
Note that for $U\in F^{t,t}_{\sym}$, $U$ is in the orbit of $I$ (resp. the orbit of $J$) if and only if, for some $\alpha\in\F^{\times}$,
we have
$\det U=\alpha^2$ (resp. $\det U=\alpha^2\omega$); 
also, given $\alpha\in\F^{\times}$, the
number of $U\in\F^{t,t}_{\sym}$ with $\det U=\alpha^2$ (resp. with $\det U=\alpha^2\omega$) is the number of
$U\in\F^{t,t}_{\sym}$ with $\det U=1$ (resp. with $\det U=\omega$).  
With $o(U)$ the order of the stabilizer of $U$, we know
$o(U)=o(U')$ when $U,U'$ are in the same orbit.
Hence
\begin{align*}
\sym_q^{\chi}(t)&=\frac{2}{q-1}\cdot\frac{\#GL_t(\F)}{o(I)}\cdot\frac{1}{2} \sum_{\alpha\in\F^{\times}}\chi_q(\alpha^2)\\
&\quad +
\frac{2}{q-1}\cdot\frac{\#GL_t(\F)}{o(J)}\cdot\frac{1}{2} \sum_{\alpha\in\F^{\times}}\chi_q(\alpha^2\omega)\\
&=\frac{\#GL_t(\F)}{q-1}\left(\frac{1}{o(I)}+\frac{\chi_q(\omega)}{o(J)}\right)\sum_{\alpha\in\F^{\times}}\chi_q^2(\alpha).
\end{align*}
Thus $\sym_q^{\chi}(t)=0$  if $\chi_q^2\not=1$.  Suppose $\chi_q^2=1$; then $\chi_q(\omega)=-1$ if and only if
$\chi_q\not=1$.  Also, by the theory of quadratic forms over finite fields (see, for instance, \cite{Ger}), we know
$o(I)=o(J)$ if and only if $t$ is odd, so the lemma follows.
\end{proof}

\begin{lem}\label{Lemma 6.7}
For $p$ prime, $t\in\Z_+$, we have $\sum_{\ell=0}^t \bbeta_p(t,\ell)\sym_p(\ell)=p^{t(t+1)/2}.$
\end{lem}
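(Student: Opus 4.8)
The plan is to read both sides combinatorially. The right-hand side $p^{t(t+1)/2}$ is precisely the total number of symmetric matrices in $\F^{t,t}_{\sym}$, since such a matrix is determined freely by its $t(t+1)/2$ entries on and above the diagonal. So I would prove the sharper statement that, for each $\ell$ with $0\le\ell\le t$, the number of symmetric matrices $S\in\F^{t,t}_{\sym}$ with $\rank_p S=\ell$ equals $\bbeta_p(t,\ell)\,\sym_p(\ell)$; summing this over $\ell$ and using that every symmetric matrix has some rank between $0$ and $t$ then yields the lemma.

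To establish the rank count, I would stratify the rank-$\ell$ symmetric matrices according to their column space. Since $S={}^tS$, the column space $C=\operatorname{im}S$ is an $\ell$-dimensional subspace of $\F^t$, and there are exactly $\bbeta_p(t,\ell)$ candidate subspaces. The crux is to show that the fiber over each fixed $C$ --- i.e.\ the set of symmetric $S$ with $\operatorname{im}S=C$ --- has cardinality $\sym_p(\ell)$, independent of $C$.

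For the independence I would use the congruence action $S\mapsto PS\,{}^tP$ for $P\in GL_t(\F)$: it preserves $\F^{t,t}_{\sym}$ and $\rank_p$, and sends $\operatorname{im}S$ to $P\cdot\operatorname{im}S$, so it permutes the fibers. As $GL_t(\F)$ acts transitively on the $\ell$-dimensional subspaces of $\F^t$, all fibers are equinumerous, and it suffices to count those $S$ whose column space is the fixed coordinate subspace $C_0=\spn(x_1,\dots,x_\ell)$. The condition $\operatorname{im}S\subseteq C_0$ forces the last $t-\ell$ rows of $S$ to vanish, hence by symmetry also the last $t-\ell$ columns, so $S=\begin{pmatrix} A&0\\0&0\end{pmatrix}$ with $A\in\F^{\ell,\ell}_{\sym}$; and $\operatorname{im}S=C_0$ holds exactly when $\operatorname{im}A=\F^\ell$, that is, when $A$ is invertible. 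Thus the fiber over $C_0$ is in bijection with the invertible symmetric $\ell\times\ell$ matrices, of which there are $\sym_p(\ell)$ by definition.

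I expect the only point needing care to be this fiber computation together with the transitivity bookkeeping; the remaining steps are direct counts. Note that the whole argument is characteristic-free (it only uses the standard nondegenerate pairing on $\F^t$ and transitivity of $GL_t(\F)$ on subspaces), so it covers $p=2$ with no modification.
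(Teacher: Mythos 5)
Your proof is correct and takes essentially the same approach as the paper: the paper partitions the rank-$\ell$ symmetric matrices according to their radical (kernel), a $(t-\ell)$-dimensional subspace occurring in $\bbeta_p(t,t-\ell)=\bbeta_p(t,\ell)$ ways, while you partition according to the column space; since for a symmetric matrix these two subspaces determine one another, the stratifications agree, and both proofs identify each fiber with the invertible matrices in $\F^{\ell,\ell}_{\sym}$, counted by $\sym_p(\ell)$. Your group-action argument for equinumerosity of the fibers plays the same role as the paper's fixed change-of-basis matrices $G_R$, so there is nothing to add.
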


\begin{proof} Let $\F=\Z/p\Z$; take $V=\F x_1\oplus\cdots\oplus\F x_t$.  For each $t-\ell$-dimensional subspace $R$ of $V$,
fix $G_R\in GL_t(\F)$ so that $R=\F y_{\ell+1}\oplus\cdots\oplus\F y_t$ where $(y_1\ \cdots\ y_t)=(x_1\ \cdots\ x_t)G_R.$
Take $Q\in \F^{t,t}_{\sym}$ so that $\rank Q=\ell$.  Let $(V,Q)$ denote the quadratic space with $Q$ the quadratic form
on $V$ relative to the basis $(x_1\ \cdots\ x_t)$.  
By the uniqueness of the radical of $V$ (with respect to $Q$), there exists a unique $R$ so that
$^tG_RQG_R=\begin{pmatrix}U&0\\0&0\end{pmatrix}$ where $U\in\F^{\ell,\ell}_{\sym}$ with $U$ invertible, and there
are $\sym_p(\ell)$ possibilities for $U$ (depending on $Q$). 
Hence $\F^{t,t}_{\sym}$ is partitioned into sets
$\{Q:\ \rank Q=\ell\ \},$ $0\le \ell\le t$, and given $\ell$,
$\{Q:\ \rank Q=\ell\ \}$ is partitioned into sets
$\{Q:\ ^tG_RQG_R=\begin{pmatrix} U&0\\0&0\end{pmatrix}\ \}$,
$R$ varying over dimension $t-\ell$ subspaces of $V$, of which there are $\bbeta_p(t,t-\ell)=\bbeta_p(t,\ell)$,
$U$ varying over invertible elements on $\F^{\ell,\ell}_{\sym}$, of which there are $\sym_p(\ell).$
From this the lemma follows.
\end{proof}

\end{document}